\pgfplotsset{compat=1.16}
\DeclareSymbolFontAlphabet{\mathbb}{AMSb}
\DeclareSymbolFontAlphabet{\mathbbl}{bbold}
\def\R{\mathbb R}
\def\D{\mathcal{D}}
\def\XD{X_{\D}}
\def\PiDm{\Pi^m_{\D}}
\def\PiDf{\Pi^f_{\D}}
\def\PiDbullet{\Pi^{\bullet}_{\D}}
\def\gradDm{\nabla^m_{\D}}
\def\gradDf{\nabla^f_{\D}}
\def\<{\langle}
\def\>{\rangle}
\def\Chi{\raise .3ex \hbox{\large $\chi$}}
\def\ol#1{\overline{#1}}
\def\({\Bigl (}
\def\){\Bigr )}
\def\dsp{\displaystyle}
\def\x{{\bf x}}
\def\n{{\bf n}}
\def\U{{\bf U}}
\def\q{{\bf q}}
\def\aa{{\mathfrak a}}
\def\M{\mathbf{M}}
\def\L{\boldsymbol{\Lambda}}
\def\l{\boldsymbol{\lambda}}
\def\m{\boldsymbol{\mu}}
\def\cells{\mathcal{M}}
\def\faces{\mathcal{F}}
\def\edges{\mathcal{E}}
\def\Ens{\mathfrak{E}_s}
\def\Tref{{T}_{\rm ref}}
\def\pref{{p}_{\rm ref}}
\def\rhoref{{\varrho}_{\rm ref}}
\def\x{{\bf x}}
\def\dsp{{\displaystyle x}}
\def\d{{\rm d}}
\def\div{{\rm div}}
\def\bq{\mathbf{q}}
\def\K{\mathbb{K}}
\def\n{\mathbf{n}}
\def\dsp{\displaystyle}
\def\bu{\mathbf{u}}
\def\bv{\mathbf{v}}
\def\bV{\mathbf{V}}
\def\Id{\mathbb{I}}
\def\bF{\mathbf{F}}
\def\df{\mathrm{d}_f}
\def\dfsig{\mathrm{d}_{f,\sigma}}
\def\dfD{\mathrm{d}_{f,\D}}
\def\dzero{\mathrm{d}_0}
\def\bbsig{\bbsigma}
\def\bbeps{\bbespilon}
\newcommand{\jump}[1]{\llbracket #1 \rrbracket}
\newcommand{\dtn}{{\Delta t^n}}
\newcommand{\deln}{\delta_t^{n}}
\newcommand{\delnbullet}{\delta_{t,\bullet}^{n}}
\newcommand{\delnm}{\delta_{t,m}^{n}}
\newcommand{\delnf}{\delta_{t,f}^{n}}
\newcommand{\email}[1]{\href{mailto:#1}{#1}}
\theoremstyle:=definition,remark,plain\do{%
        \expandafter\g@addto@macro\csname th@\theoremstyle\endcsname{%
            \addtolength\thm@preskip\parskip
            }%
        }
\newtheorem{theorem}{Theorem}
\numberwithin{theorem}{section}
\newtheorem{proposition}[theorem]{Proposition}
\newtheorem{lemma}[theorem]{Lemma}
\theoremstyle{remark}
\theoremstyle{definition}
\def\trace{\gamma}
\def\del{\partial}
\def\div{{\rm div}}
\definecolor{labelkey}{rgb}{0.6,0,1}
 \newcounter{corr}
 \definecolor{violet}{rgb}{0.580,0.,0.827}
 \newcommand{\corr}[3]{\typeout{Warning : a correction remains in page
 \thepage}
 				\stepcounter{corr}        
 				{\color{blue}\ifmmode\text{\,\sout{\ensuremath{#1}}\,}\else\sout{#1}\fi}
         {\color{red}#2}
         {\color{violet} \ifmmode\text{#3}\else #3\fi }}
\newcommand{\logLogSlopeTriangle}[5]
{

    \pgfplotsextra
    {
        \pgfkeysgetvalue{/pgfplots/xmin}{\xmin}
        \pgfkeysgetvalue{/pgfplots/xmax}{\xmax}
        \pgfkeysgetvalue{/pgfplots/ymin}{\ymin}
        \pgfkeysgetvalue{/pgfplots/ymax}{\ymax}

        \pgfmathsetmacro{\xArel}{#1}
        \pgfmathsetmacro{\yArel}{#3}
        \pgfmathsetmacro{\xBrel}{#1-#2}
        \pgfmathsetmacro{\yBrel}{\yArel}
        \pgfmathsetmacro{\xCrel}{\xArel}

        \pgfmathsetmacro{\lnxB}{\xmin*(1-(#1-#2))+\xmax*(#1-#2)} 
        \pgfmathsetmacro{\lnxA}{\xmin*(1-#1)+\xmax*#1} 
        \pgfmathsetmacro{\lnyA}{\ymin*(1-#3)+\ymax*#3} 
        \pgfmathsetmacro{\lnyC}{\lnyA+#4*(\lnxA-\lnxB)}
        \pgfmathsetmacro{\yCrel}{\lnyC-\ymin)/(\ymax-\ymin)} 

        \coordinate (A) at (rel axis cs:\xArel,\yArel);
        \coordinate (B) at (rel axis cs:\xBrel,\yBrel);
        \coordinate (C) at (rel axis cs:\xCrel,\yCrel);

     \draw[#5]   (A)-- 
                       (B)-- 
                       (C)-- node[pos=0,anchor=west] {#4}
                       cycle;

    }
    
}
\def\thm@space@setup{%
  \thm@preskip=\parskip \thm@postskip=0pt
}
\begin{document}

\usetikzlibrary{calc}

\title{Discretisations of mixed-dimensional Thermo-Hydro-Mechanical models preserving energy estimates}
\author[1,3]{{J\'er\^ome Droniou}\footnote{\email{jerome.droniou@umontpellier.fr}}}
\author[2]{{Mohamed Laaziri}\footnote{\email{mohamed.laaziri@univ-cotedazur.fr}}}
\author[2]{{Roland Masson}\footnote{\email{roland.masson@univ-cotedazur.fr}}}
\affil[1]{IMAG, Universit\'e de Montpellier, CNRS, Montpellier, France}%
\affil[2]{Universit\'e C\^ote d'Azur, Inria, CNRS, Laboratoire J.A. Dieudonn\'e, team Coffee, Nice, France}%
\affil[3]{School of Mathematics, Monash University, Victoria 3800, Australia}%

\date{}
\maketitle

\begin{abstract}
  In this study, we explore mixed-dimensional Thermo-Hydro-Mechanical (THM) models in fractured porous media accounting for Coulomb frictional contact at matrix fracture interfaces. The simulation of such models plays an important role in many applications such as hydraulic stimulation in deep geothermal systems and assessing induced seismic risks in CO$_2$ storage.
We first extend to the mixed-dimensional framework the thermodynamically consistent THM models derived in \cite{coussy} based on first and second principles of thermodynamics. Two formulations of the energy equation will be considered  based either on energy conservation or on the entropy balance, assuming a vanishing thermo-poro-elastic dissipation. 
Our focus is on space time discretisations preserving energy estimates for both types of formulations and for a general single phase fluid thermodynamical model. This is achieved by a Finite Volume discretisation of the non-isothermal flow based on coercive fluxes and a tailored discretisation of the non-conservative convective terms. It is combined with a mixed Finite Element formulation of the contact-mechanical model with face-wise constant Lagrange multipliers accounting for the surface tractions, which preserves the dissipative properties of the contact terms. 
The discretisations of both THM formulations are investigated and compared in terms of convergence, accuracy and robustness on 2D test cases. It includes a Discrete Fracture Matrix model with a convection dominated thermal regime, and either a weakly compressible liquid or a highly compressible gas thermodynamical model. 
\noindent
~
\bigskip \\
\textbf{Keywords:} Thermo-Hydro-Mechanical (THM) model, Mixed-dimensional model, Discrete Fracture Matrix model, Contact-mechanics, Thermodynamically consistent discretisation, Finite Volume, Mixed Finite Element formulation. 
\end{abstract}
%
\section{Introduction}
Thermo-Hydro-Mechanical (THM) models in fractured/faulted porous rocks play an important role in addressing the challenges of a sustainable exploitation of subsurface resources. This is for example typically the case of deep geothermal energy production and CO$_2 $ geological storage. THM models offer valuable insights into the complex interactions between temperature changes, fluid flow, rock deformation and fracture/fault mechanical behavior within the subsurface. They play a pivotal role in assessing potential risks, informing mitigation strategies and managing hydraulic stimulation of geothermal systems. Similarly, in CO$_2$ sequestration projects, THM models are instrumental in predicting and preventing issues like fault reactivation which can  potentially induce CO$_2$ leakage or seismicity.

The THM models considered in this work initially integrate a mixed-dimensional approach, coupling a non-isothermal Poiseuille flow within the network of fractures/faults represented as co-dimension one surfaces, to the non-isothermal Darcy flow in the surrounding porous rock, known as the matrix. Let us refer to \cite{MAE02,FNFM03,KDA04,MJE05,BGGLM16,BHMS2016,FLEMISCH2018239,NBFK2019,BERRE2021103759} and the references there-in for the derivation and for various discretisations of such mixed-dimensional models in the context of single phase Darcy flows. 
The second key component is the thermo-poro-mechanical model coupling the deformation of the rock with the non-isothermal Darcy flow within the matrix domain. Let us refer to the monograph \cite{coussy} as a key reference textbook on a thermodynamically consistent derivation of such models.  In this work, we will follow this framework further assuming small strains and porosity variations as well as a linear thermo-poro-elastic behavior of the porous rock. 
The third ingredient is related to the mechanical behavior of the fractures/faults typically based on contact-mechanics taking into account frictional contact at matrix-fracture interfaces.   
This type of mixed-dimensional poromechanical models have been the object of many recent works both in the isothermal case \cite{NEJATI2016123,GKT16,contact-norvegiens,tchelepi-castelletto-2020,GDM-poromeca-cont,GDM-poromeca-disc,BDMP:21,BoonNordbotten22} and in the non-isothermal case \cite{SALIMZADEH2018212,GARIPOV2019104075,STEFANSSON2021114122}. 

Building upon existing research, our work focuses on discretisations of mixed-dimensional THM models preserving energy estimates for a general single phase fluid thermodynamical model, an aspect not thoroughly explored in previous studies. We first extend to the mixed-dimensional framework the thermodynamically consistent THM models derived in \cite{coussy} based on first and second principles of thermodynamics. Two formulations of the energy equation will be considered  based either on energy conservation or on the entropy balance, assuming a vanishing thermo-poro-elastic dissipation.  The entropy balance formulation is frequently combined with a small Darcy velocity assumption and a linearisation of the Fourier term based on a small temperature variation assumption \cite{BRUN20201964,both2019gradient}. It leads to an approximate entropy equation which will be investigated in this work both theoretically and numerically from the points of view of energy estimates and accuracy of the solution. One difficulty that needs to be dealt with when starting from such entropy balance equation is the ability to return back to the energy equation at the discrete level. This will be addressed in this work by preserving at the discrete level the links between the energy conservation and entropy balance equations.

To achieve these goals, the space and time discretisations of the non-isothermal flow and of the contact-mechanics must be selected carefully. Regarding the flow, our framework is based on coercive fluxes and incorporate a possible upwind approximation of the convection terms in order to deal with convection dominated regimes. The discretisation of the non-conservative convective terms in the entropy balance formulation is designed in order to preserve the link with the energy formulation. Although, this setting accounts for a large class of coercive Finite Volume schemes, we focus in the following on the Hybrid Finite Volume (HFV) discretisation \cite{droniou2010unified,BGGLM16} in order to simplify the presentation. The time integration scheme is based on a forward Euler discretisation including a semi-implicit variation in the entropy balance approach.  
Regarding the discretisation of the mechanical model, it will be based, for the sake of simplicity, on a conforming Finite Element approximation of the displacement field accounting for its discontinuity at matrix fracture interfaces. The discretisation of the frictional contact along the fracture network is a key feature both for the robustness of the scheme and for the derivation of energy estimates.  
Different formulations such as mixed or stabilized mixed formulations \cite{haslinger-96,Wohlmuth11,Lleras-2009}, augmented Lagrangian \cite{BHL2023} and Nitsche methods \cite{Chouly2017,CHLR2020,beaude2023mixed} have been developed to discretise Coulomb frictional contact. 
In line with \cite{Hild17,tchelepi-castelletto-2020,BDMP:21}, a mixed formulation with face-wise constant Lagrange multipliers is selected. This choice maintains at the discrete level the dissipative properties of the contact terms  and the lower bound on the fracture aperture. It enables the straightforward handling of complex fracture networks, including corners, tips, and intersections, and lead to local expressions of the contact equations, making possible the use of efficient semi-smooth Newton nonlinear solvers. 

The rest of this article is organised as follows. Section \ref{sec:thmmodel} presents the mixed-dimensional THM models with energy equations based either on an energy conservation formulation or on an approximate entropy balance formulation. The energy estimates satisfied by both models are formally derived in  Section \ref{sec:energyestimates}. Their discretisations are described in Section \ref{sec:discretisation} including the Finite Volume discretisations of the non-isothermal flow in Section \ref{sec:discretemassenergy} and the mixed formulation of the contact-mechanics in Section \ref{sec:discretecontactmechanics} . The discrete energy estimates are derived in Section \ref{sec:discreteenergyestimates}  for the discretisations of both models which are compared numerically  in Section \ref{sec:numerics} in terms of convergence, accuracy and robustness. We first consider in Section \ref{sec:test1}  a manufactured solution for an incompressible fluid and different Peclet numbers on a 2D square domain without fractures. Then, a 2D Discrete Fracture Matrix (DFM) model  with a six fracture network is investigated in Section \ref{sec:testdfm} both for the case of a weakly compressible liquid and for the case of a highly compressible perfect gas.

\section{Mixed-dimensional Thermo-Hydro-Mechanical models}\label{sec:thmmodel}

\subsection{Mixed-dimensional geometry and function spaces}

In what follows, scalar fields are represented by lightface letters, vector fields by boldface letters.
We let $\Omega\subset\R^d$, $d\in\{2,3\}$, denote a bounded polytopal domain, partitioned
into a fracture domain $\Gamma$ and a matrix domain $\Omega\backslash\overline\Gamma$.
The network of fractures is defined by 
$$
\overline \Gamma = \bigcup_{i\in I} \overline \Gamma_i,
$$  
where each fracture $\Gamma_i\subset \Omega$, $i\in I$, is a planar and simply connected polygonal domain, which is relatively open in the hyperplane it spans. Without restriction of generality, we will assume that the fractures may only intersect at their boundaries (Figure \ref{fig:network}), that is, for any $i,j \in I, i\neq j$ it holds $\Gamma_i\cap \Gamma_j = \emptyset$, but not necessarily $\overline{\Gamma}_i\cap \overline{\Gamma}_j = \emptyset$.

\begin{figure}[h!]
\begin{center}
\includegraphics[scale=.55]{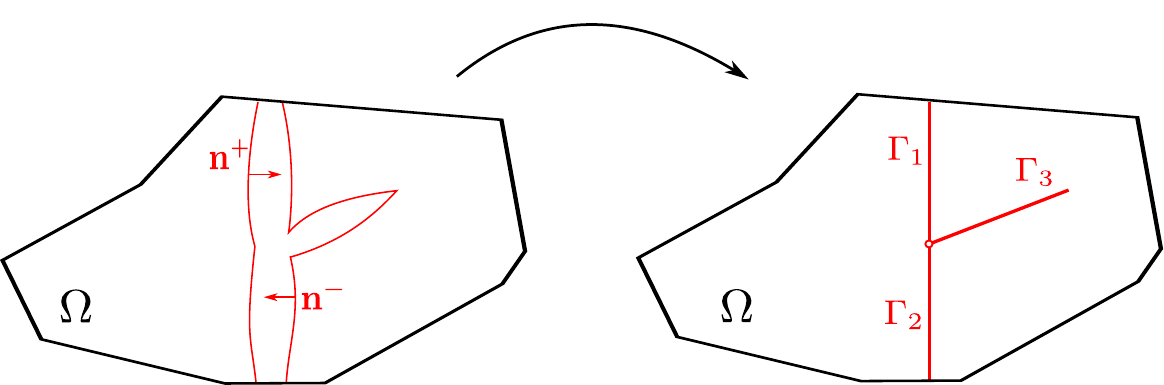}   
\caption{Illustration of the dimension reduction in the fracture aperture for a 2D domain $\Omega$ with three intersecting fractures $\Gamma_i$, $i\in\{1,2,3\}$, with the equi-dimensional geometry on the left and the mixed-dimensional geometry on the right.}
\label{fig:network}
\end{center}
\end{figure}

The two sides of a given fracture of $\Gamma$ are denoted by $\pm$ in the matrix domain, with unit normal vectors $\n^\pm$ oriented outward from the sides $\pm$. We denote by $\gamma_\aa$ the trace operators on the side $\aa \in \{+,-\}$ of $\Gamma$ for functions in $H^1(\Omega{\setminus}\overline\Gamma)$ and by $\gamma_{\del\Omega}$ the trace operator for the same functions on $\del\Omega$. The jump operator on $\Gamma$ for functions $\bu$ in  $(H^1(\Omega\backslash\overline\Gamma))^d$ is defined by
$$
\jump{\bu} = {\gamma_+ \bu - \gamma_- \bu}, 
$$
and we denote by
$$
\jump{\bu}_\n = \jump{\bu}\cdot\n^+ \quad \mbox{ and } \quad \jump{\bu}_\tau = \jump{\bu} -\jump{\bu}_\n \n^+
$$
its normal and tangential components. The notation $\jump{\q_m}_\n$ will also be used to denote the normal jump of  flux functions $\q_m \in H_{\div}(\Omega\setminus\overline\Gamma)$ defined by
$$
\jump{\q_m}_\n = \gamma_\n^+ \q_m + \gamma_\n^- \q_m
$$
with $\gamma_\n^\aa$ the normal trace operator on the side $\aa$ of $\Gamma$ oriented outward to the side $\aa \in \{+,-\}$. The notation $\gamma_\n^\aa$ will also be applied to tensor fields. 
The tangential gradient and divergence along the fractures are respectively denoted by $\nabla_\tau$ and $\div_\tau$. The symmetric gradient operator $\bbeps$ is defined such that $\bbeps(\bv) = {1\over 2} (\nabla \bv +(\nabla \bv)^t)$ for a given vector field $\bv\in H^1(\Omega\backslash\overline\Gamma)^d$.

To fix ideas, 
the pressure $p$ and temperature $T$ are assumed to be continuous at the matrix--fracture interface, that is, these functions belong to $H^1(\Omega)$. As a consequence, letting $\gamma:H^1(\Omega)\to L^2(\Gamma)$ be the trace operator on the fracture, the fracture pressure and temperature are given by $p_f=\gamma p$ and $T_f=\gamma T$.

The space for the displacement field is defined by 
\begin{equation*}
\U_0 =\{ \bv \in H^1(\Omega\backslash\overline\Gamma)^d : \trace_{\del\Omega} \bv = 0\}. 
\end{equation*}

\subsection{Mixed-dimensional models}\label{sec:mixeddim} 

We consider a Thermo-Hydro-Mechanical (THM) model under the hypothesis of small perturbations for the skeleton accounting for small strain, displacement and variations of porosity \cite{coussy}. Linear isotropic thermo-poro-elastic constitutive laws are considered for the skeleton assuming small variations of temperature around the reference temperature $\Tref$. The Darcy law is used for the fluid velocity and the Fourier law for the thermal conduction. Thermal equilibrium is assumed between the fluid and the skeleton, 
and the mechanical inertial term is modelled using the frozen specific average fluid-rock density $m_0$.

The fluid thermodynamical properties, depending on its pressure $p$ and temperature $T$, are
\begin{itemize}
\item $\varrho(p,T)$: specific density,
\item $e(p,T)$: specific internal energy,
\item $h(p,T) = e(p,T) + {p\over \varrho(p,T)}$: specific enthalpy, and
\item $\eta(p,T)$: dynamic viscosity.
\end{itemize}
For a fluid property $\Xi = \varrho, e, h, \eta$, we use the short notations  $\Xi_m = \Xi(p,T)$ in the matrix and $\Xi_f = \Xi (p_f, T_f) = \gamma \Xi_m$ along the fracture network.  

The primary unknowns of the model are the fluid pressure $p$, the fluid temperature $T$ and the skeleton displacement field $\bu$. They are solutions of the nonlinear system of PDEs coupling the mixed-dimensional fluid mass conservation equation, the mixed-dimensional total energy conservation equation, the skeleton momentum balance equation, and the frictional contact conditions at matrix fracture interfaces. The model is detailed below starting with the equations in the matrix followed by the equations along the fracture network. Two different formulations of the energy equation will be considered.

\subsubsection{Matrix model}

In the matrix, the model accounts for the mass and energy conservation equations coupled to the skeleton momentum balance equation. Letting $t_F$ be the final simulation time, we therefore consider
\begin{subequations}
\label{eq:matrix}
\begin{alignat}{2}
\partial_t (\varrho_m \phi)+\div(\varrho_m \bV_m)={}& G_m &&\quad\mbox{ in $(0,t_F)\times\Omega$},\label{eq:mass.matrix}\\
T \partial_t  S_s + p \partial_t \phi + \partial_t (\varrho_m\phi e_m) + \div(\varrho_m h_m\bV_m + \q_m ) ={}& H_m &&\quad\mbox{ in $(0,t_F)\times\Omega$},\label{eq:energy.h.matrix}\\
m_0 \partial_t^2 \bu -\div\bbsig ={}& \bF  &&\quad\mbox{ in $(0,t_F)\times\Omega$},\label{eq:meca}
\end{alignat}
with the Darcy and Fourier laws governing respectively the matrix fluid velocity and the matrix conductive thermal flux:  
\begin{alignat}{2}
&\bV_m=-\frac{\K(\phi)}{\eta_m} \nabla p,  \quad \q_m = -\Lambda_m(\phi)\nabla T, \label{eq:DarcyFourier.matrix}
\end{alignat}
\end{subequations}
where $\K$ and $\Lambda_m$ are respectively the rock permeability and the fluid rock average thermal conductivity, both possibly depending on the matrix porosity $\phi$. 
The energy equation \eqref{eq:energy.h.matrix} has been obtained under the hypothesis of reversible mechanical deformation in the sense of zero thermo-poro-mechanical dissipation:
\begin{equation}
T \partial_t S_s + p \partial_t \phi + \bbsig : \partial_t \bbeps(\bu) - \partial_t E_s = 0. \label{eq:revdef}
\end{equation}
Assuming a linear isotropic thermo-poro-elastic behavior of the skeleton, this gives the following constitutive laws
\begin{subequations}
\label{eq:laws}
\begin{alignat}{2}
\partial_t \phi ={}& b ~\div \partial_t\bu - \alpha_\phi~\partial_t T + \frac1N\partial_t p\,,\label{eq:dtphi}\\
\partial_t S_s={}& \alpha_s K_s ~\div\partial_t\bu - \alpha_\phi ~\partial_t p +\frac{C_s}{\Tref}\partial_tT \label{eq:dtSs},\\
\bbsig ={}& \bbsig^e(\bu) - b ~p \Id - \alpha_s K_s (T-\Tref) \Id \label{eq:sigmaT},\\
\bbsig^e(\bu) ={}&  2 \mu  ~\bbeps(\bu)+ \lambda ~\div \bu ~\Id \label{eq:sigmae},
\end{alignat}
\end{subequations}
which derive from the following volumetric skeleton internal energy  \cite{coussy}: 
\begin{equation}
E_s = \mu |\bbeps(\bu)|^2 + {\lambda \over 2} (\div(\bu))^2  +  \frac12 {} \begin{bmatrix} p & T \end{bmatrix} M \begin{bmatrix} p\\T \end{bmatrix} + \alpha_s K_s \Tref ~\div\bu, \label{eq:rockenergy}
\end{equation}
where the matrix $M$, defined below, is assumed to be definite positive:
\begin{equation*}\label{eq:M.sdp}
  M:= \begin{bmatrix} \frac{1}{N} & -\alpha_\phi \\-\alpha_\phi & \frac{C_s}{\Tref} \end{bmatrix}.
\end{equation*}

In the equations above, $S_s$ is the volumetric skeleton entropy, $\bbsig$ the total stress tensor and $\bbsig^e$ the effective stress tensor. 
The parameters $\mu$ and $\lambda$ are the effective Lame coefficients, 
$N$ is the Biot modulus, $b$ the Biot coefficient, 
$K_s$ is the bulk modulus, $\alpha_s$ is the volumetric skeleton thermal dilation coefficient,
$\alpha_\phi$ is the volumetric thermal dilation coefficient related to the porosity, and 
$C_s$ is the skeleton volumetric heat capacity. 

Considering the combination ${1\over T}\times$ \eqref{eq:energy.h.matrix} $- {h_m\over T}\times$ \eqref{eq:mass.matrix} leads to the following alternative formulation of the energy equation
\begin{equation}\label{eq:entro.matrix}
\partial_t S_s+
\frac{\varrho_m \phi}{T}\partial_t e_m + \frac{p\varrho_m}{T}{} \phi \partial_t \frac{1}{\varrho_m}
  + \frac{1}{T}\varrho_m\bV_m\cdot\nabla h_m + \frac{1}{T}\div\q_m= {H_{m} \over T} - {h_m \over T} G_{m}. 
\end{equation}
An approximation of \eqref{eq:entro.matrix} is obtained based on the classical assumptions of small Darcy velocity $\bV_m$  and small variations of temperature around $\Tref$. Then, using the approximation  
\begin{align*}
  \varrho_m\bV_m\cdot\nabla h_m  &=  \varrho_m\bV_m\cdot\nabla e_m + p \varrho_m\bV_m\cdot\nabla {1 \over \varrho_m} + \bV_m\cdot\nabla p \\
  & \sim \varrho_m\bV_m\cdot\nabla e_m + p \varrho_m\bV_m\cdot\nabla {1 \over \varrho_m}, 
\end{align*}
and the linearisation ${1 \over T} \div \q_m \sim {1 \over \Tref} \div \q_m$, 
we obtain the following approximate equation: 

\begin{equation}\label{eq:entro.app.matrix}
\begin{aligned}
\partial_t S_s+
\frac{\varrho_m \phi}{T}\partial_t e_m + \frac{p\varrho_m}{T}{}&\phi \partial_t \frac{1}{\varrho_m}
  + \frac{1}{T}\varrho_m\bV_m\cdot\nabla e_m\\
  & + \frac{p}{T}\varrho_m\bV_m\cdot\nabla {1 \over \varrho_m} + \frac{1}{\Tref}\div\q_m= {H_{m} \over T} - {h_m \over T} G_{m}. 
\end{aligned}
\end{equation}
Note that, using $Tds = de + p d{1 \over \varrho}$ with $s$ the fluid specific entropy, equation \eqref{eq:entro.app.matrix} becomes 
$$
\begin{aligned}
\partial_t S_s+ \varrho_m \phi \partial_t s_m 
  + \varrho_m\bV_m\cdot\nabla s_m  + \frac{1}{\Tref}\div\q_m= {H_{m} \over T} - {h_m \over T} G_{m}.
\end{aligned}
$$
This equation is a classical approximate non conservative formulation of the entropy equation \cite{coussy}, which motivates the terminology adopted in the following of \emph{approximate entropy equation} for \eqref{eq:entro.app.matrix} and of \emph{entropy equation} for \eqref{eq:entro.matrix}.
\subsubsection{Fracture model} 

The mass and energy conservation equations of the reduced fracture model are obtained by integration along the fracture width of the equi-dimensional equations taking into account the mass and energy normal flux continuity at matrix fracture interfaces. This process leads to
\begin{subequations}
\label{eq:fracture}
\begin{equation}
\partial_t (\varrho_f \df)+\div_\tau(\varrho_f  \bV_f)-\jump{\varrho_m\bV_m}_\n = G_{f}\quad\mbox{ in $(0,t_F)\times\Gamma$},\label{eq:mass.fracture}
\end{equation}
\begin{equation}
\begin{aligned}
p_f \partial_t d_f + \partial_t (\varrho_f \df e_f) + \div_\tau(\varrho_f h_f\bV_f + \q_f) - \jump{\varrho_m h_m\bV_m + \q_m}_\n
&= H_{f}\quad\mbox{ in $(0,t_F)\times\Omega$},
\end{aligned}
\label{eq:energy.h.fracture}
\end{equation}
with the fluid tangential velocity and thermal conductive flux integrated along the fracture width defined by 
\begin{equation}
\bV_f=-\frac{C_f(d_f)}{\eta_f} \nabla_\tau p_f\,,\quad\q_f = -  \Lambda_f(d_f)\nabla_\tau T_f\,,
\label{eq:darcy.fourier.fracture}
\end{equation}
and where 
\begin{equation}
\df= \dzero - \jump{\bu}_\n
\label{eq:dt.df}
\end{equation}
is the fracture aperture with $\dzero$ the aperture at contact state as illustrated in  Figure \ref{fig:d0}. 
In  \eqref{eq:darcy.fourier.fracture}, $C_f$ is the fracture hydraulic conductivity typically given by the Poiseuille law $C_f = {(\df)^3 \over 12}$, and $\Lambda_f$ is the fracture thermal conductivity also possibly depending on $\df$.  
At matrix fracture interfaces, a contact Coulomb frictional model is  considered defined as follows 
\begin{equation}
\label{eq:meca.contact} 
\left\{\!\!\!\!
\begin{array}{lll}
  & {\bf T}^{+} + {\bf T}^{-} = {\bf 0}  & \mbox{ on } (0,t_F)\times \Gamma,\\[1ex]
  & T_n \leq 0, \,\,   \jump{\bu}_\n \leq 0, \,\, \jump{\bu}_\n ~  T_n = 0   & \mbox{ on } (0,t_F)\times \Gamma,\\[1ex]
  & |{\bf T}_\tau| \leq - F ~T_n   & \mbox{ on } (0,t_F)\times \Gamma, \\[1ex]
  &  (\partial_t \jump{\bu}_\tau)\cdot {\bf T}_\tau - F ~T_n |\partial_t \jump{\bu}_\tau|=0   & \mbox{ on } (0,t_F)\times \Gamma,
\end{array}
\right.
\end{equation}
where the vectorial surface tractions and their normal and tangential components are defined by 
\begin{equation*}
\left\{\!\!\!\!
\begin{array}{lll}
  & {\bf T}^{\aa} = \gamma_\n^\aa {\bbsig(\bu) +  p_f \n^\aa}  & \mbox{ on } (0,t_F)\times \Gamma, \ \aa \in \{+,-\},\\[1ex]
  & T_n = {\bf T}^{+}\cdot \n^+ & \mbox{ on } (0,t_F)\times \Gamma, \\[1ex]
  & {\bf T}_\tau = {\bf T}^{+} - ({\bf T}^{+}\cdot \n^+)\n^+ & \mbox{ on } (0,t_F)\times \Gamma.  
\end{array}
\right.
\end{equation*}
\label{eq:model.fracture}
\end{subequations}
In a similar way as in the matrix, we also consider the entropy equation defined by the combination ${1\over T}\times$ \eqref{eq:energy.h.fracture} $- {h_m\over T}\times$ \eqref{eq:mass.fracture} leading to 
\begin{equation}\label{eq:entro.fracture_v1}
\begin{aligned}
\frac{\varrho_f \df}{T_f}\partial_t e_f + \frac{p_f\varrho_f}{T_f}{}& \df \partial_t \frac{1}{\varrho_f}
  + \frac{1}{T_f}\varrho_f\bV_f\cdot\nabla h_f  + \frac{1}{T_f}( \div\q_f - \jump{\q_m}_\n)\\
  & - {1 \over T_f} \(\jump{\varrho_m h_m \bV_m}_\n - h_f \jump{\varrho_m \bV_m}_\n\)  = {H_{f} \over T_f} - {h_f \over T_f} G_{f}.
\end{aligned}
\end{equation}

By continuity of the pressure and temperature (and thus of the fluid specific enthalpy), $\jump{\varrho_m h_m\bV_m}_\n- h_f \jump{\varrho_m\bV_m}_\n=0$ and the last term in the left-hand side of \eqref{eq:entro.fracture_v1} therefore vanishes. However, at the discrete level, as a result of a possible upwinding this compensation may not necessarily occur. To keep that in mind in the continuous model, and justify the discretisation chosen in Section \ref{sec:discretisation}, we therefore write this vanishing term in the form of a (fictitious) jump of the enthalpy between the matrix and the fracture. 
$$
\jump{\varrho_m h _m\bV_m}_\n- h_f \jump{\varrho_m\bV_m}_\n = \sum_{\aa \in \{+,-\}} \gamma_\n^\aa (\varrho_m \bV_m)  (\gamma^\aa h_m - h_f ). 
$$
Therefore, \eqref{eq:entro.fracture_v1} can be recast as

\begin{equation}\label{eq:entro.fracture}
\begin{aligned}
\frac{\varrho_f \df}{T_f}\partial_t e_f + \frac{p_f\varrho_f}{T_f}{}& \df \partial_t \frac{1}{\varrho_f}
  + \frac{1}{T_f}\varrho_f\bV_f\cdot\nabla h_f  + \frac{1}{T_f}( \div\q_f - \jump{\q_m}_\n)\\
  & - {1 \over T_f} \sum_{\aa \in \{+,-\}} \gamma_\n^\aa(\varrho_m \bV_m )  (\gamma^\aa h_m - h_f )  = {H_{f} \over T_f} - {h_f \over T_f} G_{f}.
\end{aligned}
\end{equation}
Its approximation based on small Darcy velocity and temperature variations assumptions writes 
\begin{equation}\label{eq:entro.app.fracture}
\begin{aligned}
\frac{\varrho_f \df}{T_f}\partial_t e_f & + \frac{p_f\varrho_f}{T_f}{} \df \partial_t \frac{1}{\varrho_f}
  + \frac{1}{T_f}\varrho_f\bV_f\cdot\nabla e_f + \frac{p_f}{T_f}\varrho_f\bV_f\cdot\nabla {1 \over \varrho_f} + \frac{1}{\Tref}( \div\q_f - \jump{\q_m}_\n)\\
  & - {1 \over T_f} \sum_{\aa \in \{+,-\}} \gamma_\n^\aa(\varrho_m \bV_m )  \(\gamma^\aa e_m - e_f + p_f ({1 \over \gamma^\aa \varrho_m} - {1 \over \varrho_f} ) \)     = {H_{f} \over T_f} - {h_f \over T_f} G_{f}.
\end{aligned}
\end{equation}

\begin{figure}
  \begin{center}
  \includegraphics[scale=.65]{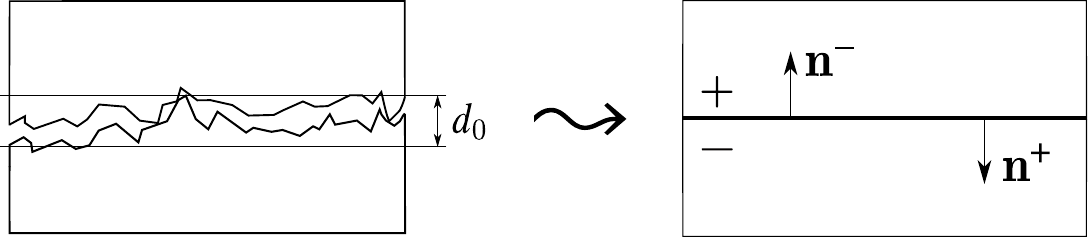}
  \caption{Conceptual fracture model with contact at asperities, $d_0$ being the fracture aperture at contact state.}
  \label{fig:d0}
\end{center}  
\end{figure}

The model is closed by considering no-flow and no-energy flux boundary conditions both for the matrix and fractures. Let us also recall that the pressure and temperature are assumed to be continuous at matrix pressure interfaces. At fracture intersections, the fracture pressure $p_f$ and temperature $T_f$ are classically also continuous, and mass and thermal flux conservation is imposed. 

In the sequel, we use the following terminology to refer to the two mixed-dimensional THM systems: 
\begin{equation}
\mbox{\eqref{eq:matrix}-\eqref{eq:revdef}-\eqref{eq:laws}-\eqref{eq:fracture}: ``\emph{enthalpy-based}'' THM model,}
\tag{$H$-model}
\label{model:enthalpy}
\end{equation}
\begin{equation}
\begin{aligned}
&\mbox{\eqref{eq:matrix}-\eqref{eq:revdef}-\eqref{eq:laws}-\eqref{eq:fracture}
with \eqref{eq:energy.h.matrix} replaced by the approximation \eqref{eq:entro.app.matrix} and}\\
&\mbox{\eqref{eq:energy.h.fracture} replaced by the approximation \eqref{eq:entro.app.fracture}: ``\emph{entropy-based}'' THM model.}
\end{aligned}
\tag{$S$-model}
\label{model:entropy}
\end{equation}

\subsection{Energy estimates}\label{sec:energyestimates} 

Let us first consider the enthalpy-based THM model \eqref{model:enthalpy}. Summing the integral over $\Omega$ of $\eqref{eq:energy.h.matrix} + \partial_t \bu \cdot \eqref{eq:meca}$ and the integral over $\Gamma$ of \eqref{eq:energy.h.fracture}, taking into account the contact and boundary conditions, the following contact persistence condition (which results from \eqref{eq:meca.contact})
$$
\int_\Gamma T_n \jump{\partial_t \bu}_\n d\sigma = 0, 
$$
and the zero dissipation equation \eqref{eq:revdef}, we obtain formally the following energy estimate:
\begin{equation}\label{eq:energyest.h}
\begin{aligned}
  \partial_t \int_\Omega ( E_s + \varrho_m \phi e_m) d\x + \partial_t \int_\Gamma  \varrho_f \df e_f d\sigma 
  + & \int_\Gamma -F T_n |\jump{\partial_t \bu}_\tau| d\sigma  \\ & = \int_\Omega (H_m + \bF \cdot \partial_t \bu)d\x + \int_\Gamma H_f d\sigma.   
\end{aligned}
\end{equation}
Let us now consider the entropy-based THM model \eqref{model:entropy}. By construction, the energy estimate for this model can be deduced from  the previous estimate \eqref{eq:energyest.h} just by adding to the left hand side of \eqref{eq:energyest.h} the term
$$
 \int_\Omega \(-\bV_m \cdot \nabla p + ({T\over \Tref}-1) \div\q_m\)d\x + \int_\Gamma \( -\bV_f \cdot \nabla_\tau p_f + ({T_f \over \Tref}-1) ( \div\q_f  - \jump{\q_m}_\n) \) d\sigma,  
$$
which corresponds to the difference between the exact terms and their approximations. Recalling the definitions \eqref{eq:DarcyFourier.matrix} and \eqref{eq:darcy.fourier.fracture} of $\bV_m$, $\bq_m$, $\bV_f$ and $\bq_f$, this leads to the following stronger form of the energy estimate 
\begin{equation}\label{eq:energyest.s}
\begin{aligned}
  & \partial_t \int_\Omega ( E_s + \varrho_m \phi e_m) d\x + \partial_t \int_\Gamma  \varrho_f \df e_f d\sigma + \int_\Gamma -F T_n |\jump{\partial_t \bu}_\tau| d\sigma \\
  & + \int_\Omega \( {\K \over \eta_m} \nabla p \cdot \nabla p  + {\Lambda_m \over \Tref} |\nabla T|^2 \) d\x 
  + \int_\Gamma \( {C_f \over \eta_f} |\nabla_\tau p_f|^2  +  {\Lambda_f  \over \Tref} |\nabla_\tau T_f|^2\) d\sigma \\ 
 & = \int_\Omega (H_m + \bF \cdot \partial_t \bu)d\x + \int_\Gamma H_f d\sigma.   
\end{aligned}
\end{equation}

\section{Discretisation}\label{sec:discretisation}

The objective is to design a discretisation preserving the energy estimates \eqref{eq:energyest.h} and \eqref{eq:energyest.s} for  respectively the enthalpy-based and entropy-based THM models. Both types of formulations will be compared in the numerical section. The schemes must account for both diffusive and convective dominated energy transport. This motivates the choice of a Finite Volume formulation of the mass and energy equations with possible upwinding of the mobilities. To fix ideas, we will consider in this work the mixed-dimensional Hybrid Finite Volume (HFV) discretisation introduced in \cite{BGGLM16} accounting for the continuity of the pressure and temperature at matrix fracture interfaces.
It will be combined with a mixed variational formulation of the contact-mechanics with a conforming Finite Element discretisation of the displacement field and a facewise constant approximation of the Lagrange multiplier introduced to represent the surface tractions along the fractures. This choice handles fracture networks with corners and intersections, and leads to a local formulation of the contact conditions. It also ensures the discrete persistence property of the contact term and a dissipative frictional term which are key conditions to obtain an energy estimate for the coupled THM systems. 

\subsection{Space and time discretisations}

Let $\cells$ denote the set of polytopal cells, and $\faces$ the set of faces of the mesh, with internal faces gathered in $\faces^{\rm int}$ and boundary faces in $\faces^{\rm ext}$. The subset $\faces_K\subset \faces$ denotes the set of faces of the cell $K\in \cells$. We denote by $\sigma=K|L$ the internal face shared by the two cells $K,L$ gathered in the subset $\cells_\sigma = \{K,L\}$.
The notations $\sigma=K|\cdot$ and $\cells_\sigma = \{K\}$ are used for a face $\sigma\in\faces_K\cap\faces^{\rm ext}$.
The mesh is assumed conforming to the fracture network $\Gamma$ in the sense that there exists a subset $\faces_\Gamma$ of $\faces$ such that
$$
\ol\Gamma = \bigcup_{\sigma \in \faces_\Gamma} \ol\sigma. 
$$
The subset of edges of a face $\sigma$ is denoted by $\edges_\sigma$ and we define the set of edges of $\Gamma$ by $\edges_\Gamma=\bigcup_{\sigma \in \faces_\Gamma} \edges_\sigma$. For a given edge $\zeta\in \edges_\Gamma$, let us denote by $\faces_{\Gamma,\zeta}$ the subset of fracture faces sharing the edge $\zeta$. 
We denote by $|K|$ the $d$-dimensional measure of the cell $K\in\cells$, and by $|\sigma|$ the $(d-1)$-dimensional measure of the face $\sigma\in\faces$.

The HFV method is a Gradient Discretisation (GD) defined by the vector space of discrete unknowns 
\[
\XD=\{v_\D=((v_K)_{K\in\cells},(v_\sigma)_{\sigma\in\faces},(v_\zeta)_{\zeta\in\edges_\Gamma})\}
=\R^{\cells\cup\faces\cup\edges_\Gamma},
\]
and the reconstruction operators $\PiDm,\gradDm$ in the matrix and $\PiDf,\gradDf$ along the fractures (see \cite{BGGLM16} for their detailed definition).
We assume that $\PiDm$ and $\PiDf$ are piecewise constant reconstructions \cite[Definition 2.12]{gdm} and, more specifically, $(\PiDm v_\D)_{|K}=v_K$ for all $K\in\cells$, and $(\PiDf v_\D)_{|\sigma}=v_\sigma$ for all $\sigma\in\faces_\Gamma$.
From the gradient reconstruction operator $\gradDm$ in the matrix, and given any $K\in\cells$ and any symmetric positive definite $d\times d$ matrix $\mathbb{D}_K$, fluxes $F_{K,\sigma}(\mathbb{D}_K;\cdot):X_\D\to\R$ (for $\sigma\in\faces_K$) are defined such that, for all $v_\D,w_\D\in X_\D$ and all $K\in\cells$,
\begin{equation}
\label{eq:def.fluxes.matrix}
\int_K \mathbb{D}_K \gradDm v_\D\cdot\gradDm w_\D d\x =\sum_{\sigma\in\faces_K}F_{K,\sigma}(\mathbb{D}_K;v_\D)(w_K-w_\sigma).
\end{equation}
Likewise, from the fracture tangential gradient reconstruction operator $\gradDf$, for any $\sigma\in\faces_\Gamma$ and any symmetric positive definite $(d-1)\times(d-1)$ matrix $\mathbb{D}_\sigma$, fracture fluxes $F_{\sigma,\zeta}(\mathbb{D}_\sigma;\cdot): \XD\to\R$ are defined (for $\zeta\in\edges_\sigma$) such that, for all $v_\D,w_\D\in X_\D$,
\begin{equation}
\label{eq:def.fluxes.fracture}
\int_\sigma \mathbb{D}_\sigma \gradDf v_\D \cdot\gradDf w_\D d\sigma =\sum_{\zeta\in\edges_\sigma}F_{\sigma,\zeta}(\mathbb{D}_\sigma;v_\D)(w_\sigma-w_\zeta).
\end{equation}
These fluxes are local to each cell $K\in \cells$ (resp.\ each fracture face $\sigma \in \faces_\Gamma$) in the sense that they only depend on $(v_K,(v_\sigma)_{\sigma\in\faces_K})$ (resp. $(v_\sigma,(v_\zeta)_{\zeta\in\edges_\sigma})$) \cite{BGGLM16}. 

In order to account for convective dominated energy transport, an upwind approximation needs to be introduced according to the sign of the Darcy flux $F_{K,\sigma}(\K_K;p_\D)$ using the following notations. 
Let the index ``$K\sigma,+$'' denote either $\sigma$ if no upwinding is used, or an upwind choice between $K$ and $L$ if $\sigma = K|L \in \faces^{\rm int} \setminus \faces_\Gamma$, or an upwind choice between $K$ and $\sigma$ if $\sigma =K|\cdot \in \faces^{\rm ext}\cup \faces_\Gamma$ (note that, if $\sigma=K|L$ is an internal face and not a fracture face, this upwinding must not depend on $K$ or $L$; for example, $\varrho_{K\sigma,+}=\varrho_{L\sigma,+}$).

Similarly, for a given fracture edge $\zeta$,  the index ``$\sigma\zeta,+$'' denote either $\zeta$ if no upwinding is used, or an upwind value between $\sigma$ and $\sigma'$  if $\faces_{\Gamma,\zeta} = \{\sigma,\sigma'\}$ and $\zeta$ is not a boundary edge, or an upwind value between $\sigma$ and $\zeta$ in the other cases. The choice of the upwind value is done according to the sign of the Darcy flux $F_{\sigma,\zeta}(C_{f,\sigma};p_\D)$.

For any fluid thermodynamical property $\Xi = h, e,\eta,\varrho$, and $p_\D, T_\D \in \XD$, we define $\Xi_\D \in \XD$ component by component by setting $\Xi_\nu = \Xi(p_\nu,T_\nu)$ for all $\nu \in \cells\cup\faces\cup\edges_\Gamma$.

The displacement field $\bu$ is discretised using a finite dimensional subspace $\U_\D$ of $\U_0$, typically given by a Finite Element Method.

We consider a time discretisation $(t^n)_{n=0,\ldots,N}$ of the time interval $(0,t_F)$ with $t^0 = 0$ and $t^N= t_F$, and denote by $\dtn = t^{n}-t^{n-1}$ the time step $n$. 
If $f=(f^n)_{n=0,\ldots,N}$ is a family of functions or vectors of $\XD$, the discrete time derivative of $f$ is defined as
$$
\deln f=\frac{f^{n}-f^{n-1}}{\dtn}.
$$
The second time derivative of the discrete displacement field is defined by 
\[
\deln \dot{\bu}_\D = 2\frac{\dot{\bu}_\D^{n}-\dot{\bu}_\D^{n-1}}{t^{n}-t^{n-2}}\quad\mbox{ with }\quad\dot{\bu}_\D^{n}=\deln \bu_\D.
\]
It corresponds to the generalisation to non-uniform time steps of the standard centered discretisation of $\partial_t^2\bu$. 
To alleviate notations, whenever $w_\D\in \XD^{N+1}$ and $\bullet=\{m,f\}$, we set $\delnbullet w_\D = \deln(\PiDbullet w_\D)$. We also consider the discrete time derivative of products of functions and elements of $\XD$, or elements of $\XD$,
by setting: $\delnbullet(f v_\D w_\D)=\deln(f\PiDbullet v_\D\PiDbullet w_\D)$ for all families of functions $f=(f^n)_{n=0,\ldots,N}$ and elements $v_\D,w_\D\in\XD^{N+1}$.

The discrete  porosities $(\phi_\D^n)_{n=0,\ldots,N}$ and skeleton entropies $(S_{s,\D}^n)_{n=0,\ldots,N}$ are families of piecewise constant functions $\Omega\to\R$ on $\cells$ such that $\phi_\D^0,S_{s,\D}^0$ are given (e.g., as projections of the continuous initial porosity and entropy) and, for all $n=1,\ldots,N$,
\begin{align}
\deln\phi_\D ={}&  b \pi_\cells(\deln\div\bu_\D) - \alpha_\phi \delnm T_\D  +\frac1N  \delnm p_\D,
\label{eq:def.discrete.phi}\\
\label{eq:dtSsD}
\deln S_{s,\D}={}& \alpha_s K_s \pi_\cells(\deln\div\bu_\D)- \alpha_\phi \delnm p_\D+\frac{C_s}{\Tref} \delnm T_\D,
\end{align}
where $\pi_\cells$ is the projection on piecewise constant functions on $\cells$ (that is, $(\pi_\cells f)_{|K}=\frac{1}{|K|}\int_K f$ for all $K\in\cells$).

In the fracture, we define the discrete apertures as $(\dfD^n)_{n=0,\ldots,N}$, with $\dfD^n:\Gamma\to\R$ given by
\begin{equation}\label{eq:def.df}
\dfD^n = \pi_{\faces,\Gamma}(d_0 - \jump{\bu_\D^n}_\n).
\end{equation}
Here, $\pi_{\faces,\Gamma}$ is the $L^2$-projection on facewise constant functions on $\Gamma$.

We also define, for $n=1,\ldots,N$, the function $\widehat{G}_{m}^n:\Omega\to\R$
which is piecewise constant on $\cells$ equal on $K\in\cells$ to the average of $G_{m}$ on $(t^{n-1},t^{n})\times K$. The piecewise constant functions  $\widehat{H}_{m}^n$ and $\widehat{\bF}^n$ on $\cells$, as well as 
$\widehat{G}_{f}^n$ and $\widehat{H}_{f}^n$ on $\faces_\Gamma$ are similarly defined. 
In the following, we use the notations
\begin{equation}\label{def:darcy-fluxes}
V^n_{K,\sigma} =  F_{K,\sigma}\({\K(\phi_K^{n-1}) \over \eta^{n-1}_K};p^{n}_\D\) \quad \mbox{  and } \quad 
V^n_{\sigma,\zeta} =  F_{\sigma,\zeta} \({C_f(\dfsig^{n-1}) \over \eta^{n-1}_\sigma};p^{n}_\D\), 
\end{equation}
for the matrix and fracture Darcy flux, as well as
\begin{equation}\label{def:fourier-fluxes}
Q^n_{K,\sigma} =  F_{K,\sigma}\(\Lambda_{m}(\phi_K^{n-1});T^{n}_\D\) \quad \mbox{  and  } \quad 
Q^n_{\sigma,\zeta} =  F_{\sigma,\zeta} \(\Lambda_f(\dfsig^{n-1});T^{n}_\D\)
\end{equation}
for the matrix and fracture Fourier fluxes. 
Note that $\K$, $\Lambda_m$, $C_f$ and  $\Lambda_f$ could also explicitly depend on $\x$. This dependence is omitted for the sake of simplicity. 

\subsection{Discretisation of the mass and energy equations}\label{sec:discretemassenergy}

Let us first consider the HFV discretisation of the mass and energy conservation equations of the enthalpy-based THM model \eqref{model:enthalpy}. For all time step $n=1,\ldots,N$, it couples the discrete mass conservation  equations 
\begin{subequations}
 \label{eq:scheme.mass}
  \begin{equation}  
    |K|\deln (\varrho_K \phi_K)  + \sum_{\sigma\in\faces_K} \varrho_{K\sigma,+}^{n} V^n_{K,\sigma} = |K|\widehat{G}^{n}_{m,K} \qquad\forall K\in\cells,\label{eq:mass.discrete:matrix} 
 \end{equation}   
\begin{equation}  
\begin{aligned}
|\sigma|{}&\deln (\varrho_\sigma \dfsig) + \sum_{\zeta\in\edges_\sigma} \varrho_{\sigma\zeta,+}^{n} V^n_{\sigma,\zeta}
 - \sum_{K\in \cells_\sigma} \varrho^{n}_{K\sigma,+} V^n_{K,\sigma} = |\sigma|\widehat{G}^{n}_{f,\sigma} \qquad\forall \sigma\in\faces_\Gamma,\label{eq:mass.discrete:fracture}
\end{aligned}
\end{equation}
\begin{equation}
   \sum_{K\in \cells_\sigma} V_{K,\sigma}^n  = 0 \qquad\forall \sigma \in\faces \backslash \faces_\Gamma,  
  \label{eq:mass.flux:cons:matrix}
\end{equation}
\begin{equation}
\begin{aligned}  
  &  - \sum_{\sigma \in \faces_{\Gamma,\zeta}}\varrho_{\sigma\zeta,+}^{n} V_{\sigma,\zeta}^n   = 0 {} &\qquad\forall \zeta \in\edges_\Gamma, 
  \label{eq:mass.flux:cons:fracture}
  \end{aligned}
\end{equation}
\end{subequations}
to the discrete total energy conservation equations
\begin{subequations}
 \label{eq:scheme.energy}
 \begin{equation}\label{eq:energy.discrete:matrix}
   \begin{aligned}
    |K|\( T_K^n \deln S_{s,K} & + p^n_K \deln \phi_K +  \deln (\varrho_K \phi_K e_K)\)  \\
    & + \sum_{\sigma\in\faces_K} ( \varrho_{K\sigma,+}^{n} h_{K\sigma,+}^n V^n_{K,\sigma} + Q_{K,\sigma}^n) = |K|\widehat{H}^{n}_{m,K} \qquad\forall K\in\cells,
    \end{aligned}
 \end{equation}   
\begin{equation}  
\begin{aligned}
|\sigma| \( p_\sigma^n \deln \dfsig & + \deln (\varrho_\sigma \dfsig e_\sigma)\)  + \sum_{\zeta\in\edges_\sigma} (\varrho_{\sigma\zeta,+}^{n} h_{\sigma\zeta,+}^{n} V^n_{\sigma,\zeta} + Q^n_{\sigma,\zeta} ) \\
& - \sum_{K\in \cells_\sigma} ( \varrho^{n}_{K\sigma,+} h^{n}_{K\sigma,+} V^n_{K,\sigma} + Q^n_{K,\sigma}) = |\sigma|\widehat{H}^{n}_{f,\sigma} \qquad\forall \sigma \in\faces_\Gamma,\label{eq:energy.discrete:fracture}
\end{aligned}
\end{equation}
\begin{equation}
\sum_{K\in \cells_\sigma} Q_{K,\sigma}^n   = 0 \qquad\forall \sigma  \in\faces \backslash \faces_\Gamma, 
  \label{eq:energy.flux:cons:matrix}
\end{equation}
\begin{equation}
\begin{aligned}  
  &  -\sum_{\sigma \in \faces_{\Gamma,\zeta}} ( \varrho_{\sigma\zeta,+}^{n} h_{\sigma\zeta,+}^{n} V_{\sigma,\zeta}^n + Q_{\sigma,\zeta}^n )  = 0 {} &\qquad\forall \zeta \in\edges_\Gamma, 
  \label{eq:energy.flux:cons:fracture}
  \end{aligned}
\end{equation}
\end{subequations}
  Note that \eqref{eq:mass.flux:cons:matrix}, \eqref{eq:energy.flux:cons:matrix} embed both the conservation of fluxes across internal faces, as well as the zero-flux conditions on boundary faces.
The same remark holds for the flux conservation equations \eqref{eq:mass.flux:cons:fracture}, \eqref{eq:energy.flux:cons:fracture} at fracture edges but keeping the full nonlinear fluxes due to the dependence on $\sigma$ of the upwind density and enthalpy when $\faces_{\Gamma,\zeta}$ contains more than two faces (which happens at internal crossing lines between three or more fractures). 
Similar considerations hold for \eqref{eq:entropy.flux:cons:matrix} and \eqref{eq:entropy.flux:cons:fracture} below.

Temporarily dropping the time index $n$ for simplicity, the HFV discretisation of the approximate entropy equations  \eqref{eq:entro.app.matrix}-\eqref{eq:entro.app.fracture} is based on 
the following discretisation of $\varrho_m \bV_m\cdot\nabla w_m = \div(\varrho_m w_m \bV_m ) - w_m~\div(\varrho_m\bV_m)$ on a given cell $K$, which includes a possible upwinding: for $w_\D\in X_\D$,
\begin{equation*}
\sum_{\sigma\in\faces_K}  w_{K\sigma,+} \varrho_{K\sigma,+} V_{K,\sigma} - w_K\sum_{\sigma\in\faces_K} \varrho_{K\sigma,+} V_{K,\sigma}
=\sum_{\sigma\in\faces_K}  \varrho_{K\sigma,+} V_{K,\sigma}(w_{K\sigma,+} - w_K).
\end{equation*}
This is a key choice which preserves the link between the non conservative entropy and the conservative energy formulations in the sense that it is designed to satisfy the following discrete version of $w_m~\div(\varrho_m\bV_m)+\varrho_m \bV_m\cdot\nabla w_m = \div(\varrho_m w_m \bV_m )$:
$$
w_K  \sum_{\sigma\in\faces_K}  \varrho_{K\sigma,+} V_{K,\sigma} + \sum_{\sigma\in\faces_K} \varrho_{K\sigma,+} V_{K,\sigma}(w_{K\sigma,+} - w_K) =
\sum_{\sigma\in\faces_K}  w_{K\sigma,+} \varrho_{K\sigma,+} V_{K,\sigma}.  
$$
Likewise, in the fractures, we use the following discretisations of $\varrho_f \bV_f\cdot\nabla_\tau w_f$  
$$
\sum_{\zeta\in\edges_\sigma}\varrho_{\sigma\zeta,+}
V_{\sigma,\zeta} (w_{\sigma\zeta,+} -w_\sigma), 
$$
on $\sigma\in \faces_\Gamma$, and 
$$
 \sum_{\sigma \in \faces_{\Gamma,\zeta}} - \varrho_{\sigma\zeta,+}
V_{\sigma,\zeta} (w_{\sigma\zeta,+} - w_\zeta), 
$$
on $\zeta\in \edges_\Gamma$. In the same spirit, the terms such as $\varrho_m \phi \partial_t w_m$ in the matrix are discretised by
$\PiDm \varrho^{n-1} \phi_\D^{n-1} \delnm w_\D$ to ensure that
\begin{equation}\label{eq:disc.product.rule}
\PiDm \varrho_\D^{n-1} \phi_\D^{n-1} \delnm w_\D + \PiDm w_\D^n \times \delnm (\varrho_\D \phi_\D)
= \delnm ( \varrho_\D \phi_\D w_\D ).  
\end{equation}
The same discretisation is applied for the terms such as $\varrho_f \df \partial_t w_f$ in the fractures. 
Following this methodology, the approximate entropy equations are discretised for all time step $n=1,\ldots,N$ by 
\begin{subequations}
 \label{eq:scheme.entropy}
 \begin{equation}\label{eq:entropy.discrete:matrix}
   \begin{aligned}
    |K| \deln S_{s,K} & +    {|K| \over T_K^n} \varrho^{n-1}_K \phi^{n-1}_K \(  \deln e_K+   p_K^n \deln {1 \over \varrho_K} \)  +  {1 \over \Tref}  \sum_{\sigma\in\faces_K}  Q_{K,\sigma}^n\\
  & +  {1 \over T_K^n} \sum_{\sigma\in\faces_K} \varrho_{K\sigma,+}^{n} V^n_{K,\sigma} \( e_{K\sigma,+}^n  - e^n_K + p_K^n ( {1 \over \varrho_{K\sigma,+}^n} - {1 \over \varrho^n_K})\)\\
  &   = {|K| \over T_K^n} ( \widehat{H}^{n}_{m,K} - h_K^n \widehat{G}^{n}_{m,K}) \qquad\forall K\in\cells,
    \end{aligned}
 \end{equation}   
\begin{equation}  
\begin{aligned}
  {|\sigma| \over T_\sigma^n}  \varrho^{n-1}_\sigma \dfsig^{n-1} & \( \deln  e_\sigma + p_\sigma^n \deln  {1 \over \varrho_\sigma}  \) + {1 \over \Tref} \( \sum_{\zeta\in\edges_\sigma} Q^n_{\sigma,\zeta} - \sum_{K\in \cells_\sigma} Q^n_{K,\sigma} \) \\
  & + {1 \over T_\sigma^n} \sum_{\zeta\in\edges_\sigma} \varrho_{\sigma\zeta,+}^{n}  V^n_{\sigma,\zeta} \( e_{\sigma\zeta,+}^{n} - e_\sigma^n + p_\sigma^n ( {1 \over \varrho^n_{\sigma\zeta,+}} - {1 \over \varrho_\sigma^n}  ) \)    \\
  & - {1 \over T_\sigma^n}\sum_{K\in \cells_\sigma} \varrho^{n}_{K\sigma,+} V^n_{K,\sigma} \( e^{n}_{K\sigma,+} - e_\sigma^n + p_\sigma^n ({1 \over \varrho^{n}_{K\sigma,+}} - {1 \over \varrho_\sigma^n}) \)\\
 &   = {|\sigma| \over T_\sigma^n} ( \widehat{G}^{n}_{f,\sigma} - h_\sigma^n \widehat{G}^{n}_{f,\sigma} ) \qquad\forall \sigma\in\faces_\Gamma,\label{eq:entropy.discrete:fracture}
\end{aligned}
\end{equation}
\begin{equation}
\sum_{K\in \cells_\sigma} Q_{K,\sigma}^n   = 0 \qquad\forall \sigma \in \faces\backslash \faces_\Gamma,
  \label{eq:entropy.flux:cons:matrix}
\end{equation}
\begin{equation}
\begin{aligned}  
  &  - {1 \over T_\zeta^n} \sum_{\sigma \in \faces_{\Gamma,\zeta}}\varrho_{\sigma\zeta,+}^{n}  V_{\sigma,\zeta}^n \( e_{\sigma\zeta,+}^{n} - e_\zeta^n + p_\zeta^n({1 \over \varrho_{\sigma\zeta,+}^n} - {1 \over \varrho_{\zeta}^n}  ) \) - {1 \over \Tref} \sum_{\sigma \in \faces_{\Gamma,\zeta}} Q_{\sigma,\zeta}^n   = 0 {} &\quad\forall \zeta \in\edges_\Gamma, 
  \label{eq:entropy.flux:cons:fracture}
  \end{aligned}
\end{equation}
\end{subequations}

By construction, applying this space time discretisation to the entropy equations \eqref{eq:entro.matrix}-\eqref{eq:entro.fracture}
would lead to an equivalence between the entropy and energy formulations at the discrete level when combined with the discrete mass equations. It results that the following Lemma states the equivalence up to the terms which have been neglected or linearised in the approximate entropy equations. 

\begin{lemma}\label{lemma_link_entropy_enthalpy_discrete}
  For all cell $K\in \cells$, $T^n_K \times \eqref{eq:entropy.discrete:matrix} + h_K^n \times\eqref{eq:mass.discrete:matrix}$ is equivalent to \eqref{eq:energy.discrete:matrix} upon correcting the left-hand side by adding
  $$
  \left({T_K^n \over \Tref} - 1\right) \sum_{\sigma\in\faces_K}  Q_{K,\sigma}^n - \sum_{\sigma\in\faces_K} V^n_{K,\sigma} ( p_{K\sigma,+}^{n} - p_K^n). 
  $$
  For all fracture face $\sigma \in \faces_\Gamma$, $T^n_\sigma \times \eqref{eq:entropy.discrete:fracture} + h_\sigma^n \times\eqref{eq:mass.discrete:fracture}$  is equivalent to \eqref{eq:energy.discrete:fracture} upon correcting the left-hand side by adding
  $$
  \left({T_\sigma^n \over \Tref} - 1\right) \( \sum_{\zeta\in\edges_\sigma}  Q_{\sigma,\zeta}^n - \sum_{K \in \cells_\sigma}  Q_{K,\sigma}^n \)
  - \sum_{\zeta\in\edges_\sigma} V^n_{\sigma,\zeta} ( p_{\sigma\zeta,+}^{n} - p_\sigma^n) + \sum_{K\in \cells_\sigma} V^n_{K,\sigma} ( p^{n}_{K\sigma,+} - p_\sigma^n). 
  $$
For all fracture edge $\zeta \in \edges_\Gamma$, $T^n_\zeta \times\eqref{eq:entropy.flux:cons:fracture} + h_\zeta^n \times \eqref{eq:mass.flux:cons:fracture}$ is equivalent to \eqref{eq:energy.flux:cons:fracture} upon correcting the left-hand side by adding
 $$
  - \left({T_\zeta^n \over \Tref} - 1\right) \sum_{\sigma \in \faces_{\Gamma,\zeta}} Q_{\sigma,\zeta}^n  + \sum_{\sigma \in \faces_{\Gamma,\zeta}} V_{\sigma,\zeta}^n ( p_{\sigma\zeta,+}^{n} - p_\zeta^n). 
  $$
  
\end{lemma}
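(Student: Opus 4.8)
The statement is a purely algebraic identity between three of the discrete equations evaluated at one fixed node and time step, so the plan is a direct term-by-term verification rather than any estimate. I would treat the three assertions (cell, fracture face, fracture edge) identically and write out only the matrix-cell case in full, the others being the same computation up to the additional matrix--fracture fluxes $\sum_{K\in\cells_\sigma}\varrho_{K\sigma,+}^n V_{K,\sigma}^n$ and the absence of the skeleton-entropy/porosity accumulation. Two structural facts drive everything: the pointwise definition of the discrete enthalpy, $h_\nu^n=e_\nu^n+p_\nu^n/\varrho_\nu^n$ at every node $\nu$ (in particular at the upwind node, so that $h_{K\sigma,+}^n=e_{K\sigma,+}^n+p_{K\sigma,+}^n/\varrho_{K\sigma,+}^n$ with the \emph{upwind} pressure $p_{K\sigma,+}^n$), and the discrete Leibniz rule $\deln(ab)=a^n\deln b+b^{n-1}\deln a$ underlying \eqref{eq:disc.product.rule}. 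I would first dispatch the right-hand sides: scaling \eqref{eq:entropy.discrete:matrix} by $T_K^n$ produces the source $|K|(\widehat{H}_{m,K}^n-h_K^n\widehat{G}_{m,K}^n)$, and adding $h_K^n$ times the source of \eqref{eq:mass.discrete:matrix} restores exactly $|K|\widehat{H}_{m,K}^n$, the source of \eqref{eq:energy.discrete:matrix}. It then remains to match the left-hand sides, which I would split into accumulation, conduction, and convection.

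\textbf{Accumulation.} The $T_K^n\deln S_{s,K}$ terms coincide at once. For the rest, I would apply \eqref{eq:disc.product.rule} with $w=e$ on $K$ to write $\varrho_K^{n-1}\phi_K^{n-1}\deln e_K=\deln(\varrho_K\phi_K e_K)-e_K^n\deln(\varrho_K\phi_K)$, then substitute $h_K^n=e_K^n+p_K^n/\varrho_K^n$ into the $h_K^n\deln(\varrho_K\phi_K)$ coming from the mass equation so that the $e_K^n\deln(\varrho_K\phi_K)$ contributions cancel and only $\tfrac{p_K^n}{\varrho_K^n}\deln(\varrho_K\phi_K)$ survives. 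Finally I would invoke the same Leibniz rule for the factorisation $\phi_K=(\varrho_K\phi_K)\cdot\varrho_K^{-1}$, namely $\deln\phi_K=\tfrac{1}{\varrho_K^n}\deln(\varrho_K\phi_K)+\varrho_K^{n-1}\phi_K^{n-1}\deln\tfrac{1}{\varrho_K}$, which collapses the two remaining pressure terms into $p_K^n\deln\phi_K$. This reproduces the accumulation $|K|\bigl(p_K^n\deln\phi_K+\deln(\varrho_K\phi_K e_K)\bigr)$ of \eqref{eq:energy.discrete:matrix} with no correction.

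\textbf{Conduction and convection, where the corrections appear.} The conductive term of \eqref{eq:entropy.discrete:matrix} carries the factor $1/\Tref$; scaling by $T_K^n$ gives $\tfrac{T_K^n}{\Tref}\sum_{\sigma\in\faces_K}Q_{K,\sigma}^n$, which differs from the $\sum_{\sigma\in\faces_K}Q_{K,\sigma}^n$ of \eqref{eq:energy.discrete:matrix} by exactly $\bigl(\tfrac{T_K^n}{\Tref}-1\bigr)\sum_{\sigma\in\faces_K}Q_{K,\sigma}^n$, the first correction. For convection, after scaling by $T_K^n$ and adding $h_K^n\sum_{\sigma\in\faces_K}\varrho_{K\sigma,+}^n V_{K,\sigma}^n$, the coefficient of $\varrho_{K\sigma,+}^n V_{K,\sigma}^n$ simplifies, using $h_K^n=e_K^n+p_K^n/\varrho_K^n$, to $e_{K\sigma,+}^n+p_K^n/\varrho_{K\sigma,+}^n$; comparing with the energy coefficient $h_{K\sigma,+}^n=e_{K\sigma,+}^n+p_{K\sigma,+}^n/\varrho_{K\sigma,+}^n$ leaves precisely $V_{K,\sigma}^n(p_K^n-p_{K\sigma,+}^n)$ per face, that is, the second correction $-\sum_{\sigma\in\faces_K}V_{K,\sigma}^n(p_{K\sigma,+}^n-p_K^n)$.

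\textbf{Main obstacle and remaining cases.} The delicate point is exactly this convective bookkeeping: the entropy formulation keeps the cell pressure $p_K^n$ in front of the $1/\varrho$ differences, whereas the enthalpy upwinding implicit in the energy flux uses the upwind pressure $p_{K\sigma,+}^n$, so one must track carefully at which node each pressure is evaluated; everything else is mechanical cancellation. The resulting residual $V_{K,\sigma}^n(p_K^n-p_{K\sigma,+}^n)$ is the discrete counterpart of the term $-\bV_m\cdot\nabla p$ discarded in passing from \eqref{eq:entro.matrix} to \eqref{eq:entro.app.matrix}, just as $\bigl(\tfrac{T_K^n}{\Tref}-1\bigr)\sum_\sigma Q_{K,\sigma}^n$ mirrors the Fourier linearisation, which is why the corrections coincide with the extra terms entering the continuous estimate \eqref{eq:energyest.s}. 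For the fracture-face and fracture-edge statements I would repeat the identical computation with $T_\sigma^n,h_\sigma^n$ (resp.\ $T_\zeta^n,h_\zeta^n$) and the fracture version of \eqref{eq:disc.product.rule} for the $\varrho_f\df$ accumulation; the only genuinely new feature is that the matrix--fracture fluxes enter the fracture equations with the opposite sign, which flips the corresponding convective residual into $+\sum_{K\in\cells_\sigma}V_{K,\sigma}^n(p_{K\sigma,+}^n-p_\sigma^n)$ and the conductive residual into $-\bigl(\tfrac{T_\zeta^n}{\Tref}-1\bigr)\sum_{\sigma\in\faces_{\Gamma,\zeta}}Q_{\sigma,\zeta}^n$, exactly the signs recorded in the statement.
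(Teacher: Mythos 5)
Your proposal is correct and follows essentially the same route as the paper's proof: a direct algebraic verification that combines the scaled equations, uses $h=e+p/\varrho$ at the cell and at the upwind nodes, and invokes the discrete product rule \eqref{eq:disc.product.rule} (with $w=e$ and with $\phi=(\varrho\phi)\varrho^{-1}$) to collapse the accumulation terms, leaving exactly the stated Fourier and convective residuals. Your accumulation/conduction/convection split and the sign bookkeeping for the fracture face and edge cases match the paper's computation.
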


\begin{proof}
  Let us detail only the proof for the cell terms since fracture face and edge terms are similar.
  Gathering the terms of $T^n_K \times  \eqref{eq:entropy.discrete:matrix} + h_K^n \times \eqref{eq:mass.discrete:matrix}$  leads to the equation:
  \begin{equation*}
    \begin{aligned}
      &  |K| \left[ T_K^n \deln S_{s,K}  +  h_K^n \deln (\varrho_K \phi_K)  + \varrho^{n-1}_K \phi^{n-1}_K \(  \deln e_K+   p_K^n \deln {1 \over \varrho_K} \) \right]\\
     & + h_K^n \sum_{\sigma\in\faces_K} \varrho_{K\sigma,+}^{n} V^n_{K,\sigma} +  \sum_{\sigma\in\faces_K} \varrho_{K\sigma,+}^{n} V^n_{K,\sigma} \( e_{K\sigma,+}^n  - e^n_K + p_K^n ( {1 \over \varrho_{K\sigma,+}^n} - {1 \over \varrho^n_K})\) \\
   &      +     {T_K^n \over \Tref}  \sum_{\sigma\in\faces_K}  Q_{K,\sigma}^n = |K| \widehat{H}^{n}_{m,K}. 
   \end{aligned}    
  \end{equation*}
  Recalling that $h=e+p/\varrho$, we obtain
  \begin{equation*}
    \begin{aligned}
      &  |K| \left[ T_K^n \deln S_{s,K}  +  \( e_K^n \deln (\varrho_K \phi_K)  + \varrho^{n-1}_K \phi^{n-1}_K \deln e_K \) \right.\\ 
      &\qquad\qquad\qquad\qquad  + \left. p_K^n \( \varrho^{n-1}_K \phi^{n-1}_K  \deln {1 \over \varrho_K} + {1 \over \varrho_K^n} \deln (\varrho_K \phi_K) \) \right]\\
     &   \sum_{\sigma\in\faces_K} \varrho_{K\sigma,+}^{n} V^n_{K,\sigma} \( h_{K\sigma,+}^n   + ( p_K^n - p_{K\sigma,+}^n ) {1 \over \varrho_{K\sigma,+}^n} \) 
         +     {T_K^n \over \Tref}  \sum_{\sigma\in\faces_K}  Q_{K,\sigma}^n = |K| \widehat{H}^{n}_{m,K}. 
   \end{aligned}    
  \end{equation*}

  Invoking \eqref{eq:disc.product.rule}, this relation reduces to 
  \begin{equation*}
    \begin{aligned}
      &  |K| \( T_K^n \deln S_{s,K}  +  p_K^n \deln \phi_K  +  \deln ( \varrho_K \phi_K e_K ) \) 
       +  \sum_{\sigma\in\faces_K} \( \varrho_{K\sigma,+}^{n} h_{K\sigma,+}^{n}  V^n_{K,\sigma} +  Q_{K,\sigma}^n\) \\
     & - \sum_{\sigma\in\faces_K}  V^n_{K,\sigma} ( p_{K\sigma,+}^n  - p^n_K)  + \left({T_K^n \over \Tref} -1\right) \sum_{\sigma\in\faces_K} Q_{K,\sigma}^n
    = |K| \widehat{H}^{n}_{m,K},  
   \end{aligned}    
  \end{equation*}  
  which is the expected result for the cell term. 
\end{proof}

\subsection{Mixed variational discretisation of the contact-mechanical model}\label{sec:discretecontactmechanics}

The subspace $M_\D\subset L^2(\Gamma)$ denotes the set of piecewise constant functions on the partition $\faces_\Gamma$ and we set $\M_\D = (M_\D)^d$. For $\l_\D$ in $\M_\D$,  we use the decomposition $\l_\D = (\lambda_{\D,\n},\l_{\D,\tau})$ with $\lambda_{\D,\n} = \l_\D\cdot {\bf n}^+$, $\l_{\D,\tau} = \l_\D - \lambda_{\D,\n} {\bf n}^+$. We denote by $\l_\sigma$ the constant value of $\l_\D \in \M_\D$ on the face $\sigma\in \faces_\Gamma$ and by $\lambda_{n,\sigma}$ and $\l_{\tau,\sigma}$ its normal and tangential components. 

We consider the mixed formulation based on facewise constant Lagrange multipliers, which seamlessly deals with fracture intersections, corners and tips and leads to local expressions of the discrete contact conditions and efficient semi-smooth Newton solvers. On the other hand, this approach requires to assume the following uniform inf-sup condition between the space $\U_\D$ of displacement fields and the space $\M_\D$ of Lagrange multipliers: there exists $c_\star$ independent on the mesh such that 
\begin{equation}
  \label{eq_infsupUhMh}
  \inf_{\m_\D \in \M_\D} \sup_{\bv_\D \in \U_\D}  { \dsp \int_\Gamma \m_\D \cdot \jump{\bv_\D} d\sigma \over \|\bv_\D\|_{\U_0} \|\m_\D\|_{H^{-{1\over 2}}(\Gamma)^d}}\geq c_\star > 0. 
\end{equation}

Let us define the discrete dual cone of normal Lagrange multipliers as
$$
\Lambda_\D = \{\lambda_{\D,\n}\in M_\D \,|\, \lambda_{\D,\n} \geq 0 \mbox{ on } \Gamma\},  
$$
and the discrete dual cone of vectorial Lagrange multipliers given $\lambda_{\D,\n} \in \Lambda_\D$ as
$$
\L_\D(\lambda_{\D,\n}) = \{ \m_\D = (\mu_{\D,\n},\m_{\D,\tau}) \in \M_\D \,|\, \mu_{\D,\n} \geq 0, |\m_{\D,\tau}| \leq F \lambda_{\D,\n}  \mbox{ on } \Gamma\}.   
$$
Note that the friction coefficient $F$ is assumed to be facewise constant on the partition $\faces_\Gamma$. 

The mixed discretisation of the quasi static contact-mechanical model reads: find $(\bu_\D,\l_\D)\in \U_\D\times \L_\D(\lambda_{\D,\n})$  such that, for all $(\bv_\D,\m_\D)\in \U_\D\times \L_\D(\lambda_{\D,\n})$,
\begin{subequations}\label{Mixed_meca}
  \begin{equation}\label{Mixed_meca_eq}
    \begin{aligned} 
 & \dsp \int_\Omega \( m_0 \deln \dot{\bu}_\D \cdot \bv_\D +  \bbsig( \bu^n_\D): \bbeps( \bv_\D) - \left[ b ~ \PiDm p^n_\D  + \alpha_s K_s (\PiDm T_\D^n - \Tref) \right] ~\div( \bv_\D) \) \d\x\\
& \quad \quad +  \int_\Gamma \l^n_\D  \cdot \jump{\bv_\D} d\sigma 
+ \int_\Gamma \PiDf p^n_\D ~\jump{ \bv_\D}_\n  ~\d\sigma 
\dsp =  \int_\Omega \widehat{\bF}^n \cdot \bv_\D ~\d\x,
\end{aligned}
    \end{equation}
    \begin{equation}\label{Mixed_meca_ineq}
 \dsp \int_\Gamma \( (\mu_{\D,\n}-\lambda^n_{\D,\n})  \jump{\bu^n_\D}_\n + (\m_{\D,\tau}-\l^n_{\D,\tau}) \cdot \jump{\deln {\bu}_\D}_\tau \) d\sigma \leq 0, 
\end{equation}
\end{subequations}
The variational inequality in \eqref{Mixed_meca_ineq} is equivalent to the contact conditions  between $\l_\sigma$ and the face average of $\jump{\bu_\D}$ denoted by $\jump{\bu_\D}_\sigma$  (see e.g.~Lemma 4.1 of \cite{BDMP:21}). This is a key property to obtain the following discrete persistence property of the normal contact term
\begin{equation}
  \label{persistency_discrete}
  \int_\Gamma \lambda^n_{\D,\n}   \jump{\deln \bu_\D}_\n d\sigma \geq 0, 
\end{equation}
leading to the dissipative property of the contact term 
\begin{equation}
  \label{dissipative_contact_discrete}
  \int_\Gamma \l^n_{\D} \cdot  \jump{\deln \bu_\D} d\sigma \geq  \int_\Gamma F \lambda^n_{\D,\n} |\jump{\deln \bu_\D}_{\tau} | d\sigma \geq 0.  
\end{equation}
We refer the interested reader to \cite{BDMP:21} for the proof.

\subsection{Summary of the schemes and discrete energy estimates}\label{sec:discreteenergyestimates} 

To summarise, the discretisations of the enthalpy-based and entropy-based formulations are as follows:
\begin{equation}
\mbox{\eqref{eq:scheme.mass}-\eqref{eq:scheme.energy}-\eqref{Mixed_meca}-\eqref{eq:def.discrete.phi}-\eqref{eq:dtSsD}-\eqref{eq:def.df} for the enthalpy-based model \eqref{model:enthalpy}.}
\tag{$H$-scheme}
\label{scheme:enthalpy}
\end{equation}
\begin{equation}
\mbox{\eqref{eq:scheme.mass}-\eqref{eq:scheme.entropy}-\eqref{Mixed_meca}-\eqref{eq:def.discrete.phi}-\eqref{eq:dtSsD}-\eqref{eq:def.df} for the entropy-based model \eqref{model:entropy}.}
\tag{$S$-scheme}
\label{scheme:entropy}
\end{equation}

Let us  define the discrete skeleton internal energy $\Ens^n:\Omega\to\R$ by 
\begin{equation*}
\Ens^n = \frac12 \begin{bmatrix}\PiDm p^n_\D & \PiDm T^n_\D\end{bmatrix}M\begin{bmatrix}\PiDm p^n_\D \\ \PiDm T^n_\D\end{bmatrix} + \alpha_s K_s \Tref ~\div \bu^n_\D 
 + \mu |\bbeps(\bu_\D^n)|^2+ {\lambda \over 2} (\div\bu_\D^n)^2.
\end{equation*}

Then, the following propositions state the discrete energy estimates for both formulations of the energy equation. 

\begin{proposition}\label{prop_energy_est_energy}
The scheme \eqref{scheme:enthalpy} based on the conservative enthalpy formulation of the energy equation satisfies, for all $n=1,\ldots,N$, the following energy estimate 
  \begin{equation}\label{energy_est_energy}
\begin{aligned}
   \int_\Omega {}& {m_0\over 2} \deln |\dot{\bu}_\D|^2 d\x + \int_\Omega \deln \Ens +  \int_\Omega \delnm(\varrho_\D\phi_\D e_\D) + \int_\Gamma \delnf(\varrho_\D\dfD e_\D)  \\
& + \int_\Gamma F \lambda^n_{\D,\n} |\jump{\deln \bu_\D}_{\tau} | d\sigma  \le{}  
\int_\Omega \( \widehat{H}^{n}_{m} + \widehat{\bF}^n \cdot \deln \bu_\D \) d\x 
+ \int_\Gamma \widehat{H}^{n}_{f} d\sigma. 
\end{aligned}
\end{equation}
\end{proposition}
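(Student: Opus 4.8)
The plan is to mirror, at the discrete level, the formal derivation of \eqref{eq:energyest.h}. Concretely, I would sum the discrete matrix total-energy balance \eqref{eq:energy.discrete:matrix} over $K\in\cells$, sum the discrete fracture total-energy balance \eqref{eq:energy.discrete:fracture} over $\sigma\in\faces_\Gamma$, and add to these the mechanical variational equation \eqref{Mixed_meca_eq} tested against $\bv_\D=\deln\bu_\D$. The estimate \eqref{energy_est_energy} should then follow from three reductions: cancellation of the interior convective and conductive fluxes, a discrete analogue of the zero-dissipation identity \eqref{eq:revdef} that reconstructs $\int_\Omega\deln\Ens$, and convexity/dissipativity lower bounds for the inertial, quadratic and contact contributions. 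Note that, as in the continuous case, the discrete mass equations are not needed for this particular estimate.

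First I would handle the fluxes. Summing \eqref{eq:energy.discrete:matrix} over cells, the quantities $\varrho_{K\sigma,+}^nh_{K\sigma,+}^nV^n_{K,\sigma}+Q^n_{K,\sigma}$ attached to an interior matrix face $\sigma=K|L\notin\faces_\Gamma$ cancel pairwise by \eqref{eq:energy.flux:cons:matrix}, which also annihilates the boundary fluxes; the fluxes carried by $\sigma\in\faces_\Gamma$ are precisely the matrix contributions $-\sum_{K\in\cells_\sigma}(\cdots)$ present in \eqref{eq:energy.discrete:fracture}, and summing the latter over $\faces_\Gamma$ together with the edge conservation \eqref{eq:energy.flux:cons:fracture} (which also covers fracture intersections) removes the remaining tangential fluxes. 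Thus summing \eqref{eq:energy.discrete:matrix} and \eqref{eq:energy.discrete:fracture} retains only accumulation and source terms. Simultaneously, I would use \eqref{eq:def.df} (with $d_0$ time-independent and $\PiDf$ piecewise constant) to rewrite the fracture pressure work as $\sum_{\sigma\in\faces_\Gamma}|\sigma|\,p_\sigma^n\deln\dfsig=-\int_\Gamma\PiDf p_\D^n\,\jump{\deln\bu_\D}_\n\,d\sigma$, so that it cancels exactly against the fracture pressure term of \eqref{Mixed_meca_eq} once the two blocks are added.

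The crux is the discrete counterpart of \eqref{eq:revdef}. On each cell I would insert the constitutive laws \eqref{eq:def.discrete.phi} and \eqref{eq:dtSsD} into $T_K^n\deln S_{s,K}+p_K^n\deln\phi_K$ and combine this with the poro-thermal coupling $-\int_\Omega[b\,\PiDm p_\D^n+\alpha_sK_s(\PiDm T_\D^n-\Tref)]\div(\deln\bu_\D)\,d\x$ supplied by \eqref{Mixed_meca_eq}. Since $\PiDm$ is piecewise constant and \eqref{eq:def.discrete.phi}--\eqref{eq:dtSsD} involve the cell average $\pi_\cells(\deln\div\bu_\D)$, the contributions proportional to $b\,p_K^n$ and to $\alpha_sK_s\,T_K^n$ cancel exactly, leaving the affine term $\alpha_sK_s\Tref\,\deln\div\bu_\D$ and the quadratic form $\begin{bmatrix}p_K^n&T_K^n\end{bmatrix}M\begin{bmatrix}\deln p_K\\\deln T_K\end{bmatrix}$. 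The positive definiteness of $M$ yields, by convexity of $x\mapsto\tfrac12 x^{t}Mx$, the bound $\begin{bmatrix}p_K^n&T_K^n\end{bmatrix}M\begin{bmatrix}\deln p_K\\\deln T_K\end{bmatrix}\ge\deln\big(\tfrac12\begin{bmatrix}p_K&T_K\end{bmatrix}M\begin{bmatrix}p_K\\T_K\end{bmatrix}\big)$; combined with the elastic identity $\bbsig(\bu_\D^n):\bbeps(\deln\bu_\D)=2\mu\,\bbeps(\bu_\D^n):\deln\bbeps(\bu_\D)+\lambda\,\div\bu_\D^n\,\deln\div\bu_\D\ge\deln\big(\mu|\bbeps(\bu_\D)|^2+\tfrac\lambda2(\div\bu_\D)^2\big)$, this reconstructs $\int_\Omega\deln\Ens$ as a lower bound. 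I expect this algebraic matching to be the main obstacle: it is where the reversibility assumption and the sign of $M$ are used, and where the starting equalities must be turned into the required one-sided bounds, with the $\div\bu_\D$ couplings cancelling only because the same piecewise-constant reconstructions $\PiDm$, $\PiDf$ appear in both the flow constitutive laws and the mechanical coupling.

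It remains to bound the inertial and contact terms. For the inertia I would apply the elementary inequality $2\,\mathbf a\cdot(\mathbf a-\mathbf b)\ge|\mathbf a|^2-|\mathbf b|^2$ with $\mathbf a=\dot{\bu}_\D^n$, $\mathbf b=\dot{\bu}_\D^{n-1}$, together with the definition of $\deln\dot{\bu}_\D$ and $\dot{\bu}_\D^n=\deln\bu_\D$, which bounds $\int_\Omega m_0\deln\dot{\bu}_\D\cdot\deln\bu_\D\,d\x$ from below by the kinetic contribution $\int_\Omega\tfrac{m_0}2\deln|\dot{\bu}_\D|^2\,d\x$ (the centred second difference being designed precisely for this telescoping). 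For the contact coupling I would invoke the discrete dissipativity property \eqref{dissipative_contact_discrete}, giving $\int_\Gamma\l_\D^n\cdot\jump{\deln\bu_\D}\,d\sigma\ge\int_\Gamma F\lambda_{\D,\n}^n|\jump{\deln\bu_\D}_\tau|\,d\sigma$. Since the assembled relation is an equality whose right-hand side is $\int_\Omega(\widehat H_m^n+\widehat\bF^n\cdot\deln\bu_\D)\,d\x+\int_\Gamma\widehat H_f^n\,d\sigma$ and each of the blocks above has been bounded below, substituting these bounds produces exactly \eqref{energy_est_energy}.
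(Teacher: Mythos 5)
Your proposal is correct and follows essentially the same route as the paper's proof: testing \eqref{Mixed_meca_eq} with $\bv_\D=\deln\bu_\D$, summing the discrete energy balances over cells, fracture faces and fracture edges, invoking flux conservativity, the definitions \eqref{eq:def.df}, \eqref{eq:def.discrete.phi}, \eqref{eq:dtSsD} to reconstruct $\deln\Ens$, and then the convexity (implicit-Euler) inequalities together with the contact dissipativity \eqref{dissipative_contact_discrete}. One small correction: the cancellation of the convective energy fluxes at interior non-fracture faces relies on the Darcy flux conservation \eqref{eq:mass.flux:cons:matrix} combined with the fact that the upwind density and enthalpy there do not depend on the cell, not on \eqref{eq:energy.flux:cons:matrix} alone, so the mass-flux conservation equations are in fact needed — the paper cites \eqref{eq:mass.flux:cons:matrix}--\eqref{eq:energy.flux:cons:matrix} for exactly this reason.
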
  
\begin{proof}
Adding \eqref{Mixed_meca_eq} with $\bv_\D = \deln \bu_\D = \dot{\bu}^n_\D$ to the sums over $\cells$ of \eqref{eq:energy.discrete:matrix}, over $\faces_\Gamma$ of  \eqref{eq:energy.discrete:fracture} and over $\edges_\Gamma$ of \eqref{eq:energy.flux:cons:fracture}, and taking into account the flux conservativity and the homogeneous Neumann boundary conditions  \eqref{eq:mass.flux:cons:matrix}-\eqref{eq:energy.flux:cons:matrix}, together with the definitions of the discrete fracture aperture \eqref{eq:def.df}, porosity \eqref{eq:def.discrete.phi} and skeleton entropy \eqref{eq:dtSsD}, we obtain the following equality 
\begin{equation*}
  \begin{aligned}
  &   \int_\Omega {} m_0 \deln \dot{\bu}_\D \cdot \dot{\bu}^n_\D ~d\x  + \int_\Omega \delnm ( \varrho_\D \phi_\D e_\D)  d\x + \int_\Gamma  \delnf ( \varrho_\D \dfD e_\D) d\sigma \\ 
  &  + \int_\Omega  \( \begin{bmatrix}\PiDm p^n_\D & \PiDm T^n_\D \end{bmatrix}M \delnm \begin{bmatrix} p_\D \\ T_\D\end{bmatrix}d\x    +   \bbsig^e(\bu^{n}_\D):\deln \bbeps(\bu_\D) d\x
    + \alpha_s K_s \Tref ~\div \deln \bu_\D \) d\x \\
  &  + \int_\Gamma \l^n_{\D} \cdot \jump{\deln \bu_\D}  d\sigma  = \int_\Omega \( \widehat{H}^{n}_{m} + \widehat{\bF}^n \cdot \deln \bu_\D \) d\x 
+ \int_\Gamma \widehat{H}^{n}_{f} d\sigma. 
\end{aligned}
\end{equation*}
Then, using the diffusive properties of the implicit Euler time stepping (that is e.g., $\deln \dot{\bu}_\D \cdot \dot{\bu}^n_\D\ge \frac12 |\dot{\bu}^{n}_\D|^2-\frac12 |\dot{\bu}^{n-1}_\D|^2$) and the dissipative property of the discrete contact term  \eqref{dissipative_contact_discrete}, we obtain the energy estimate \eqref{energy_est_energy}. 
\end{proof}

\begin{proposition}\label{prop_energy_est_entropy}
The scheme \eqref{scheme:entropy} based on the non-conservative approximate entropy equation satisfies, for all $n=1,\ldots,N$, the following energy estimate 
  \begin{equation}\label{energy_est_entropy}
\begin{aligned}
   \int_\Omega {}& {m_0\over 2}\deln |\dot{\bu}_\D|^2 d\x + \int_\Omega \deln \Ens +  \int_\Omega \delnm(\varrho_\D\phi_\D e_\D) + \int_\Gamma \delnf(\varrho_\D\dfD e_\D)  \\
& + \int_\Omega \( {\K( \phi_\D^{n-1}) \over \PiDm \eta_\D^{n-1}} \gradDm p_\D^n \cdot \gradDm p_\D^n  + {\Lambda_m(\phi_\D^{n-1}) \over \Tref} |\gradDm T_\D^n|^2 \) d\x \\
 & + \int_\Gamma \( { C_f(\dfD^{n-1})  \over \PiDf \eta_\D^{n-1}} |\gradDf p_\D^n|^2  +  {\Lambda_f(\dfD^{n-1})  \over \Tref} |\gradDf T_\D^n|^2\) d\sigma \\        
& + \int_\Gamma F \lambda^n_{\D,\n} |\jump{\deln \bu_\D}_{\tau} | d\sigma  \le{}  
\int_\Omega \( \widehat{H}^{n}_{m} + \widehat{\bF}^n \cdot \deln \bu_\D \) d\x 
+ \int_\Gamma \widehat{H}^{n}_{f} d\sigma. 
\end{aligned}
\end{equation}
\end{proposition}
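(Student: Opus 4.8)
The plan is to use Lemma~\ref{lemma_link_entropy_enthalpy_discrete} to transfer the analysis of \eqref{scheme:entropy} onto the already-established conservative estimate, and then to recognise the correction terms produced by the lemma as exactly the four coercive diffusive integrals appearing in \eqref{energy_est_entropy}. In \eqref{scheme:entropy} the discrete mass equations \eqref{eq:scheme.mass} and approximate entropy equations \eqref{eq:scheme.entropy} hold; hence, by Lemma~\ref{lemma_link_entropy_enthalpy_discrete}, for every $K\in\cells$, $\sigma\in\faces_\Gamma$ and $\zeta\in\edges_\Gamma$ the left-hand side of the associated energy relation \eqref{eq:energy.discrete:matrix}, \eqref{eq:energy.discrete:fracture}, \eqref{eq:energy.flux:cons:fracture}, once augmented by its correction term, equals the energy right-hand side; equivalently, the energy left-hand side equals the energy right-hand side \emph{minus} that correction. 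I can therefore replay verbatim the computation in the proof of Proposition~\ref{prop_energy_est_energy}: test \eqref{Mixed_meca_eq} with $\bv_\D=\deln\bu_\D$, sum the energy relations over $\cells$, $\faces_\Gamma$ and $\edges_\Gamma$, and use \eqref{eq:mass.flux:cons:matrix}--\eqref{eq:energy.flux:cons:matrix} to telescope the convective and conductive energy fluxes across interior faces, matrix--fracture interfaces and fracture junctions.

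The outcome is the same equality as in the proof of Proposition~\ref{prop_energy_est_energy}, whose left-hand side $\mathcal L$ gathers the inertial term $\int_\Omega m_0\deln\dot{\bu}_\D\cdot\dot{\bu}^n_\D$, the internal-energy increments $\int_\Omega\delnm(\varrho_\D\phi_\D e_\D)+\int_\Gamma\delnf(\varrho_\D\dfD e_\D)$, the thermo-poro-elastic bilinear terms, and the contact term $\int_\Gamma\l^n_\D\cdot\jump{\deln\bu_\D}\,\d\sigma$, except that the sum $\mathcal C$ of all correction terms now appears on the right. Thus I would reach $\mathcal L+\mathcal C=\int_\Omega(\widehat H^n_m+\widehat{\bF}^n\cdot\deln\bu_\D)\,\d\x+\int_\Gamma\widehat H^n_f\,\d\sigma$, and the whole point reduces to showing that $\mathcal C$ coincides with the coercive integrals of \eqref{energy_est_entropy}.

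This identification is the heart of the proof and the main obstacle, as the bookkeeping is delicate. I would group the Darcy-type corrections by the flux they carry. At an interior non-fracture face $\sigma=K|L$ the two adjacent cell corrections combine; by $V^n_{K,\sigma}+V^n_{L,\sigma}=0$ from \eqref{eq:mass.flux:cons:matrix} and the prescribed $K/L$-independence of the upwind choice, the upwind pressure $p^n_{K\sigma,+}$ cancels and only $V^n_{K,\sigma}(p^n_K-p^n_L)$ survives. At a fracture face the matrix-cell and fracture-face corrections carrying the same flux $V^n_{K,\sigma}$ pair into $V^n_{K,\sigma}(p^n_K-p^n_\sigma)$ (the upwind value again cancelling), and at an edge the fracture-face and edge corrections pair into $V^n_{\sigma,\zeta}(p^n_\sigma-p^n_\zeta)$; boundary fluxes vanish by \eqref{eq:mass.flux:cons:matrix}. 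Summing these and invoking the flux--gradient identities \eqref{eq:def.fluxes.matrix} and \eqref{eq:def.fluxes.fracture} with the diffusion tensors of the Darcy fluxes \eqref{def:darcy-fluxes} reconstructs the two pressure integrals. The Fourier corrections are treated identically: the non-constant weights $\big(\tfrac{T^n_\bullet}{\Tref}-1\big)$ collapse, through $Q^n_{K,\sigma}+Q^n_{L,\sigma}=0$ from \eqref{eq:energy.flux:cons:matrix} on interior faces and direct face--edge and matrix--fracture pairing elsewhere, into the temperature differences $\tfrac{1}{\Tref}(T^n_K-T^n_L)$, $\tfrac{1}{\Tref}(T^n_K-T^n_\sigma)$, $\tfrac{1}{\Tref}(T^n_\sigma-T^n_\zeta)$, which the same identities with the conductivities of \eqref{def:fourier-fluxes} turn into the two temperature integrals divided by $\Tref$. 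The crux is precisely to check that these upwind reconstructions and varying weights cancel cleanly, leaving only squared discrete gradients; this rests on systematically pairing each correction with its flux-conservation partner.

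Finally, having written $\mathcal L+\mathcal C=\text{RHS}$ with $\mathcal C\ge 0$ equal to the four coercive integrals, I would conclude exactly as in Proposition~\ref{prop_energy_est_energy}: bound the inertial contribution below by $\tfrac{m_0}{2}\deln|\dot{\bu}_\D|^2$, the thermo-poro-elastic bilinear terms below by $\deln\Ens$ using convexity of the quadratic skeleton energy $\Ens$, and the contact term below by $\int_\Gamma F\lambda^n_{\D,\n}|\jump{\deln\bu_\D}_\tau|\,\d\sigma$ through \eqref{dissipative_contact_discrete}. Substituting these lower bounds into $\mathcal L$ turns the equality into the desired estimate \eqref{energy_est_entropy}.
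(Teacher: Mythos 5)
Your proposal is correct and follows essentially the same route as the paper: the paper's proof of Proposition~\ref{prop_energy_est_entropy} is precisely "replay the proof of Proposition~\ref{prop_energy_est_energy}, invoke Lemma~\ref{lemma_link_entropy_enthalpy_discrete}, and identify the summed correction terms with the coercive Darcy and Fourier integrals via \eqref{eq:def.fluxes.matrix}--\eqref{eq:def.fluxes.fracture} and \eqref{def:darcy-fluxes}--\eqref{def:fourier-fluxes}". Your expansion of the pairing argument (cancellation of the upwind values and of the $T^n_\bullet/\Tref$ weights through the flux-conservation relations at interior faces, matrix--fracture interfaces and fracture edges) correctly fills in the bookkeeping the paper leaves implicit.
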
  
\begin{proof}
  The estimate \eqref{energy_est_entropy} is a consequence of the proof of Proposition \ref{prop_energy_est_energy} combined with Lemma \ref{lemma_link_entropy_enthalpy_discrete} and the definitions of the Darcy \eqref{def:darcy-fluxes} and Fourier \eqref{def:fourier-fluxes}  fluxes from the coercive matrix \eqref{eq:def.fluxes.matrix} and fracture \eqref{eq:def.fluxes.fracture} fluxes. 
\end{proof}

\section{Numerical experiments}\label{sec:numerics} 

Two test cases are considered in this Section. The first is based on a manufactured solution for both the entropy and enthalpy-based models on a square domain with no fracture. The objective is to assess and compare the convergence of the schemes for both models and both for centred and upwind approximations of the thermal convection. Two values of the permeability are tested to induce either convection dominated or equilibrated diffusion and convection thermal regimes. 

The second example considers a Discrete Fracture Matrix (DFM) model introduced in \cite{contact-norvegiens} including a six fracture network. The setting of the simulation follows the test case proposed in \cite{STEFANSSON2021114122} and the physical and numerical behavior of the discrete models are investigated both for the case of a slightly compressible liquid and for the case of a perfect gas.

In all these numerical experiments, the HFV discretisations of the non-isothermal flow are combined with the $\mathbb{P}_2$ conforming Finite Element discretisation of the displacement field. This choice ensures
the inf-sup condition \eqref{eq_infsupUhMh} for the $ \mathbb{P}_2-\mathbb{P}_0 $ mixed formulation of the contact-mechanics \eqref{Mixed_meca}.
It also satisfies the inf-sup condition between the displacement and pressure discrete spaces which prevents potential oscillations of the pressure field at short times in the undrained regime.

The coupled nonlinear system is solved at each time step using a fixed-point method on the function:
$$
\mathbf{g}_{p,T}: (p,T) \underset{\substack{\text { Contact Mechanics } \\ \text { Solve }}}{\longrightarrow} \mathbf{u}_h \underset{\substack{\text { Flow } \\ \text { Solve }}}{\longrightarrow} (\tilde{p},\tilde{T}),
$$
accelerated by a Newton--Krylov algorithm -- which have proven to be efficient within this context in the isothermal case \cite{BDMP:21}. The stopping criteria is set to $10^{-10}$ on the relative residual. We refer to \cite{BRUN20201964} for an investigation of various fixed-point algorithms for THM models. 

At each iteration of the Newton--Krylov algorithm, the $p,T$ sub-system is solved using a Newton--Raphson algorithm and the contact-mechanics is solved using a semi-smooth Newton method. In both cases, the stopping criteria is defined by a relative norm of the residual set to $10^{-10}$ or a scaled maximum Newton increment of $10^{-10}$. 
At each Newton iteration, the linear system is solved using the sparse direct solver SuperLU version 4.3 both for the non-isothermal flow and for the contact-mechanics.

\subsection{Manufactured solution without fractures}\label{sec:test1} 
 We investigate in this section the numerical convergence of the schemes \eqref{scheme:enthalpy} and \eqref{scheme:entropy} for respectively the \eqref{model:enthalpy} and \eqref{model:entropy} models using the following analytical solution 
$$
\begin{aligned}
& \mathbf{u}(\mathbf{x}, t)=10^{-1} \mathrm{e}^{-t}\left(\begin{array}{c}
x^2 y^2 \\
-x^2 y^2
\end{array}\right), \\
& p(\mathbf{x}, t)=\mathrm{e}^{-t} \sin \left(x\right) \sin \left(y\right), \quad T(\mathbf{x}, t)=\mathrm{e}^{-t}\left( 2 -\cos \left(x\right) \cos \left(y\right)\right),  
\end{aligned}
$$
on the domain $\Omega= (0,1)^2\,\mathrm{m}^2$ and time interval $(0,t_F)$ with $t_F =1 \, \mathrm{s}$. The fluid is assumed incompressible and its specific internal energy is defined by $e(T)=T$. 
Two values $\mathbb{K}= 100~\mathbb{I} \, \mathrm{m}^2$ and $\mathbb{K}= \mathbb{I} \, \mathrm{m}^2$ (with $\mathbb{I}$ the identity matrix) of the homogeneous isotropic permeability are considered. They are chosen such that the resulting Darcy velocity $\bV_m$ corresponds to a thermal convection dominated regime in the first case, and to similar orders of magnitude for thermal convection and diffusion in the second case. 
Dirichlet boundary conditions are imposed for $p$, $T$ and $\bu$ on $(0,t_F)\times \partial \Omega$  and the source terms $G_m$, $H_m$ and $\mathbf{F}$ are computed from the analytical solution based on the data set defined in Table \ref{TB1}.
The domain $\Omega$ is discretised using the first family of triangular meshes from \cite{fvca5bench} as illustrated in Figure \ref{Fg:Domaine}. Each mesh indexed by $m\in \{1,2,3,4\}$ includes $\# \cells = 56 \times 4^{m-1}$ triangles.  We consider a uniform time stepping of $(0,t_F)$ with time step $\Delta t = 2. 10^{-5} \, \mathrm{s}$ chosen small enough to reduce the error due to the time discretisation and focus on the convergence in space.

The convergence of the $L^2$ space time errors for $p,T,\mathbf{u}$ and their gradients are exhibited for both schemes and both permeabilities in Figures \ref{fig:convergenceSansFracS-based}-\ref{fig:convergenceSansFracH-based} as functions of the mesh step.
Both the upwind and centred schemes are considered for the thermal convection. Note that the centred schemes are not presented for $\mathbb{K}= 100~\mathbb{I}$ since they fail to provide a solution due to their instability in the convection dominated regime. 
From Figures \ref{fig:convergenceSansFracS-based}-\ref{fig:convergenceSansFracH-based}, we first observe that the discretisations of both models provide a very similar convergence behavior for a given choice of the permeability and of the centred or upwind approximation of the thermal convection terms. 
Regarding the displacement field, second and first order convergence rates are observed in all cases for respectively $\mathbf{u}$ and $\nabla \mathbf{u}$. This is in accordance with the cellwise constant reconstruction $\PiDm$  of the pressure and temperature in the displacement field variational formulation \eqref{Mixed_meca_eq}.
The convergence rates for $p$ and $\nabla p$ are respectively roughly $2$ and $1$ in all cases as could be expected.
On the other hand, the converge rates for $T$ and $\nabla T$ depend on the approximation of the convection term and on the convection--diffusion regime. For equilibrated convection and diffusion, the centred scheme provides a higher convergence rate of order roughly $2$ for $T$ than the upwind scheme of order between $1$ and $2$. An order $1$ is observed on $\nabla T$ for both the centred and upwind schemes. In the convection dominated regime, the upwind scheme exhibits a convergence rate slightly better than $1$ for $T$, and an order between $0.5$ and $1$ for $\nabla T$ slightly better for the entropy-based than for the enthalpy-based model.

\begin{figure}[H]
\centering
{\includegraphics[keepaspectratio=true,scale=.2]{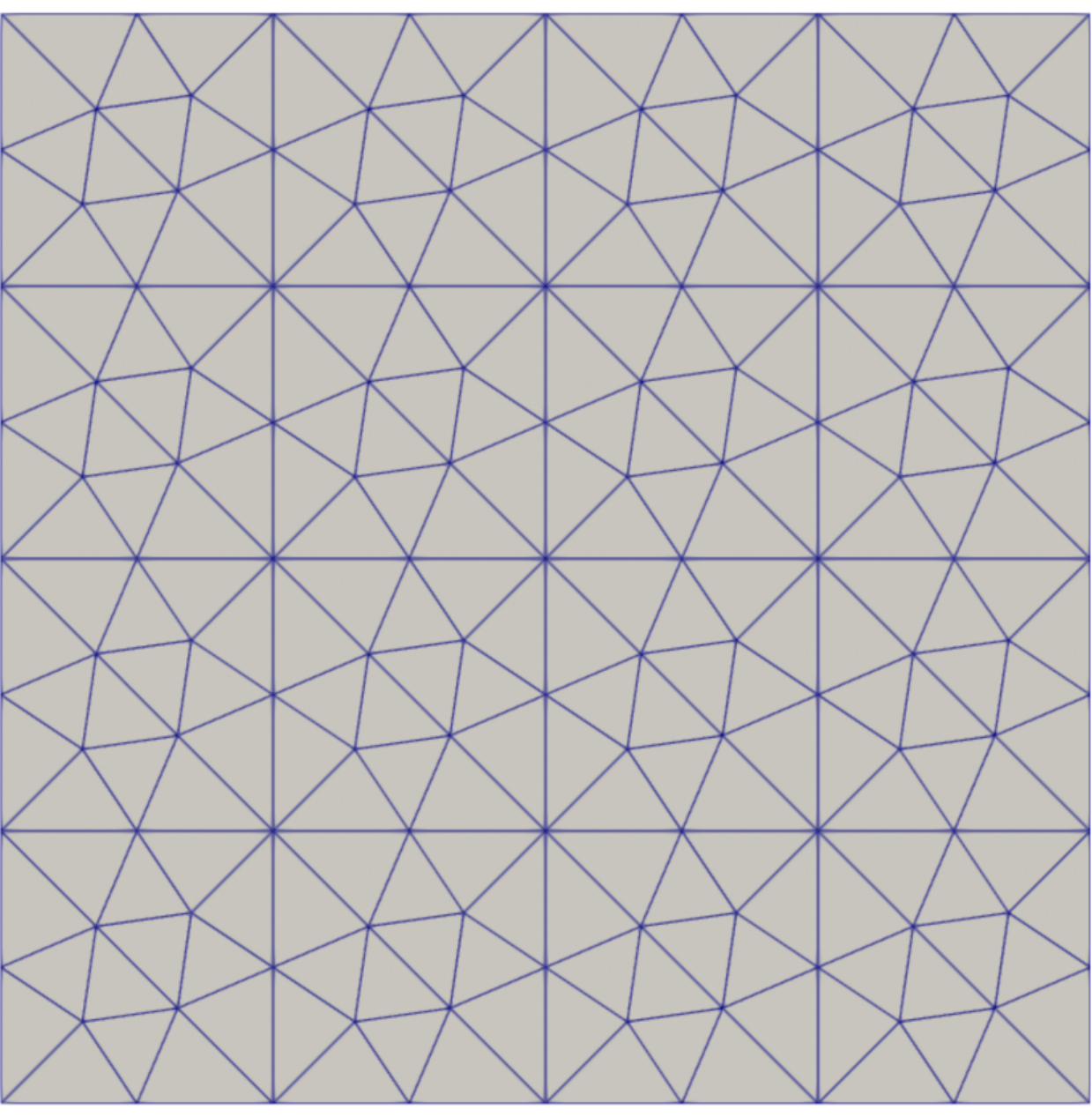}
\caption{Square domain $\Omega$ with its triangular mesh $m=2$ using $56 \times 4 $ cells.}
\label{Fg:Domaine}}
\end{figure}

\begin{table}[H]
 \caption{Material Properties}
\centering
{
 \resizebox{12cm}{!}{
$\begin{array}{llll}
\hline \text { Symbol } & \text { Quantity } & \text { Value } & \text { Unit } \\
\hline E & \text { Young modulus } & 2.5  & \mathrm{~Pa} \\
\nu & \text { Poisson coefficient } & 0.25 & - \\
N & \text { Biot modulus } & 0.25 & \mathrm{~Pa}^{-1} \\
b & \text { Biot coefficient } & 1.0 & - \\
K_s & \text { Bulk modulus } & 2.0 & \mathrm{~Pa} \\
\eta & \text { Fluid viscosity } & 1.0  & \mathrm{~Pa} \mathrm{~s} \\
\phi^0 & \text { Initial porosity} &  4  & -   \\
\Lambda_m & \text { Effective thermal conductivity } & 0.1 & \mathrm{W} \mathrm{~m}^{-1} \mathrm{~K}^{-1} \\
\varrho & \text { The fluid specific density } & 1 & \mathrm{~Kg} \mathrm{~m}^{-3} \\
\alpha_s & \text { The volumetric skeleton thermal dilation coefficient } & 1 &  \mathrm{~K}^{-1} \\
\alpha_\phi & \text { The volumetric thermal dilation coefficient related to the porosity } & 1 & \mathrm{~K}^{-1} \\
\Tref & \text { Reference temperature } & 1 & \mathrm{~K} \\
m_0 & \text { Average fluid skeleton specific density} & 0 & \mathrm{~Kg~m^{-3}} \\
C_s & \text { The skeleton volumetric heat capacity } & 0.5 & \mathrm{~J} \mathrm{~m}^{-3} \mathrm{~K}^{-1} \\
\hline
\end{array}$
}
}
\label{TB1}
\end{table}

\begin{figure}[H]
    \centering
    
    \begin{tikzpicture}[scale=0.8]
        \node at (0,0) { };
        
        \draw[blue, thick, mark=*, mark options={blue}] plot coordinates {(1,0.1) (1.5,0.1)}; \node[right] at (1.5,0) {$p$};
        \draw[red, thick, mark=square*, mark options={red}] plot coordinates {(3,0.1) (3.5,0.1)}; \node[right] at (3.5,0) {$T$};
        \draw[green, thick, mark=triangle*, mark options={green}] plot coordinates {(5,0.1) (5.5,0.1)}; \node[right] at (5.5,0) {$\bu$};
        
        \draw[orange, thick, mark=*, mark options={orange}] plot coordinates {(7,0.1) (7.5,0.1)}; \node[right] at (7.5,0) {$\nabla p$};
        \draw[purple, thick, mark=square*, mark options={purple}] plot coordinates {(9,0.1) (9.5,0.1)}; \node[right] at (9.5,0) {$\nabla T$};
        \draw[brown, thick, mark=triangle*, mark options={brown}] plot coordinates {(11,0.1) (11.5,0.1)}; \node[right] at (11.5,0) {$\nabla \bu$};
    \end{tikzpicture}
    
    \begin{minipage}{0.42\textwidth}
        \centering
        \begin{tikzpicture}[scale=0.8]
            \begin{loglogaxis}[
                title={S-based Upwind scheme - $\mathbb{K} = \mathbb{I}$ },
                ylabel= $L^2$ Relative Errors,
                xlabel= $h$,
            ]
            \addplot[blue, mark=*] table {data/ErrorP-entropy-up.dat};
            \addplot[red, mark=square*] table {data/ErrorT-entropy-up.dat};
            \addplot[green, mark=triangle*] table {data/ErrorU-entropy-up.dat};
            \addplot[orange, mark=*] table {data/ErrordP-entropy-up.dat};
            \addplot[purple, mark=square*] table {data/ErrordT-entropy-up.dat};
            \addplot[brown, mark=triangle*] table {data/ErrordU-entropy-up.dat};
            \logLogSlopeTriangle{0.75}{0.4}{0.07}{1}{black};
            \logLogSlopeTriangle{0.75}{0.4}{0.07}{2}{black};

            \end{loglogaxis}
        \end{tikzpicture}
    \end{minipage}
    \hfill
    \begin{minipage}{0.42\textwidth}
        \centering
        \begin{tikzpicture}[scale=0.8]
            \begin{loglogaxis}[
                title={S-based Centred scheme},
                ylabel= ,
                xlabel= $h$,
            ]
            \addplot[blue, mark=*] table {data/ErrorP-entropy-c.dat};
            \addplot[red, mark=square*] table {data/ErrorT-entropy-c.dat};
            \addplot[green, mark=triangle*] table {data/ErrorU-entropy-c.dat};
            \addplot[orange, mark=*] table {data/ErrordP-entropy-c.dat};
            \addplot[purple, mark=square*] table {data/ErrordT-entropy-c.dat};
            \addplot[brown, mark=triangle*] table {data/ErrordU-entropy-c.dat};
            \logLogSlopeTriangle{0.75}{0.4}{0.07}{1}{black};
            \logLogSlopeTriangle{0.75}{0.4}{0.07}{2}{black};
            \end{loglogaxis}
        \end{tikzpicture}
    \end{minipage}

    \vspace{1em}  

    \begin{minipage}{0.42\textwidth}
        \centering
        \begin{tikzpicture}[scale=0.8]
            \begin{loglogaxis}[
                title={S-based Upwind scheme - $\mathbb{K} = 100 ~\mathbb{I}$  },
                ylabel= $L^2$ Relative Errors,
                xlabel= $h$,
            ]
            \addplot[blue, mark=*] table {data/ErrorP-entropy-up-100.dat};
            \addplot[red, mark=square*] table {data/ErrorT-entropy-up-100.dat};
            \addplot[green, mark=triangle*] table {data/ErrorU-entropy-up-100.dat};
            \addplot[orange, mark=*] table {data/ErrordP-entropy-up-100.dat};
            \addplot[purple, mark=square*] table {data/ErrordT-entropy-up-100.dat};
            \addplot[brown, mark=triangle*] table {data/ErrordU-entropy-up-100.dat};
            \logLogSlopeTriangle{0.75}{0.4}{0.07}{1}{black};
            \logLogSlopeTriangle{0.75}{0.4}{0.07}{2}{black};

            \end{loglogaxis}
        \end{tikzpicture}
    \end{minipage}

   \caption{Convergence of the relative $L^2$ errors for the temperature $T$, pressure $p$, and displacement $\bu$ and their gradients for the  discretisation of the \eqref{model:entropy}  model and both the centred (for $\mathbb{K} = \mathbb{I}$) and upwind (for $\mathbb{K} = \mathbb{I},100 ~\mathbb{I}$) schemes.}

  \label{fig:convergenceSansFracS-based}
\end{figure}

\begin{figure}[H]
    \centering
    
    \begin{tikzpicture}[scale=0.8]
        \node at (0,0) { };
        
        \draw[blue, thick, mark=*, mark options={blue}] plot coordinates {(1,0.1) (1.5,0.1)}; \node[right] at (1.5,0) {$p$};
        \draw[red, thick, mark=square*, mark options={red}] plot coordinates {(3,0.1) (3.5,0.1)}; \node[right] at (3.5,0) {$T$};
        \draw[green, thick, mark=triangle*, mark options={green}] plot coordinates {(5,0.1) (5.5,0.1)}; \node[right] at (5.5,0) {$\bu$};
        
        \draw[orange, thick, mark=*, mark options={orange}] plot coordinates {(7,0.1) (7.5,0.1)}; \node[right] at (7.5,0) {$\nabla p$};
        \draw[purple, thick, mark=square*, mark options={purple}] plot coordinates {(9,0.1) (9.5,0.1)}; \node[right] at (9.5,0) {$\nabla T$};
        \draw[brown, thick, mark=triangle*, mark options={brown}] plot coordinates {(11,0.1) (11.5,0.1)}; \node[right] at (11.5,0) {$\nabla \bu$};
    \end{tikzpicture}
    
    \begin{minipage}{0.48\textwidth}
        \centering
        \begin{tikzpicture}[scale=0.8]
            \begin{loglogaxis}[
                title={H-based Upwind scheme - $\mathbb{K} = \mathbb{I}$},
                ylabel= $L^2$ Relative Errors,
                xlabel= $h$,
            ]
            \addplot[blue, mark=*] table {data/ErrorP-enthalpy-up.dat};
            \addplot[red, mark=square*] table {data/ErrorT-enthalpy-up.dat};
            \addplot[green, mark=triangle*] table {data/ErrorU-enthalpy-up.dat};
            \addplot[orange, mark=*] table {data/ErrordP-enthalpy-up.dat};
            \addplot[purple, mark=square*] table {data/ErrordT-enthalpy-up.dat};
            \addplot[brown, mark=triangle*] table {data/ErrordU-enthalpy-up.dat};
            \logLogSlopeTriangle{0.75}{0.4}{0.07}{1}{black};
            \logLogSlopeTriangle{0.75}{0.4}{0.07}{2}{black};

            \end{loglogaxis}
        \end{tikzpicture}
    \end{minipage}
    \hfill
    \begin{minipage}{0.48\textwidth}
        \centering
        \begin{tikzpicture}[scale=0.8]
            \begin{loglogaxis}[
                title={H-based Centred scheme },
                ylabel= $L^2$ Relative Error,
                xlabel= $h$,
            ]
            \addplot[blue, mark=*] table {data/ErrorP-enthalpy-c.dat};
            \addplot[red, mark=square*] table {data/ErrorT-enthalpy-c.dat};
            \addplot[green, mark=triangle*] table {data/ErrorU-enthalpy-c.dat};
            \addplot[orange, mark=*] table {data/ErrordP-enthalpy-c.dat};
            \addplot[purple, mark=square*] table {data/ErrordT-enthalpy-c.dat};
            \addplot[brown, mark=triangle*] table {data/ErrordU-enthalpy-c.dat};
            \logLogSlopeTriangle{0.75}{0.4}{0.07}{1}{black};
            \logLogSlopeTriangle{0.75}{0.4}{0.07}{2}{black};

            \end{loglogaxis}
        \end{tikzpicture}
    \end{minipage}

    \vspace{1em}  

    \begin{minipage}{0.48\textwidth}
        \centering
        \begin{tikzpicture}[scale=0.8]
            \begin{loglogaxis}[
                title={H-based Upwind scheme - $\mathbb{K} = 100 ~\mathbb{I}$  },
                ylabel= $L^2$ Relative Errors,
                xlabel= $h$,
            ]
            \addplot[blue, mark=*] table {data/ErrorP-enthalpy-up-100.dat};
            \addplot[red, mark=square*] table {data/ErrorT-enthalpy-up-100.dat};
            \addplot[green, mark=triangle*] table {data/ErrorU-enthalpy-up-100.dat};
            \addplot[orange, mark=*] table {data/ErrordP-enthalpy-up-100.dat};
            \addplot[purple, mark=square*] table {data/ErrordT-enthalpy-up-100.dat};
            \addplot[brown, mark=triangle*] table {data/ErrordU-enthalpy-up-100.dat};
            \logLogSlopeTriangle{0.75}{0.4}{0.07}{1}{black};
            \logLogSlopeTriangle{0.75}{0.4}{0.07}{2}{black};

            \end{loglogaxis} 
        \end{tikzpicture}
    \end{minipage}

    \caption{Convergence of the relative $L^2$ errors for the temperature $T$, pressure $p$, and displacement $\bu$ and their gradients for the  discretisation of the \eqref{model:enthalpy} model and both the centred (for $\mathbb{K} = \mathbb{I}$) and upwind (for $\mathbb{K} = \mathbb{I},100 ~\mathbb{I}$) schemes.}

   \label{fig:convergenceSansFracH-based}
\end{figure}
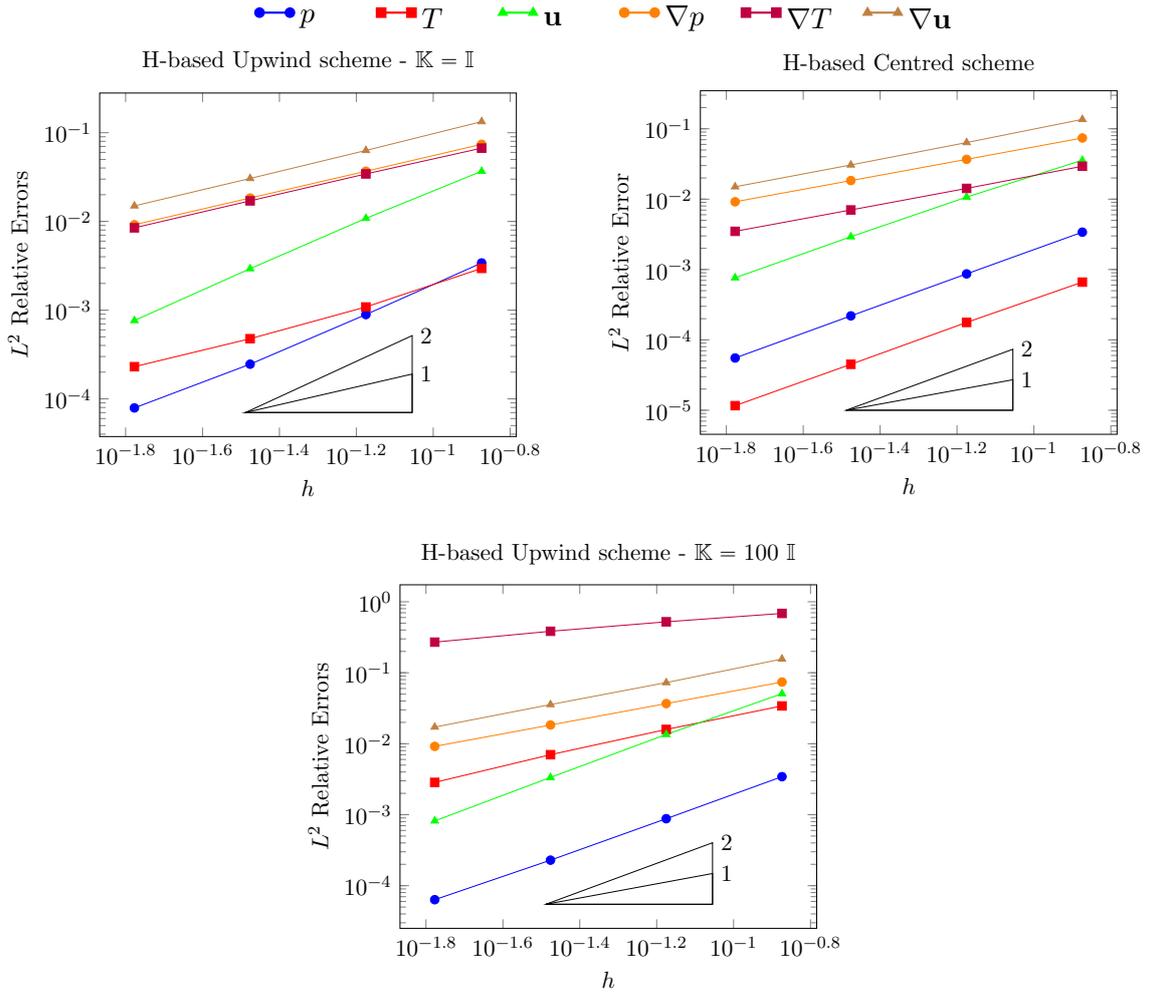

\subsection{2D Discrete Fracture Matrix (DFM) model}\label{sec:testdfm}

We consider the DFM model exhibited in Figure \ref{fig:fractures-net}  and introduced in \cite{contact-norvegiens}. It is defined on the domain $\Omega = (0,1)\times (0,2)\,\mathrm{m}^2$ and includes a network $\Gamma$ of six fractures, among which fracture $1$ made up of two sub-fractures forming a corner,  nearly intersecting fractures and the non-immersed fracture $5$ with one tip lying on the left boundary. 
The setting of the simulation follows the test case proposed in \cite{STEFANSSON2021114122} and consists in three stages on the time intervals $I_1 = (0, t^{(1)})$, $I_2 = [t^{(1)}, t^{(2)})$ and $I_3 = [t^{(2)}, t_F]$. These three stages are monitored by the boundary conditions triggering respectively mechanical, hydraulic and thermal driving forces. As exhibited in Figure \ref{fig:3stages}, the displacement $\bu=\prescript{t}{}(5,-2) \,10^{-4}\,\mathrm{m}$ is imposed at the top boundary for $t>0$. At the left boundary, a high pressure $p=8.~10^{6} \, \mathrm{Pa}$ is set for $t\geq t^{(1)}$, and a low temperature $T=285\, \mathrm{K}$ is prescribed for $t\geq t^{(2)}$.

The initial pressure and temperature are fixed to 
$p^0 = 10^5 \, \mathrm{Pa}$ and $T^0 = 300\, \mathrm{K}$. The initial displacement field $\bu^0$ is computed by solving the discrete contact-mechanical problem \eqref{Mixed_meca_eq} at given $p^0,T^0$ with a zero displacement $\bu=\mathbf{0}$ at the bottom and top boundaries and a free boundary condition at the left and right sides.
Throughout the simulation, the top and bottom boundaries are assumed impervious with zero heat flux while the right boundary condition imposes a fixed pressure $p=10^5 \, \mathrm{Pa}$ and temperature $T=300\, \mathrm{K}$. A zero displacement $\bu = \mathbf{0}$ is prescribed at the bottom boundary and the left and right sides are kept with homogeneous total stress conditions.

In order to compare the \eqref{model:enthalpy} and \eqref{model:entropy} models and their discretisations on different fluid behaviors, we consider in the following two cases corresponding first to a slightly compressible liquid and second to a perfect gas. The simulation parameters, not depending on the fluid thermodynamical model, are reported in Table \ref{TB2} together with $\alpha_\phi = (b - \phi^0) \alpha_s$, $K_s = \lambda + \mu$, $N= {K_s \over (b-\phi^0) (1-b)}$. The times $t^{(1)}$,  $t^{(2)}$,  $t_F$ will be fixed according to the fluid thermodynamical model in such a way that a stationary state is roughly reached at the end of each stage.

The simulations are performed on a family of 4 uniformly refined meshes indexed by $m\in \{0,\cdots,3\}$ with 
$2855\times 4^m$ triangular cells and $88\times 2^m$ fracture faces. The finest mesh $m=3$ is used for the reference solution in the numerical convergence investigations. Due to the thermal convection dominated regime during stage 3, the convection terms are upwinded for the discretisation of both models.

\begin{figure}[!htb]
    \centering
    \includegraphics[width= 7cm]{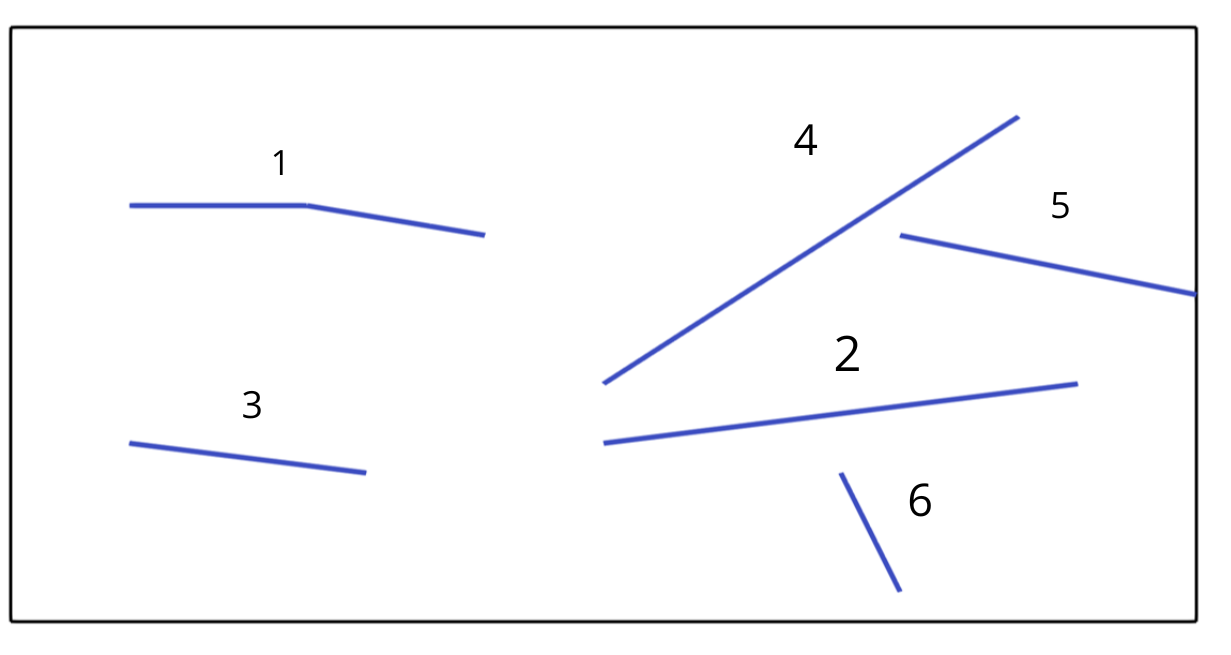}
    \caption{Two-dimensional domain $\Omega = (0,1) \times ( 0,2) \,\mathrm{m}^2$ including the six fracture network $\Gamma$.}
    \label{fig:fractures-net}
\end{figure}

\begin{table}[H]
 \caption{Material Properties common to the liquid and gas test cases.}
\centering
{
 \resizebox{12cm}{!}{
$\begin{array}{llll}
\hline \text { Symbol } & \text { Quantity } & \text { Value } & \text { Unit } \\
\hline E & \text { Young modulus } & 40   & \mathrm{~GPa} \\
\nu & \text { Poisson coefficient } & 0.15 & - \\
F & \text { Friction coefficient } & 0.5 & - \\
b & \text { Biot coefficient } & 0.65 & - \\
 \mathbb{K} & \text{Permeability coefficient} & \begin{pmatrix} 1 & 0 \\ 0 & 0.5 \end{pmatrix} \times 10^{-15} & \mathrm{m}^2 \\
 \phi^0 & \text { Initial porosity} &  0.1  & -   \\
 d_0 & \text { Contact aperture} &  5.~10^{-4}  & \mathrm{~m}   \\
\Lambda_m & \text { Effective thermal conductivity } & 2 & \mathrm{W} \mathrm{~m}^{-1} \mathrm{~K}^{-1} \\
\alpha_s & \text { The volumetric skeleton thermal dilation coefficient } & 1.5~10^{-5} &  \mathrm{~K}^{-1} \\
m_0 & \text { Average fluid skeleton specific density} & 0 & \mathrm{~Kg~m^{-3}} \\
C_s & \text { The skeleton volumetric heat capacity } & 2 & \mathrm{~MJ} \mathrm{~m}^{-3} \mathrm{~K}^{-1} \\
\hline
\end{array}$
}
}
\label{TB2}
\end{table}

\begin{figure}[H]
    \centering
    
    \begin{minipage}[b]{\linewidth}
     \centering
      (a) Stage 1 \hskip 5cm  (b) Stage 2 \\
     \includegraphics[width=0.45\linewidth]{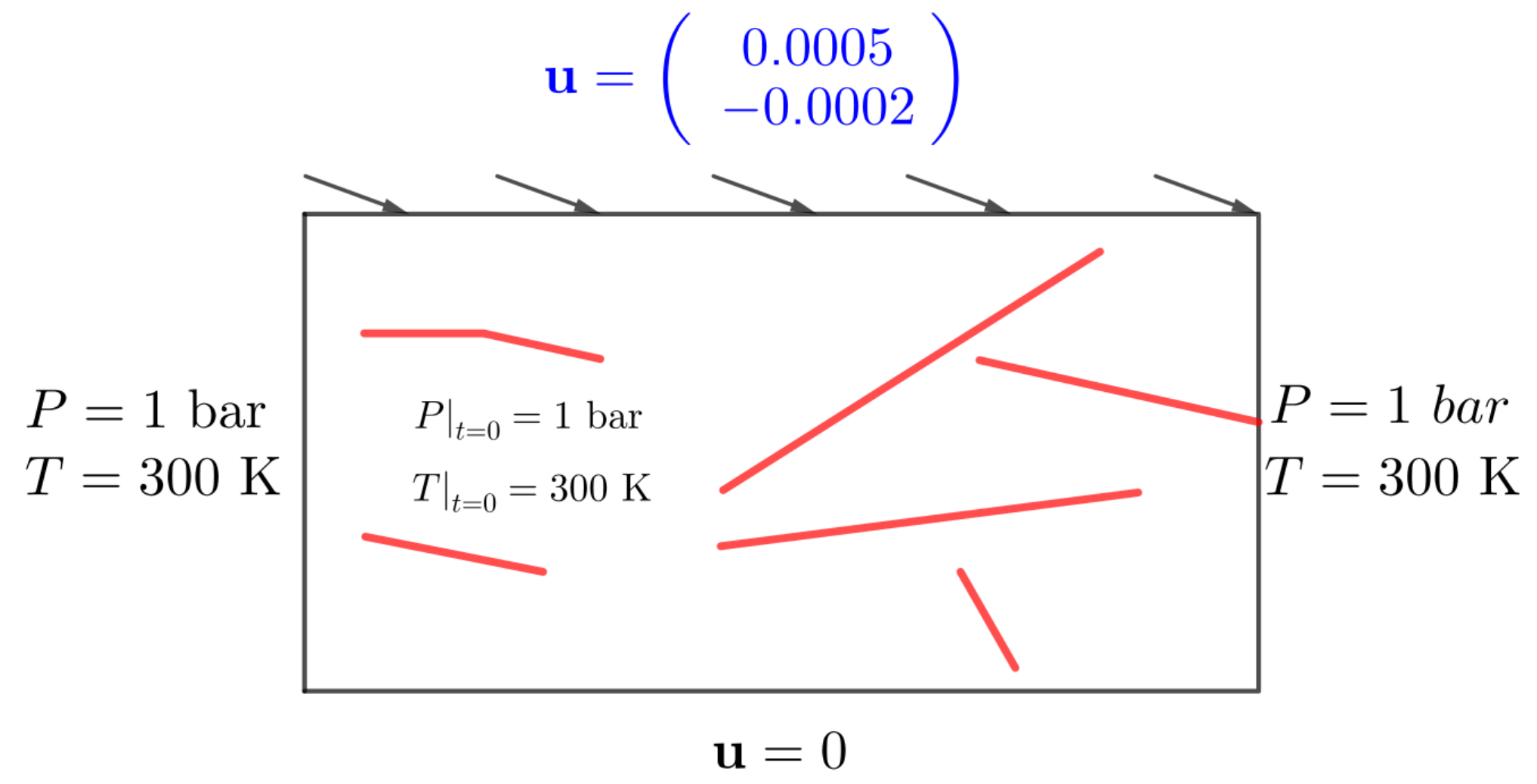}\hskip 0.5cm 
     \includegraphics[width=0.45\linewidth]{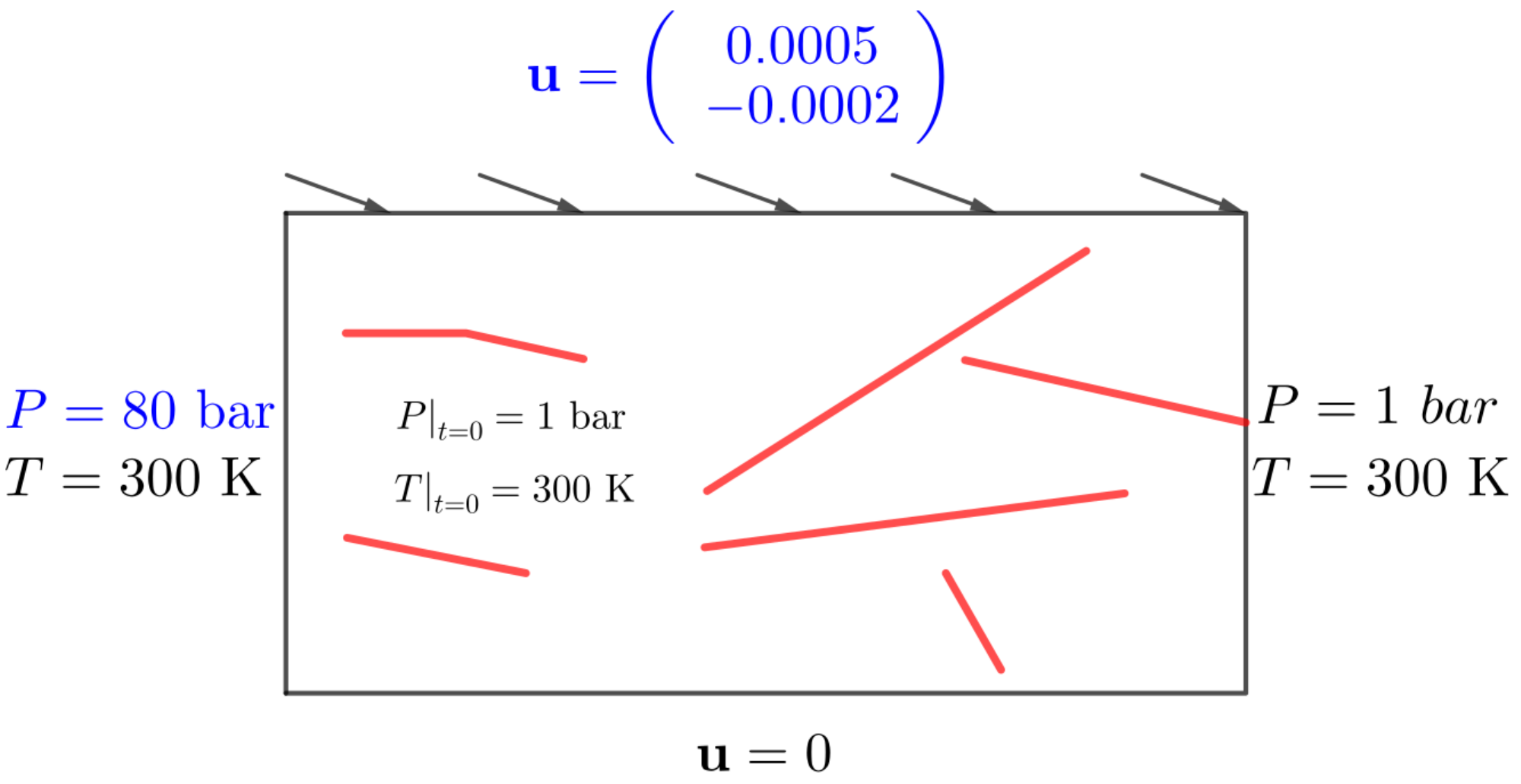}
  \end{minipage}
    \begin{minipage}[b]{\linewidth}
      \centering
      \vskip -0.25cm 
        (c) Stage 3 \\
        \includegraphics[width=0.45\linewidth]{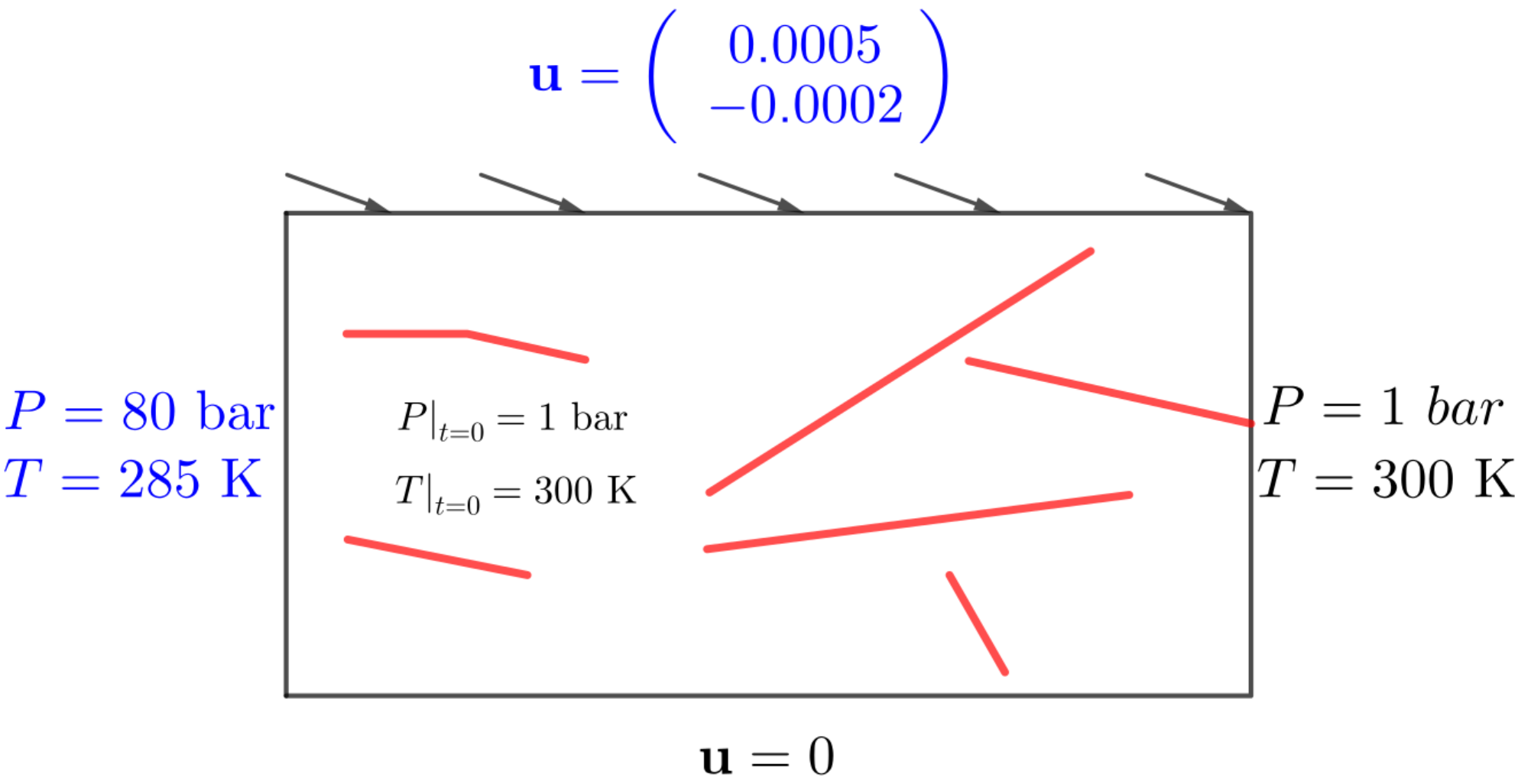}
    \end{minipage}

    \caption{Set up of the simulation in terms of initial condition at time $t=0$ and of boundary conditions during each of the three stages corresponding to the time intervals $I_1 = (0, t^{(1)})$, $I_2 = [t^{(1)}, t^{(2)})$ and $I_3 = [t^{(2)}, t_F]$. Here $1$ bar corresponds to $10^5$ Pa.}
    \label{fig:3stages}
\end{figure}

\subsubsection{Weakly compressible liquid case} \label{sec:testliquid}

We consider a weakly compressible liquid with thermodynamical constitutive laws deriving from a free enthalpy potential. It is characterised by its specific density $\varrho(p,T)$ such that 
$$
{\rhoref \over \varrho(p,T) } = 1 - \frac{(p-\pref)}{K_f}+ \alpha_f (T-\Tref),
$$
and its specific internal energy  
$$
e(p,T) = C_f T  - {\alpha_f \over \rhoref} \( (p-\pref)\Tref + p (T-\Tref) \) + {(p^2 - \pref^2)\over 2 \rhoref K_f},
$$
given the parameters $\Tref = 300$ K, $\pref = 10^5$ Pa, $\rhoref = 10^3$ Kg.m$^{-3}$, $K_f = 2.18$ GPa, $\alpha_f = 2.07 ~  10^{-4} K^{-1}$, and $C_f = 4180$ J.Kg$^{-1}$.K$^{-1}$. The fluid viscosity is set to $\eta= 10^{-3} \mathrm{~Pa} \mathrm{~s}$.  
The time intervals for each of the three stages are given by $t^{(1)} = 100$ s, $t^{(2)} = 200$ s and $t_F = 5$ days. The time stepping is defined  by a small initial time step of $0.1$ s in order to capture the undrained regime time scale at the beginning of stage 1 and by the maximum time steps of $5$ s for stage 1 and 2 and of $0.1$ day for stage 3.

Figure \ref{fig:contactstateLiquid} exhibits the evolution of the contact state (open, contact stick or contact slip) along the fractures at different times during the three stages. At time $t>0$, due to the imposed displacement at the top, most of the fractures switch from open to contact. During the undrained regime, at the very beginning of stage 1, the high increase of the pressure (see Figure \ref{fig:3stagessolutions} (a)) induces a slip state for most of the fractures in contact, as a result of the reduction of the normal surface traction. Toward the end of stage 1, these fractures switch back to stick state due to the pressure relaxation (see the evolution of the mean pressure in Figure \ref{fig:meanVARIATIONSLiquid} (a)). During stage 2, fractures 1 and 3 switch back to slip state as a result of the high pressure front propagation (see Figure \ref{fig:3stagessolutions} (b) and \ref{fig:meanVARIATIONSLiquid} (a)) while they partially or totally open during the cold temperature front propagation in stage 3 (see Figures \ref{fig:3stagessolutions} (c) and \ref{fig:meanVARIATIONSLiquid} (b)) as a result of the matrix shrinkage.

\begin{figure}[H]
    \centering
    
    \begin{minipage}[b]{\linewidth}
     \centering
      (a) Stage 1 \hskip 5cm (b) Stage 2 \\
     \includegraphics[width=0.45\linewidth]{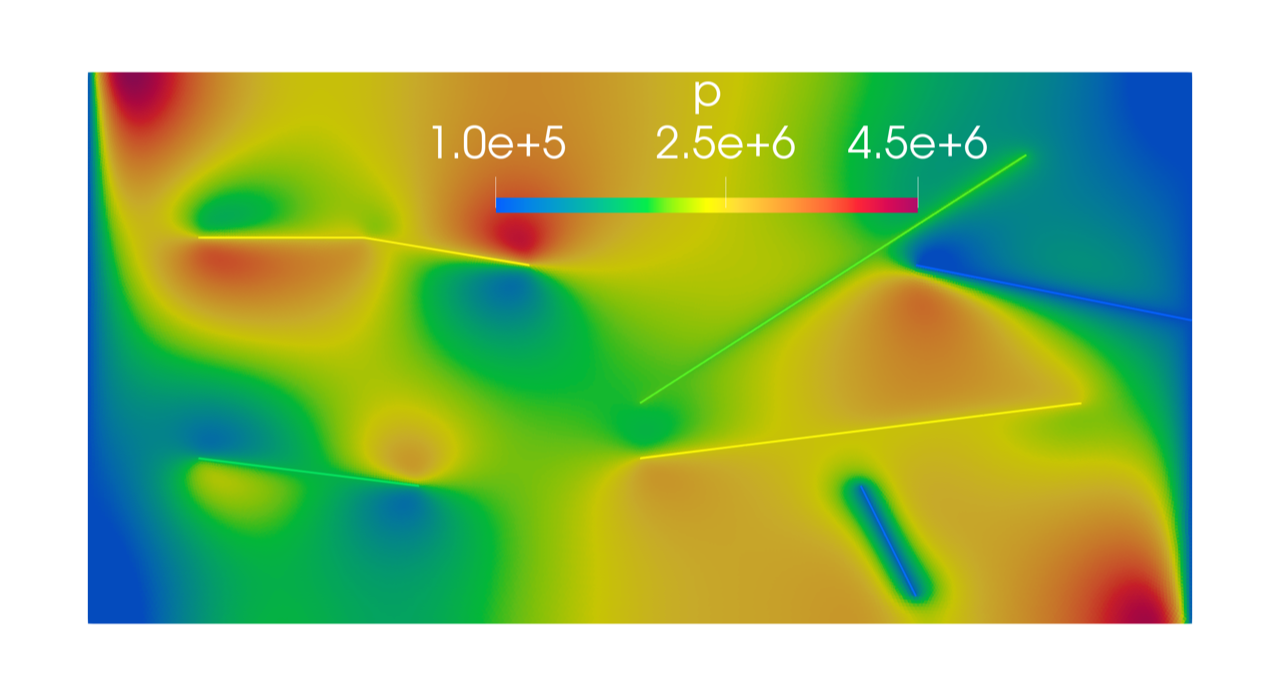}
    \includegraphics[width=0.45\linewidth]{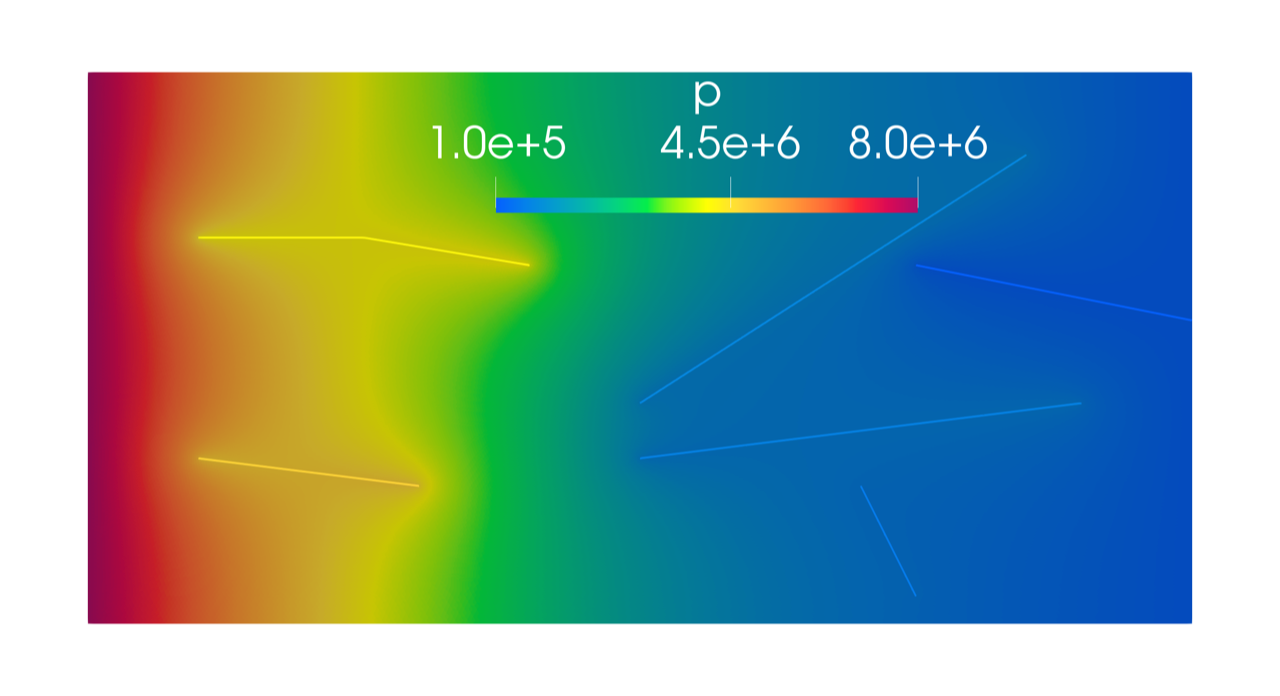}
  \end{minipage}
    \begin{minipage}[b]{\linewidth}
    \centering
        (c) Stage 3 \\
        \includegraphics[width=0.45\linewidth]{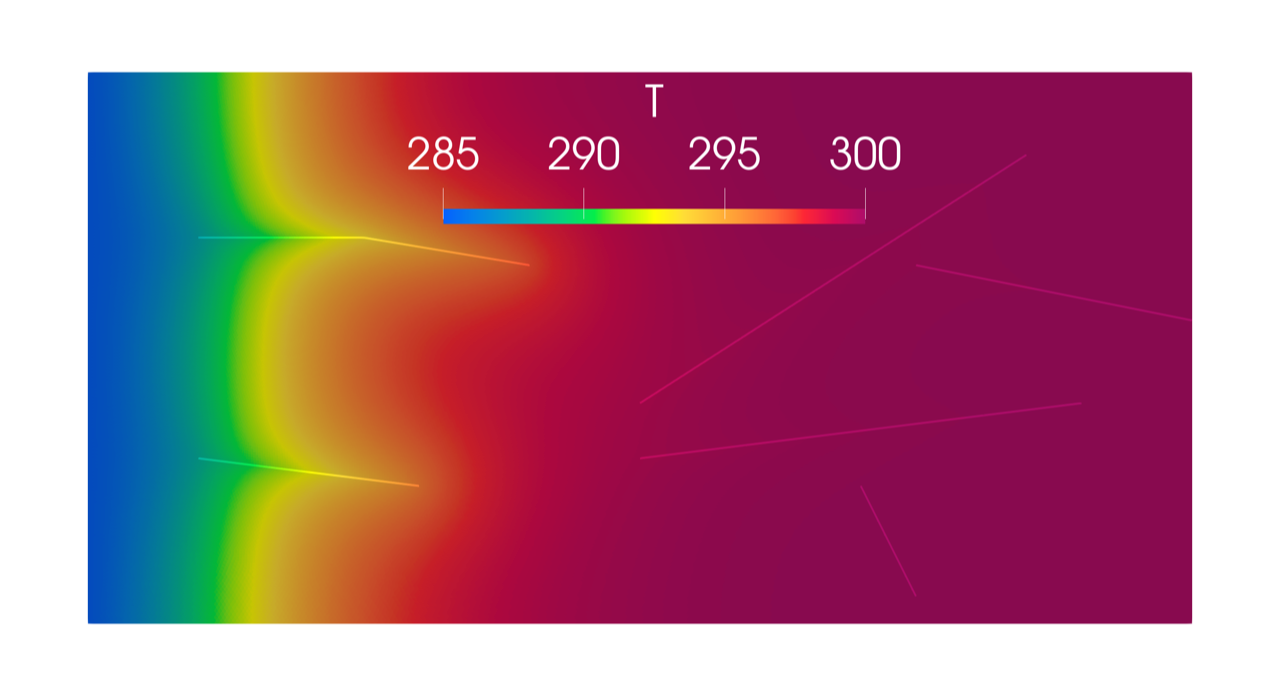}
    \end{minipage}

    \caption{Discrete solution obtained with the enthalpy-based model on the finest mesh $m=3$: (a) pressure $p$ at the very beginning of stage 1 ($t=0.1$ s), (b) pressure $p$ at $t=116$ s during stage 2, (c) temperature $T$ at $t=27715$ s  time during stage 3.}
    \label{fig:3stagessolutions}
\end{figure}


\begin{figure}[!htb]
    \centering
        \begin{minipage}[b]{0.45\linewidth}
        \centering
        $\text {(a) Initial state }(p=1 ~\mathrm{bar}, T=300 \mathrm{~K})$ \\[0.25cm]
        \includegraphics[width=.8\linewidth]{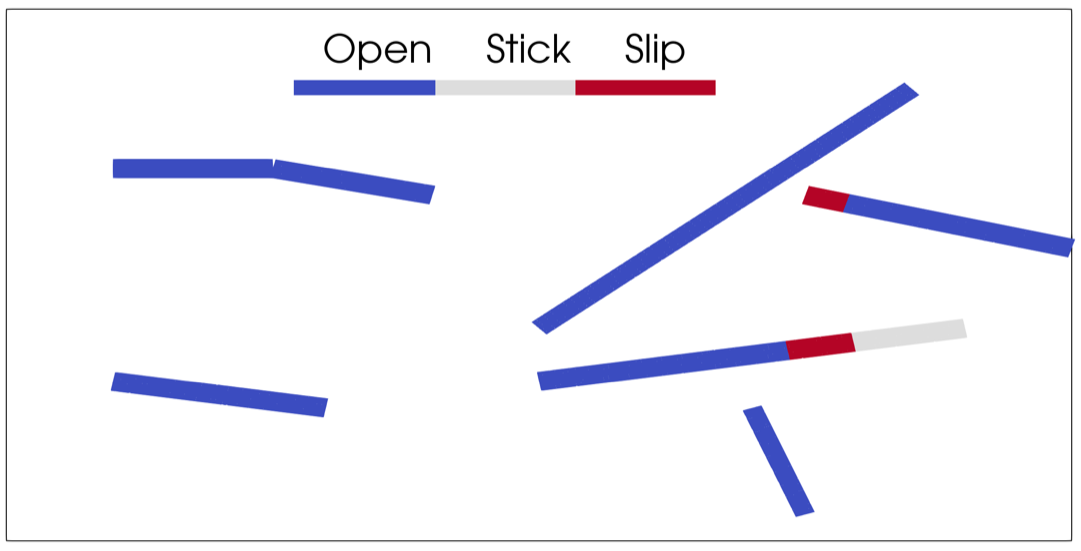}
         \end{minipage}

     \vspace{0.5cm}
        \begin{minipage}[b]{0.45\linewidth}
     \centering
        (b) Beginning of Stage 1 \\[0.25cm]
        \includegraphics[width=.8\linewidth]{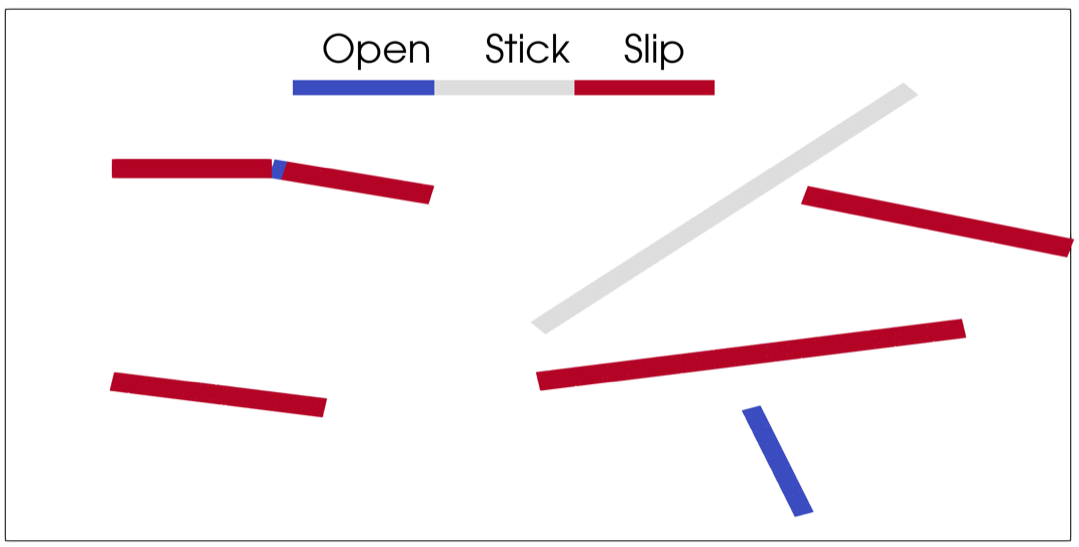}
  \end{minipage}
    \hfill
    \begin{minipage}[b]{0.45\linewidth}
      \centering
       (c) End of stage 1 \\[0.25cm]
        \includegraphics[width=.8\linewidth]{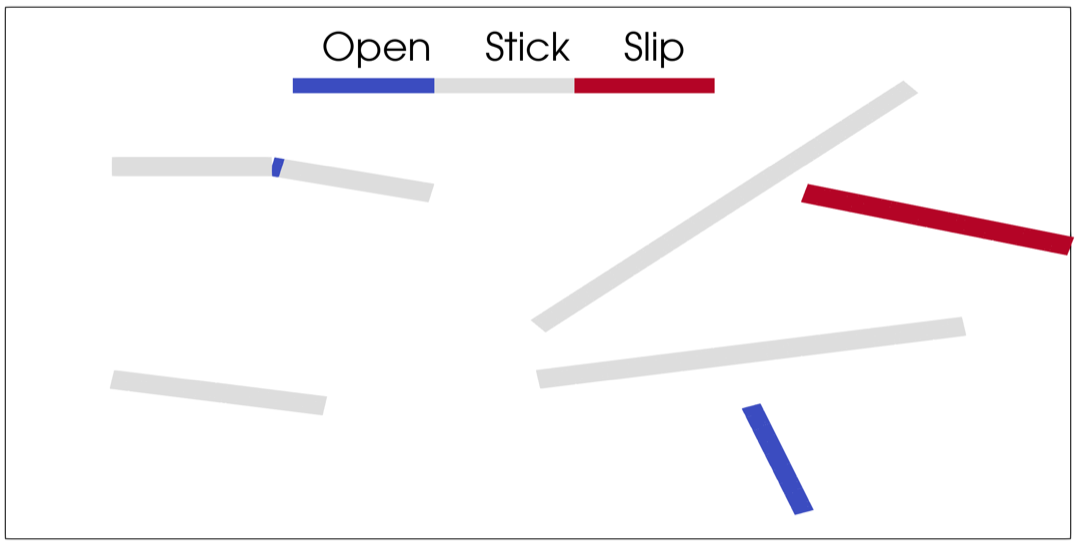}
    \end{minipage}

    \vspace{0.5cm}

    \begin{minipage}[b]{0.45\linewidth}
    \centering
        (d) Stage 2 \\[0.25cm]
        \includegraphics[width=.8\linewidth]{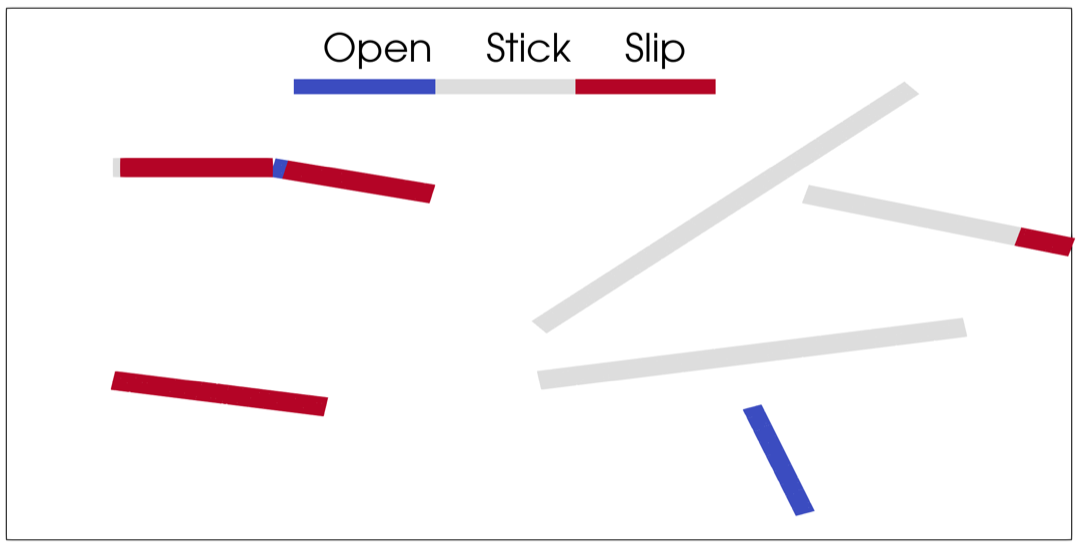}
    \end{minipage}
    \hfill
      \begin{minipage}[b]{0.45\linewidth}
      \centering
       (e) Stage 3 \\[0.25cm]
        \includegraphics[width=.8\linewidth]{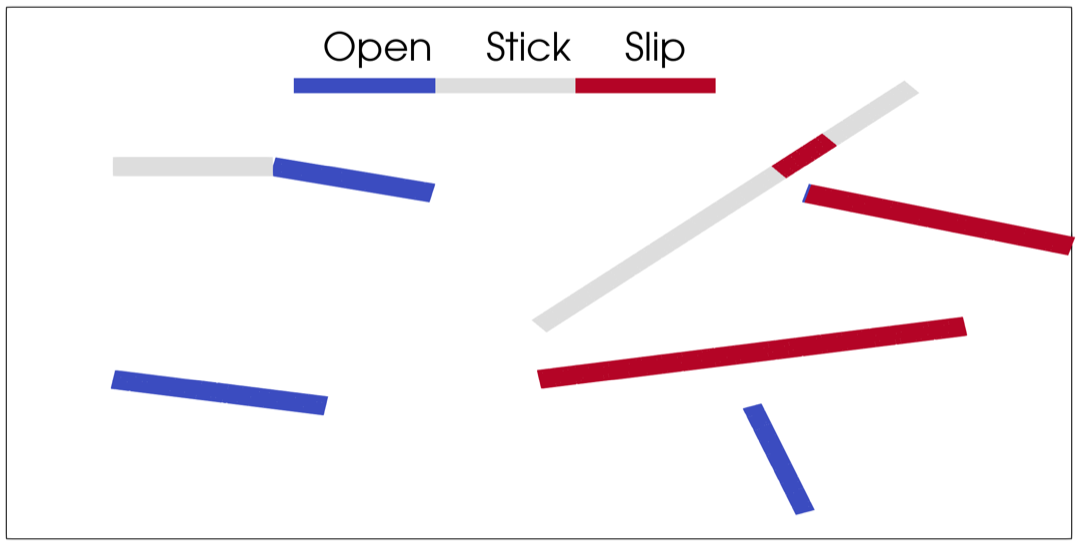}
    \end{minipage}

    \caption{Contact state (open, contact stick, contact slip) along the fractures at different times for the discrete solution on the finest mesh $m=3$ of the enthalpy-based model.}

    \label{fig:contactstateLiquid}
\end{figure}


Figure \ref{fig:meanVARIATIONSLiquid} compares the evolution in time of mean in space variables ($p$, $p_f$, $T$, $T_f$, and scaled $d_f-d_0$, $\jump{\bu}_\tau$, and $\phi-\phi^0$) for the entropy-based and enthalpy-based discrete solutions obtained on the finest mesh $m=3$. A very good match is observed on all variables with only small differences during stage 3 which has been checked to result from the neglected terms $\bV\cdot \nabla p$ in the approximate entropy equations of the \eqref{model:entropy} model. When these terms are added back, the discrete solutions of both models fully match. Note that the linearisation of the Fourier term is not a significant source of discrepancy for this test case due to the thermal convection dominated regime.

Figure \ref{fig:convergenceratesLiquid} compares, for the discrete solutions of both models, the convergence of the $L^2$ errors in time of mean in space variables as functions of the mesh step $h$ (1/total number of fracture faces). The errors are computed w.r.t.~the fine mesh $m=3$ reference solutions obtained using the same time stepping and the same model.  The observed convergence behaviors of the discretisations of the two models are very similar, with a rate of convergence roughly  equal to $1.5$.


\begin{figure}[h]
    \centering
    
    \begin{minipage}{0.45\textwidth}
        \centering
        \begin{tikzpicture}[scale=0.8]
                   \begin{axis}[
            title={},
            xlabel={Time (s)},
            ylabel={mean Pressure},
            grid=major,
            xmode=log, 
           legend pos=south east
        ]
        
        \addplot+[no marks] table[x index=0, y index=2, col sep=space] {data/Tm_Pm_Phim.txt};
        \addlegendentry{H-$p$};
        
        \addplot+[no marks] table[x index=0, y index=2, col sep=space] {data/df_pf_sf_slip.txt};
        \addlegendentry{H-$p_f$};
        
        \addplot+[dashed, no marks, color=black] table[x index=0, y index=2, col sep=space] {data/Tm_Pm_Phim_S.txt};
        \addlegendentry{S-$p$};
        
        \addplot+[dashed, no marks] table[x index=0, y index=2, col sep=space] {data/df_pf_sf_slip_S.txt};
        \addlegendentry{S-$p_f$};
        
        \end{axis}
        \end{tikzpicture}
        (a) 
    \end{minipage}
    \hfill
    \begin{minipage}{0.45\textwidth}
      \centering
      \vskip 0.5cm 
        \begin{tikzpicture}[scale=0.8]
               \begin{axis}[
            title={},
            xlabel={Time (s)},
            ylabel={mean Temperatures},
            grid=major,
            xmode=log, 
            legend pos=south west 
        ]
        
        \addplot+[no marks] table[x index=0, y index=1, col sep=space] {data/Tm_Pm_Phim.txt};
        \addlegendentry{H-$T$};
        
        \addplot+[no marks] table[x index=0, y index=3, col sep=space] {data/df_pf_sf_slip.txt};
        \addlegendentry{H-$T_f$};
        
        \addplot+[dashed, no marks, color=green] table[x index=0, y index=1, col sep=space] {data/Tm_Pm_Phim_S.txt};
        \addlegendentry{S-$T$};
        
        \addplot+[dashed, no marks] table[x index=0, y index=3, col sep=space] {data/df_pf_sf_slip_S.txt};
        \addlegendentry{S-$T_f$};
        
        \end{axis}
        \end{tikzpicture}
        (b) 
    \end{minipage}

    \vspace{1em}  

    \begin{minipage}{0.45\textwidth}
        \centering
        \begin{tikzpicture}[scale=0.8]
                \begin{axis}[
            title={},
            xlabel={Time (s)},
            ylabel={Scaled mean variations},
            grid=major,
            xmode=log, 
            legend pos=outer north east,
        ]
        
        \addplot+[no marks] table[x index=0, y index=1, col sep=space] {data/dfmd0_mesh3.txt};
        \addlegendentry{H-$d_f$};
        
        \addplot+[no marks] table[x index=0, y index=1, col sep=space] {data/slipf_mesh3.txt};
        \addlegendentry{H-$\llbracket \bu \rrbracket_\tau$};

        \addplot+[no marks] table[x index=0, y index=1, col sep=space] {data/dphim_mesh3.txt};
        \addlegendentry{H-$\phi$};
        
        \addplot+[dashed, no marks, color=green] table[x index=0, y index=1, col sep=space] {data/dfmd0_mesh3_S.txt};
        \addlegendentry{S-$d_f$};
        
        \addplot+[dashed, no marks] table[x index=0, y index=1, col sep=space] {data/slipf_mesh3_S.txt};
        \addlegendentry{S-$\llbracket \bu \rrbracket_\tau$};

        \addplot+[dashed, no marks] table[x index=0, y index=1, col sep=space] {data/dphim_mesh3_S.txt};
        \addlegendentry{S-$\phi$};

        \end{axis}
        \end{tikzpicture}
        (c)        
    \end{minipage}
\caption{(a) Matrix and fracture mean pressures, (b) matrix and fracture mean temperatures, and (c) scaled mean $\jump{\bu}_\tau$, $d_f-d_0$ and  $\phi-\phi^0$ as functions of time for the solutions on the finest mesh $m=3$, for the discretisations of the enthalpy-based  (H) and entropy-based (S) models. }
        \label{fig:meanVARIATIONSLiquid}    
\end{figure}

\begin{figure}[H]
    \centering

    \begin{tikzpicture}[scale=0.8]
        \node at (0,0) { };
        
        \draw[blue, thick, mark=*, mark options={blue}] plot coordinates {(1,0.1) (1.5,0.1)}; \node[right] at (1.5,0) {$p$};
        \draw[red, thick, mark=square*, mark options={red}] plot coordinates {(3,0.1) (3.5,0.1)}; \node[right] at (3.5,0) {$T$};
        \draw[green, thick, mark=triangle*, mark options={green}] plot coordinates {(5,0.1) (5.5,0.1)}; \node[right] at (5.5,0) {$\phi$};
        
        \draw[orange, thick, mark=*, mark options={orange}] plot coordinates {(7,0.1) (7.5,0.1)}; \node[right] at (7.5,0) {$d_f$};
        \draw[purple, thick, mark=square*, mark options={purple}] plot coordinates {(9,0.1) (9.5,0.1)}; \node[right] at (9.5,0) {$\llbracket \bu \rrbracket_\tau$};
        \draw[brown, thick, mark=triangle*, mark options={brown}] plot coordinates {(11,0.1) (11.5,0.1)}; \node[right] at (11.5,0) {$p_f$};
        \draw[violet, thick, mark=diamond*, mark options={violet}] plot coordinates {(13,0.1) (13.5,0.1)}; \node[right] at (13.5,0) {$T_f$};
    \end{tikzpicture}
    
    \begin{minipage}{0.48\textwidth}
        \begin{tikzpicture}[scale=0.8]
            \begin{loglogaxis}[
                title={Convergence Rates},
                xlabel={$\mathrm{h} \text { (1/total number of fracture faces) }$},
                ylabel={$L^2$ Error},
                grid=major
            ]
            \addplot[blue, mark=*] table[x index=0,y index=1,col sep=space] {data/convh_m.txt};
            \addplot[red, mark=square*] table[x index=0,y index=2,col sep=space] {data/convh_m.txt};
            \addplot[green, mark=triangle*] table[x index=0,y index=3,col sep=space] {data/convh_m.txt};
            \addplot[dashed, blue, mark=*] table[x index=0,y index=1,col sep=space] {data/convh_m_S.txt};
            \addplot[dashed, red, mark=square*] table[x index=0,y index=2,col sep=space] {data/convh_m_S.txt};
            \addplot[dashed, green, mark=triangle*] table[x index=0,y index=3,col sep=space] {data/convh_m_S.txt};
            
            \logLogSlopeTriangle{0.75}{0.3}{0.07}{1.5}{black};
            \end{loglogaxis}

        \end{tikzpicture}
    \end{minipage}
    \hfill
    \begin{minipage}{0.48\textwidth}
        \begin{tikzpicture}[scale=0.8]
            \begin{loglogaxis}[
                title={Convergence Rates},
                xlabel={$\mathrm{h} \text { (1/total number of fracture faces) }$},
                ylabel={},
                grid=major
            ]
            \addplot[orange, mark=*] table[x index=0,y index=1,col sep=space] {data/convh_f.txt};
            \addplot[purple, mark=square*] table[x index=0,y index=2,col sep=space] {data/convh_f.txt};
            \addplot[brown, mark=triangle*] table[x index=0,y index=3,col sep=space] {data/convh_f.txt};
            \addplot[violet, mark=diamond*] table[x index=0,y index=4,col sep=space] {data/convh_f.txt};

            \addplot[dashed, orange, mark=*] table[x index=0,y index=1,col sep=space] {data/convh_f_S.txt};
            \addplot[dashed, purple, mark=square*] table[x index=0,y index=2,col sep=space] {data/convh_f_S.txt};
            \addplot[dashed, brown, mark=triangle*] table[x index=0,y index=3,col sep=space] {data/convh_f_S.txt};
            \addplot[dashed, violet, mark=diamond*] table[x index=0,y index=4,col sep=space] {data/convh_f_S.txt};
            \logLogSlopeTriangle{0.75}{0.3}{0.07}{1.5}{black};
            \end{loglogaxis}
        \end{tikzpicture}
    \end{minipage}
    \caption{Relative $L^2$ in time errors vs.~the mesh step $h$ (1/total number of fracture faces) of mean in space $p$, $p_f$, $T$, $T_f$ $d_f$, $\jump{\bu}_\tau$, $\phi$  discrete solutions for both enthalpy-based (full lines) and entropy-based models (dash lines). The errors are computed using the fine mesh $m=3$ reference solutions with the same time stepping and model. }

    \label{fig:convergenceratesLiquid}
\end{figure}
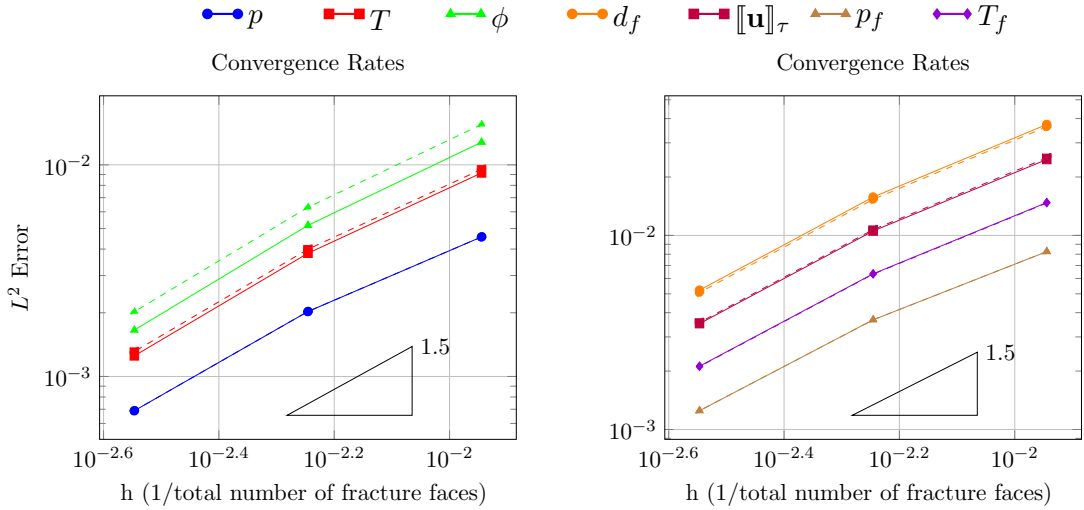

\subsubsection{Perfect gas case} \label{sec:testgas} 
We now consider the case of a perfect gas characterised by the following specific enthalpy and density 
\begin{equation*}
h(p,T) = C_f T, \quad \varrho(p,T) = M_f {p\over R T},
\end{equation*}
with gas specific heat capacity $C_f = 1000$ J.Kg$^{-1}$.K$^{-1}$, molar mass $M_f = 28.9645 ~10^{-3}$ Kg.mol$^{-1}$ and the perfect gas constant $R= 8.3149$ J.mol$^{-1}$.K$^{-1}$. 
The gas viscosity is fixed to $\eta = 1.72~10^{-5} \mathrm{~Pa} \mathrm{~s}$.

The time intervals for each of the three stages are given by $t^{(1)} = 100$ s, $t^{(2)} = 500$ s and $t_F = 3.005~10^5$ s. 
The time stepping is defined with a single time step during stage 1 since the coupling of the displacement with the pressure is very small during this stage due to the high gas compressibility (see the pressure solution during stage 1 in Figure \ref{fig:simuGaz} (a)). Stage 2 is initialised with a time step of $10$ s and the maximum time steps are set to $100$ s for stage 2 and $8000$ s for stage 3.

In Figure \ref{fig:convergence-rates-GAZ} we present the $L^2$-errors in time, vs.~the mesh size, for the spacial means of relevant variables. 
Both schemes provide roughly a convergence order of $1.5$. It can also be noticed that the enthalpy-based discrete solution is not very well captured by the coarser mesh, which explains the strong decrease of the errors between the first two meshes.
The significant differences between the convergence plots of the two models result from the high discrepancy between the enthalpy-based and entropy-based discrete solutions.

Figure \ref{fig:Mean-S-gaz} better illustrates the difference between the solutions of both models; we see in particular that the entropy-based model exhibits a temperature variation far below $-15$ K during stage 3 while it should physically be close to $15$ K as it is the case for the  enthalpy-based model. We demonstrate in Figure \ref{fig:Mean-S-Vnablap-gaz} that this discrepancy is due to the neglected terms $\bV_m\cdot \nabla p$ in the approximate entropy equations of the \eqref{model:entropy} model. Once these terms are added, the discrete solutions of both models fully match. 
This can be easily explained by comparing during stage 3 the order of magnitude of the neglected term $\mathbf{V}_m \cdot \nabla p$ with that of $\varrho_m \mathbf{V}_m \cdot \nabla h_m$. The ratio between both terms is of the order of roughly $10$ for the gas test case while it is roughly $0.1$ for the liquid test case. This is explained by the rather high pressure gradient combined with the low density and heat capacity in the gas case compared with the liquid case. We can conclude that the terms $\bV_m\cdot \nabla p$ cannot be neglected in the entropy-based model in the gas case with low specific density.

\begin{figure}[H]
    \centering
    \begin{tabular}{cc}
    \includegraphics[width=0.45\linewidth]{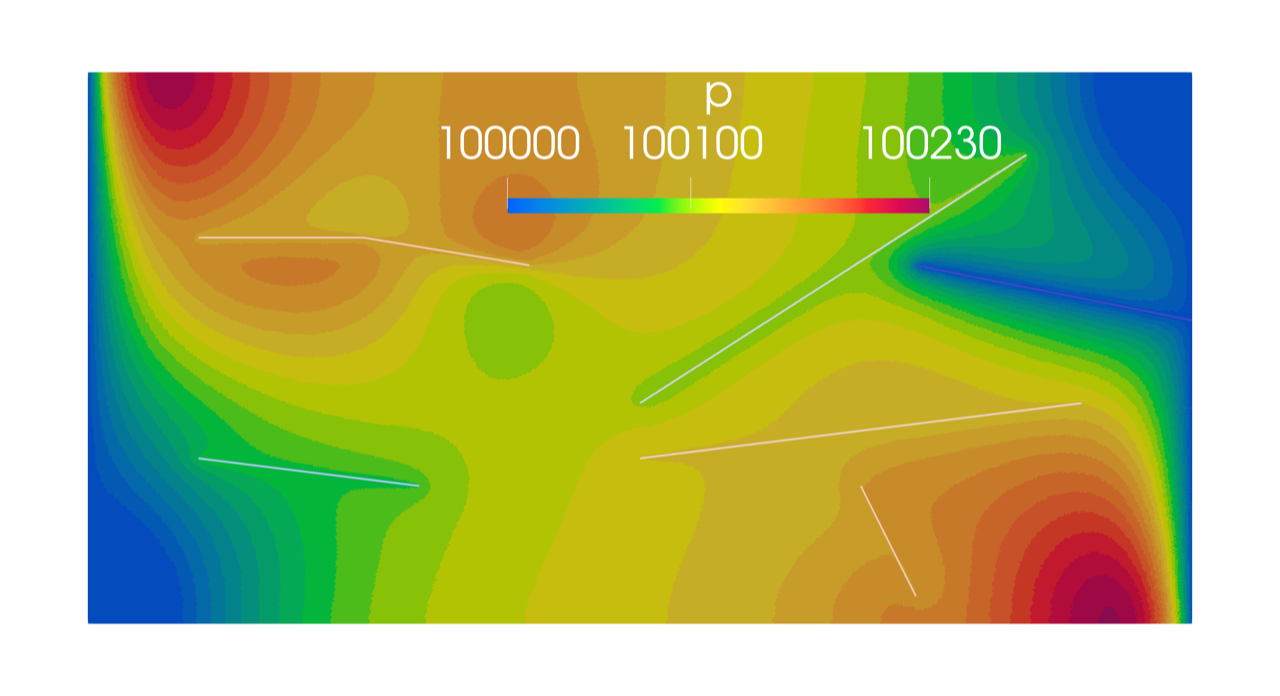}&
    \includegraphics[width=0.45\linewidth]{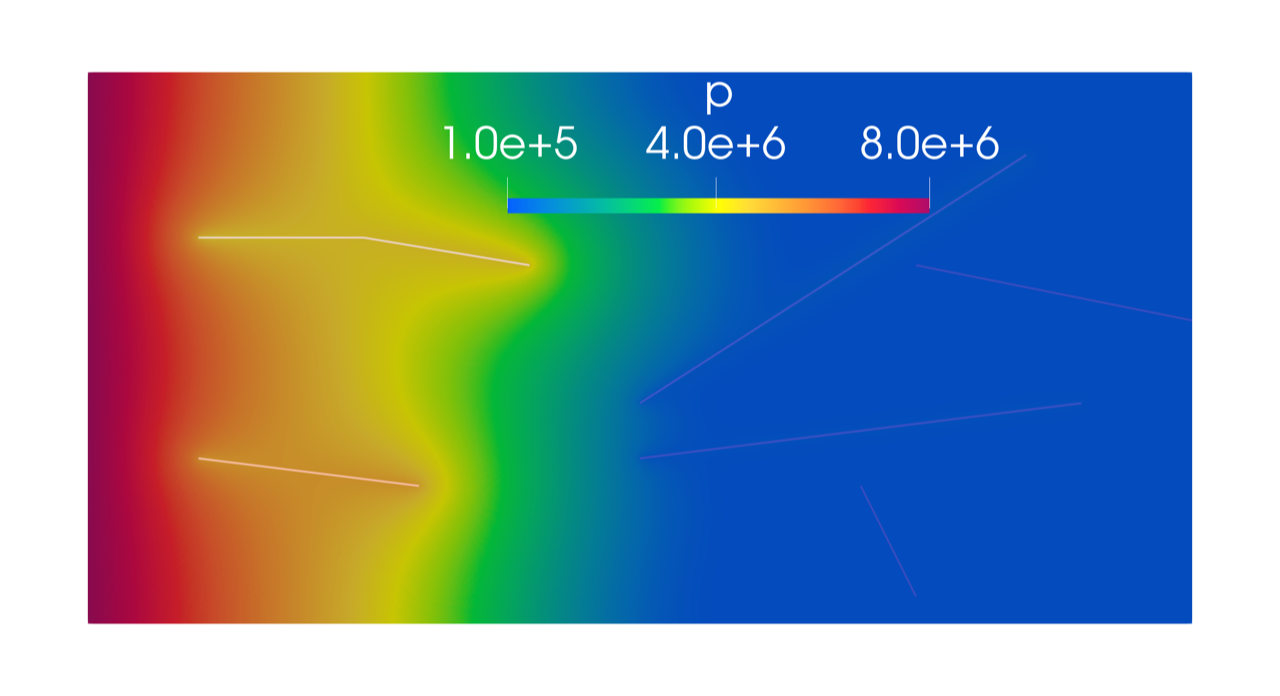}\\
      (a) Stage 1 & (b) Stage 2\\
    \multicolumn{2}{c}{
        \includegraphics[width=0.45\linewidth]{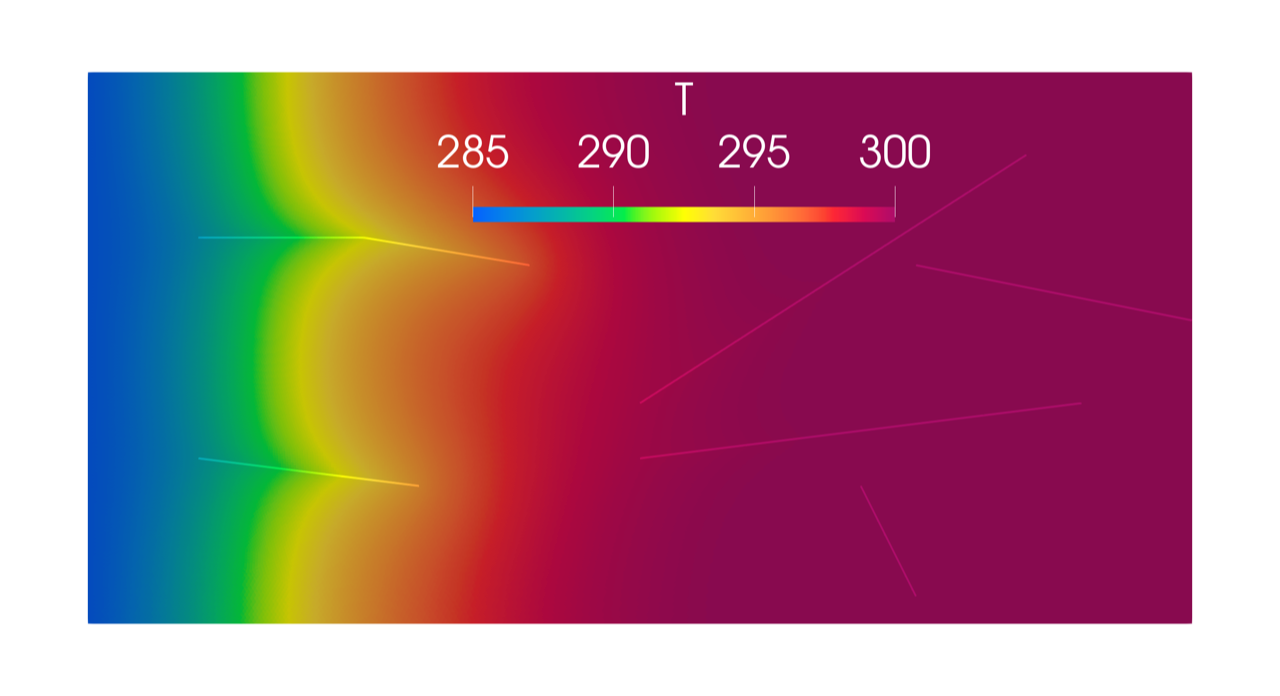}
        }\\
    \multicolumn{2}{c}{
        (c) Stage 3
      }
    \end{tabular}
    \caption{Discrete solution obtained with the enthalpy-based model on the finest mesh $m=3$: (a) pressure $p$ at the end of stage 1, (b) pressure $p$ at $t=170$ s during stage 2, (c) temperature $T$ at $t=42730$ s during stage 3.}
    \label{fig:simuGaz}
\end{figure}

\begin{figure}[H]
    \centering
    
    \begin{tikzpicture}[scale=0.8]
        \node at (0,0) { };
        
        \draw[blue, thick, mark=*, mark options={blue}] plot coordinates {(1,0.1) (1.5,0.1)}; \node[right] at (1.5,0) {$p$};
        \draw[red, thick, mark=square*, mark options={red}] plot coordinates {(3,0.1) (3.5,0.1)}; \node[right] at (3.5,0) {$T$};
        \draw[green, thick, mark=triangle*, mark options={green}] plot coordinates {(5,0.1) (5.5,0.1)}; \node[right] at (5.5,0) {$\phi$};
        
        \draw[orange, thick, mark=*, mark options={orange}] plot coordinates {(7,0.1) (7.5,0.1)}; \node[right] at (7.5,0) {$d_f$};
        \draw[purple, thick, mark=square*, mark options={purple}] plot coordinates {(9,0.1) (9.5,0.1)}; \node[right] at (9.5,0) {$\llbracket \bu \rrbracket_\tau$};
        \draw[brown, thick, mark=triangle*, mark options={brown}] plot coordinates {(11,0.1) (11.5,0.1)}; \node[right] at (11.5,0) {$p_f$};
        \draw[violet, thick, mark=diamond*, mark options={violet}] plot coordinates {(13,0.1) (13.5,0.1)}; \node[right] at (13.5,0) {$T_f$};
    \end{tikzpicture}
    
    \begin{minipage}{0.48\textwidth}
        \begin{tikzpicture}[scale=0.8]
            \begin{loglogaxis}[
                title={Convergence Rates},
                xlabel={$\mathrm{h} \text { (1/total number of fracture faces) }$},
                ylabel={$L^2$ Error},
                grid=major
            ]
            \addplot[blue, mark=*] table[x index=0,y index=1,col sep=space] {data/convh_m_H-g.txt};
            \addplot[red, mark=square*] table[x index=0,y index=2,col sep=space] {data/convh_m_H-g.txt};
            \addplot[green, mark=triangle*] table[x index=0,y index=3,col sep=space] {data/convh_m_H-g.txt};

             \addplot[dashed, blue, mark=*] table[x index=0,y index=1,col sep=space] {data/convh_m_S-g.txt};
            \addplot[dashed, red, mark=square*] table[x index=0,y index=2,col sep=space] {data/convh_m_S-g.txt};
            \addplot[dashed, green, mark=triangle*] table[x index=0,y index=3,col sep=space] {data/convh_m_S-g.txt};
            \logLogSlopeTriangle{0.75}{0.3}{0.07}{1.5}{black};
            \end{loglogaxis}
        \end{tikzpicture}
    \end{minipage}
    \hfill
    \begin{minipage}{0.48\textwidth}
        \begin{tikzpicture}[scale=0.8]
            \begin{loglogaxis}[
                title={Convergence Rates},
                xlabel={$\mathrm{h} \text { (1/total number of fracture faces) }$},
                ylabel={},
                grid=major
            ]
            \addplot[orange, mark=*] table[x index=0,y index=1,col sep=space] {data/convh_f_H-g.txt};
            \addplot[purple, mark=square*] table[x index=0,y index=2,col sep=space] {data/convh_f_H-g.txt};
            \addplot[brown, mark=triangle*] table[x index=0,y index=3,col sep=space] {data/convh_f_H-g.txt};
            \addplot[violet, mark=diamond*] table[x index=0,y index=4,col sep=space] {data/convh_f_H-g.txt};

            \addplot[dashed, orange, mark=*] table[x index=0,y index=1,col sep=space] {data/convh_f_S-g.txt};
            \addplot[dashed, purple, mark=square*] table[x index=0,y index=2,col sep=space] {data/convh_f_S-g.txt};
            \addplot[dashed, brown, mark=triangle*] table[x index=0,y index=3,col sep=space] {data/convh_f_S-g.txt};
            \addplot[dashed, violet, mark=diamond*] table[x index=0,y index=4,col sep=space] {data/convh_f_S-g.txt};
            \logLogSlopeTriangle{0.75}{0.3}{0.07}{1.5}{black};
            \end{loglogaxis}
        \end{tikzpicture}
    \end{minipage}
    \caption{Relative $L^2$ in time errors vs.~the mesh step $h$ (1/total number of fracture faces) of mean in space $p$, $p_f$, $T$, $T_f$ $d_f$, $\jump{\bu}_\tau$, $\phi$  discrete solutions for both enthalpy-based (full lines) and entropy-based models (dash lines). The errors are computed using the fine mesh $m=3$ reference solutions with the same time stepping and model.}
    \label{fig:convergence-rates-GAZ}
\end{figure}


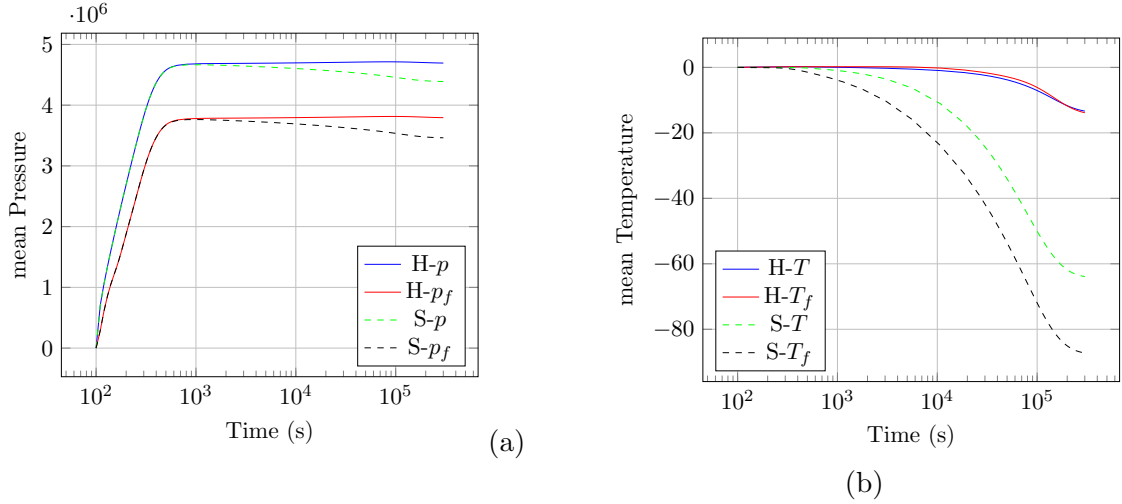
\begin{figure}[H]
    \centering
    
    \begin{minipage}{0.48\textwidth}
        \centering
        \begin{tikzpicture}[scale=0.8]
                   \begin{axis}[
            title={},
            xlabel={Time (s)},
            ylabel={mean Pressure},
            grid=major,
            xmode=log, 
           legend pos=south east
        ]
        
        \addplot+[no marks] table[x index=0, y index=2, col sep=space] {data/Tm_Pm_Phim-g.txt};
        \addlegendentry{H-$p$};
        
        \addplot+[no marks] table[x index=0, y index=2, col sep=space] {data/df_pf_sf_slip-g.txt};
        \addlegendentry{H-$p_f$};
        
        \addplot+[dashed, no marks, color=green] table[x index=0, y index=2, col sep=space] {data/Tm_Pm_Phim_S-g.txt};
        \addlegendentry{S-$p$};
        
        \addplot+[dashed, no marks] table[x index=0, y index=2, col sep=space] {data/df_pf_sf_slip_S-g.txt};
        \addlegendentry{S-$p_f$};
        
        \end{axis}
        \end{tikzpicture}
        (a) 
    \end{minipage}
   \hfill 
    \begin{minipage}{0.48\textwidth}
      \centering
      \vskip 1cm 
        \begin{tikzpicture}[scale=0.8]
               \begin{axis}[
            title={},
            xlabel={Time (s)},
            ylabel={mean Temperature},
            grid=major,
            xmode=log, 
            legend pos=south west 
        ]
        
        \addplot+[no marks] table[x index=0, y index=1, col sep=space] {data/Tm_Pm_Phim-g.txt};
        \addlegendentry{H-$T$};
        
        \addplot+[no marks] table[x index=0, y index=3, col sep=space] {data/df_pf_sf_slip-g.txt};
        \addlegendentry{H-$T_f$};
        
        \addplot+[dashed, no marks, color=green] table[x index=0, y index=1, col sep=space] {data/Tm_Pm_Phim_S-g.txt};
        \addlegendentry{S-$T$};
        
        \addplot+[dashed, no marks] table[x index=0, y index=3, col sep=space] {data/df_pf_sf_slip_S-g.txt};
        \addlegendentry{S-$T_f$};
        
        \end{axis}
        \end{tikzpicture}
        (b) 
    \end{minipage}

    \caption{
Matrix and fracture (a) mean over-pressures ($p-p^0$) and (b) mean over-temperatures ($T-T^0$) as functions of time for the solutions on the finest mesh $m=3$, for the discretisations of the enthalpy-based  (H) and entropy-based (S) models.}      
    \label{fig:Mean-S-gaz}
\end{figure}

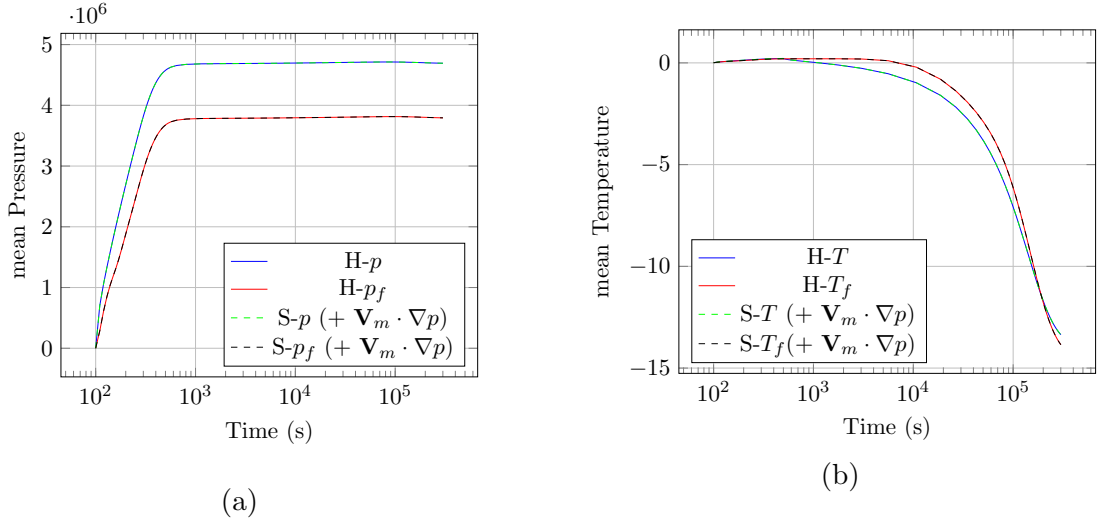
\begin{figure}[H]
    \centering
    
    \begin{minipage}{0.48\textwidth}
        \centering
        \begin{tikzpicture}[scale=0.8]
                   \begin{axis}[
            title={},
            xlabel={Time (s)},
            ylabel={mean Pressure},
            grid=major,
            xmode=log, 
           legend pos=south east
        ]
        
        \addplot+[no marks] table[x index=0, y index=2, col sep=space] {data/Tm_Pm_Phim-g.txt};
        \addlegendentry{H-$p$};
        
        \addplot+[no marks] table[x index=0, y index=2, col sep=space] {data/df_pf_sf_slip-g.txt};
        \addlegendentry{H-$p_f$};
        
        \addplot+[dashed, no marks, color=green] table[x index=0, y index=2, col sep=space] {data/Tm_Pm_Phim-g.txt};
        \addlegendentry{S-$p$ (+ $\bV_m\cdot \nabla p$)};
        
        \addplot+[dashed, no marks] table[x index=0, y index=2, col sep=space] {data/df_pf_sf_slip-g.txt};
        \addlegendentry{S-$p_f$ (+ $\bV_m\cdot \nabla p$)};
        
        \end{axis}
        \end{tikzpicture}
        \vskip 0.3cm      (a) 
    \end{minipage}
    \hfill
    \begin{minipage}{0.48\textwidth}
      \centering
        \begin{tikzpicture}[scale=0.8]
               \begin{axis}[
            title={},
            xlabel={Time (s)},
            ylabel={mean Temperature},
            grid=major,
            xmode=log, 
            legend pos=south west 
        ]
        
        \addplot+[no marks] table[x index=0, y index=1, col sep=space] {data/Tm_Pm_Phim-g.txt};
        \addlegendentry{H-$T$};
        
        \addplot+[no marks] table[x index=0, y index=3, col sep=space] {data/df_pf_sf_slip-g.txt};
        \addlegendentry{H-$T_f$};
        
        \addplot+[dashed, no marks, color=green] table[x index=0, y index=1, col sep=space] {data/Tm_Pm_Phim-g.txt};
        \addlegendentry{S-$T$ (+ $\bV_m\cdot \nabla p$)};
        
        \addplot+[dashed, no marks] table[x index=0, y index=3, col sep=space] {data/df_pf_sf_slip-g.txt};
        \addlegendentry{S-$T_f$(+ $\bV_m\cdot \nabla p$)};
        
        \end{axis}
        \end{tikzpicture}
        (b) 
    \end{minipage}

    \caption{
Matrix and fracture (a) mean over-pressures ($p-p^0$) and (b) mean over-temperatures ($T-T^0$) as functions of time for the solutions on the finest mesh $m=3$, for the discretisations of the enthalpy-based (H) model, and the entropy-based model with $\bV_m\cdot \nabla p$ correction (S ($+\bV_m\cdot \nabla p$)) discrete models.} 
    \label{fig:Mean-S-Vnablap-gaz}
\end{figure}

\subsubsection{Performances of the nonlinear solver}

Figures \ref{fig: Nk-liquid} and \ref{fig: Nk-Gaz} exhibit the total numbers of Newton iterations against time for the Thermo-Hydro and Mechanical models. The total number of time steps is $110$ for the liquid case in Figure 
\ref{fig: Nk-liquid} and  $87$ for the gas case in Figure \ref{fig: Nk-Gaz}. 
We can notice the robustness in both cases of the nonlinear solvers w.r.t.~the mesh size. Remarkably, in the liquid case, the entropy and enthalpy-based discrete models provide similar numbers of iterations through time.
In the gas case, the number of Thermo-Hydro Newton iterations gets moderately larger during stage 3 for the entropy-based than for the enthalpy-based discrete models  as a result of a much larger temperature variation for the entropy-based simulation. It has been checked that adding the $\bV_m\cdot \nabla p$ terms to the entropy-based discrete model gives back essentially the same Newton behavior as for the enthalpy-based model. 

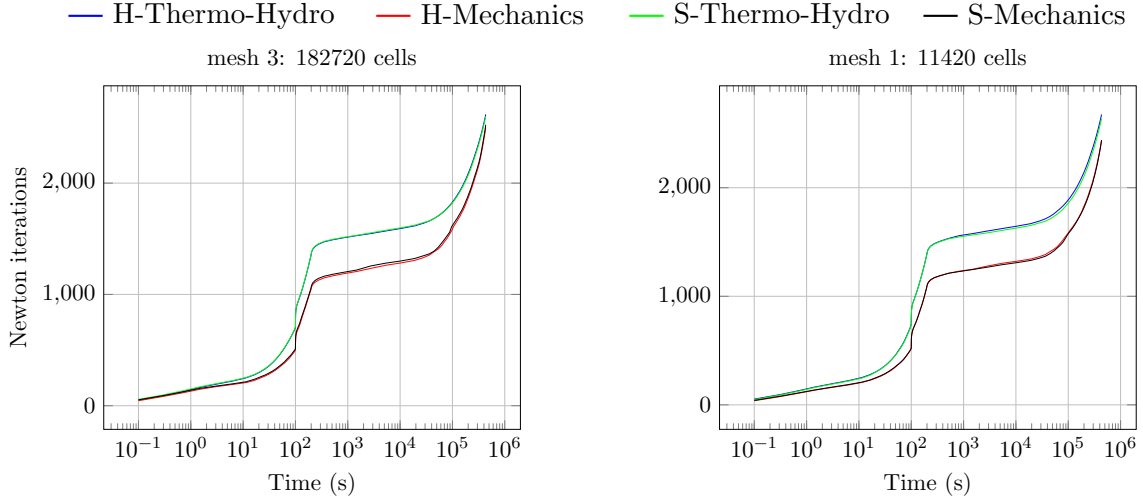
\begin{figure}[H]
    \centering

    \begin{tikzpicture}[scale=0.9]
        \node at (0,0) { };
        
        \draw[blue, thick] plot coordinates {(.5,0.1) (1,0.1)}; \node[right] at (1,0.1) {H-Thermo-Hydro};
      
        \draw[red, thick] plot coordinates {(5,0.1) (5.5,0.1)}; \node[right] at (5.5,0.13) {H-Mechanics };

        \draw[green, thick] plot coordinates {(8.7,0.1) (9.2,0.1)}; \node[right] at (9.2,0.1) {S-Thermo-Hydro};
        
        \draw[black, thick] plot coordinates {(13,0.1) (13.5,0.1)}; \node[right] at (13.5,0.13) {S-Mechanics};
    \end{tikzpicture}
    
    \begin{minipage}{0.45\textwidth}
        \centering
        \begin{tikzpicture}[scale=0.8]
                   \begin{axis}[
            title={mesh 3: 182720 cells},
            xlabel={Time (s)},
            ylabel={Newton iterations},
            grid=major,
            xmode=log, 
           legend pos=south east
        ]
        
        \addplot+[blue, no marks] table[x index=0, y index=1, col sep=space] {data/TotalNewtonFlowofT.txt};

        \addplot+[red, no marks] table[x index=0, y index=1, col sep=space] {data/TotalNewtonMecaofT.txt};

         \addplot+[green, no marks] table[x index=0, y index=1, col sep=space] {data/TotalNewtonFlowofT-S-3.txt};

        \addplot+[black, no marks] table[x index=0, y index=1, col sep=space] {data/TotalNewtonMecaofT-S-3.txt};

        \end{axis}
        \end{tikzpicture}
    \end{minipage}
    \hfill
    \begin{minipage}{0.45\textwidth}
        \centering
        \begin{tikzpicture}[scale=0.8]
               \begin{axis}[
            title={mesh 1: 11420 cells},
            xlabel={Time (s)},
            ylabel={},
            grid=major,
            xmode=log, 
            legend pos=south east 
        ]
        
        \addplot+[blue, no marks] table[x index=0, y index=1, col sep=space] {data/TotalNewtonFlowofT_mesh1.txt};

        \addplot+[red, no marks] table[x index=0, y index=1, col sep=space] {data/TotalNewtonMecaofT_mesh1.txt};

        \addplot+[green, no marks] table[x index=0, y index=1, col sep=space] {data/TotalNewtonFlowofT-S-1.txt};

        \addplot+[black, no marks] table[x index=0, y index=1, col sep=space] {data/TotalNewtonMecaofT-S-1.txt};

        \end{axis}
        \end{tikzpicture}
    \end{minipage}

    \caption{Total numbers of Newton iterations for the Thermo-Hydro and Mechanical models as a function of time, for the liquid case and both enthalpy-based (H) and entropy-based (S) schemes. (Left) mesh $m=3$, (right) mesh $m=1$, with a total number of $110$ time steps in all cases.}
    \label{fig: Nk-liquid}

\end{figure}

\begin{figure}[H]
    \centering

    \begin{tikzpicture}[scale=0.9]
        \node at (0,0) { };
        \draw[blue, thick] plot coordinates {(.5,0.1) (1,0.1)}; \node[right] at (1,0.1) {H-Thermo-Hydro};
      
        \draw[red, thick] plot coordinates {(5,0.1) (5.5,0.1)}; \node[right] at (5.5,0.13) {H-Mechanics };

        \draw[green, thick] plot coordinates {(8.7,0.1) (9.2,0.1)}; \node[right] at (9.2,0.1) {S-Thermo-Hydro};
        
        \draw[black, thick] plot coordinates {(13,0.1) (13.5,0.1)}; \node[right] at (13.5,0.13) {S-Mechanics};
    \end{tikzpicture}
    
    \begin{minipage}{0.45\textwidth}
        \centering
        \begin{tikzpicture}[scale=0.8]
                   \begin{axis}[
            title={mesh 3: 182720 cells},
            xlabel={Time (s)},
            ylabel={Newton iterations},
            grid=major,
            xmode=log, 
           legend pos=south east
        ]
        
        \addplot+[blue, no marks] table[x index=0, y index=1, col sep=space] {data/TotalNewtonFlowofT-H-3-g.txt};

        \addplot+[red, no marks] table[x index=0, y index=1, col sep=space] {data/TotalNewtonMecaofT-H-3-g.val.txt};

        \addplot+[green, no marks] table[x index=0, y index=1, col sep=space] {data/TotalNewtonFlowofT-S-3-g.txt};

        \addplot+[black, no marks] table[x index=0, y index=1, col sep=space] {data/TotalNewtonMecaofT-S-3-g.val.txt};

        \end{axis}
        \end{tikzpicture}
    \end{minipage}
    \hfill
    \begin{minipage}{0.45\textwidth}
        \centering
        \begin{tikzpicture}[scale=0.8]
               \begin{axis}[
            title={mesh 1: 11420 cells},
            xlabel={Time (s)},
            ylabel={},
            grid=major,
            xmode=log, 
            legend pos=south east 
        ]
        
        \addplot+[blue, no marks] table[x index=0, y index=1, col sep=space] {data/TotalNewtonFlowofT-H-1-g.txt};

        \addplot+[red, no marks] table[x index=0, y index=1, col sep=space] {data/TotalNewtonMecaofT-H-1-g.val.txt};

        \addplot+[green, no marks] table[x index=0, y index=1, col sep=space] {data/TotalNewtonFlowofT-S-1-g.txt};

        \addplot+[black, no marks] table[x index=0, y index=1, col sep=space] {data/TotalNewtonMecaofT-S-1-g.val.txt};

        \end{axis}
        \end{tikzpicture}
    \end{minipage}

    \caption{Total number of Newton iterations for the Thermo-Hydro and Mechanical models as a function of time, for the gas case and both enthalpy-based (H) and entropy-based (S) schemes. (Left) mesh $m=3$, (right) mesh $m=1$, with a total number of $87$ time steps in all cases.}
   \label{fig: Nk-Gaz}

\end{figure}
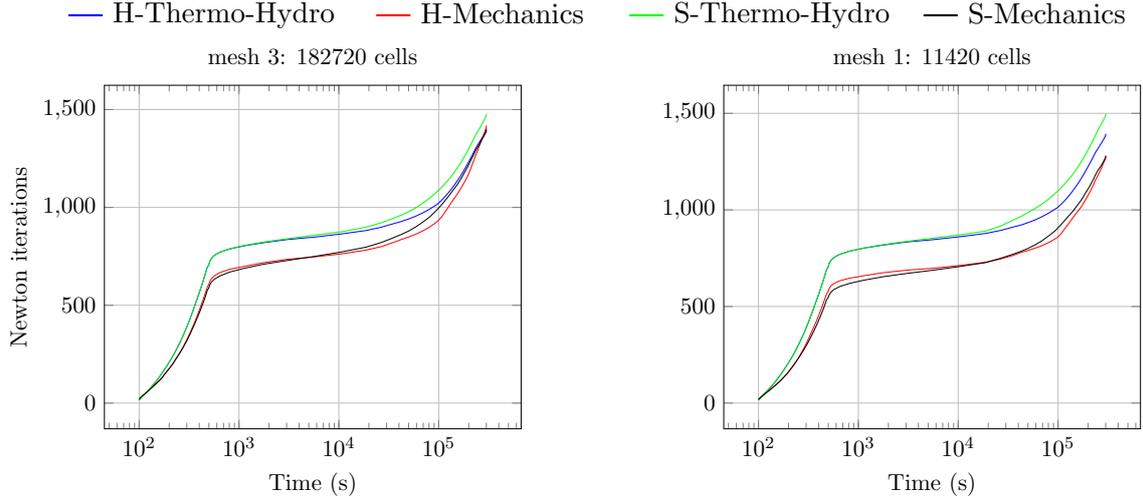

\section{Conclusion}

This work focuses on discretisations of mixed-dimensional THM models preserving energy estimates for a general single phase fluid thermodynamical model. Our approach uses a Finite Volume scheme  for the mass and energy equations with a possible upwinding of the convection terms in order to account for convection dominated regimes. It is combined with a mixed Finite Element discretisation of the contact-mechanics with face-wise constant Lagrange multipliers, keeping  the dissipative property of the contact terms at matrix fracture interfaces. 
Two formulations of the energy equation are considered and compared numerically. It is built either directly from the energy conservation or obtained from an approximate entropy balance equation based on a small Darcy velocity and small temperature variation assumptions. The Finite Volume discretisation of the entropy-based model, and in particular of the non-conservative convection terms, is carefully designed in order to preserve the link between both formulations, which in turns guarantees that it satisfies an energy estimate.
Both discrete models are assessed and compared in terms of convergence, accuracy and robustness on 2D test cases including a convective dominated regime, and either a weakly compressible liquid or highly compressible gas. It is shown that both approaches provides similar results in terms of spatial convergence and robustness of the nonlinear solver. On the other hand, in the gas case, for low specific density and high pressure gradient, the terms $\bV\cdot \nabla p$ that are typically neglected in the entropy balance approach must be accounted for in order to provide the physical solution.  

\vskip 0.5cm 
\noindent{\bf Acknowledgements}: the authors are  grateful to Andra and BRGM for partially funding this work and to Laurence Beaude, Marc Leconte, Simon Lopez, Antoine Pasteau and Farid Smai for fruitful discussions during the elaboration of this work. 

J. Droniou would like to acknowledge a partial funding by the European Union (ERC Synergy, NEMESIS, project number 101115663).
Views and opinions expressed are however those of the authors only and do not necessarily reflect those of the European Union or the European Research Council Executive Agency. Neither the European Union nor the granting authority can be held responsible for them.

%
%

\bibliographystyle{plain}
\bibliography{ThermoPoroMecanique.bib}
\end{document}